\theoremstyle{plain}
\newtheorem{theorem}{Theorem}[section]
\newtheorem{prop}{Proposition}[section]
\newtheorem{lem}{Lemma}[section]
\theoremstyle{definition}
\newtheorem{mydef}{Definition}[section]
\numberwithin{equation}{section}
\newcommand{\indicator}[1]{\mathbbm{1}_{ {#1} }}
\DeclareMathOperator*{\R}{\mathbb{R}}
\title[Weighted Local Estimates for Singular Integrals]{Weighted Local Estimates for Singular Integral Operators}
\author[Poelhuis and Torchinsky]{Jonathan Poelhuis and Alberto Torchinsky}
\begin{document}

\maketitle

\setcounter{section}{1}
\section*{Introduction}

An underlying principle of the Calder\'{o}n-Zygmund theory, first expressed  by Cotlar for the Hilbert transform \cite{MC},
is that the Hardy-Littlewood maximal function controls the Calder\'{o}n-Zygmund singular integral operators.
 Coifman    formulated this principle in the weighted setting as follows. We say that
 a  continuous function $\Phi$ satisfies condition
 $C$ if it is   increasing on $[0, \infty)$ with  $\Phi(0) = 0$ and $\Phi(2t) \le c\,\Phi(t)$, all $t>0$.
 Then, if $\Phi$ satisfies condition $C$,  $w$ is an $A_\infty$   weight,
 and $T$ is a Calder\'on-Zygmund singular integral operator,
\begin{equation}  \int_{\R^n} \Phi     ( |Tf(x)|  )\,w(x)\,dx\le c \int_{\R^n} \Phi    ( Mf(x)     )\,w(x)\,dx\,,
\end{equation}
where $Mf$ denotes the Hardy-Littlewood maximal function of $f$ \cite{RRC}.
In the setting of the $A_p$   weights, extending the result of  Hunt, Muckenhoupt and Wheeden
 for the Hilbert transform \cite{HMW},
Coifman  and Fefferman  proved that
\begin{equation}\int_{\R^n} |Tf(x) |^p\, w(x)\,dx\le c \int_{\R^n} |f(x)|^p \, w(x)\,dx
\end{equation}
 provided that $1<p<\infty$ and $w\in A_p$,  \cite{CoifmanFefferman}.
The proof of both of these results makes use of the good-$\lambda$ inequalities of Burkholder and Gundy \cite{GoodLambda}.

Along similar lines, and with the additional purpose of considering vector-valued singular integrals,
C\'{o}rdoba and Fefferman
proved that for a weight $w$,  i.e., a nonnegative locally integrable function $w$, and $1<p<\infty$,
\begin{equation} \int_{\R^n} |Tf(x) |^p\, w(x)\,dx\le c \int_{\R^n} |f(x)|^p M_r w(x)\, dx\,,
\end{equation}
 where  $M_r w(x)=M(w^r)(x)^{1/r}$ denotes the Hardy-Littlewood maximal function of order $r$, $1<r<\infty$,  \cite{CordobaFefferman}.
Their proof is based on the pointwise inequality
\begin{equation}  M^\sharp(Tf)(x)\le c \,M_r f(x)\,,
\end{equation}
 where $M^{\sharp}$ denotes the sharp maximal function. Since this inequality does not hold for $r=1$ and $T$ the Hilbert transform,
it is of interest that for Calder\'on-Zygmund singular integral operators the pointwise control
can be expressed
in terms of the local sharp maximal function $ M^\sharp_{0,s}$ by means of an  estimate
proved by Jawerth and Torchinsky \cite{JawerthTorchinsky} that preserves the weak-type information, to wit,
\begin{equation}
 M^\sharp_{0,s}(Tf)(x)\le c\, Mf(x)\,.
\end{equation}

In the first part of this paper we revisit this  inequality, recast it in local terms, and
establish local weighted  estimates for Calder\'on-Zygmund singular integral operators.
In particular, our results include that  these operators   satisfy
\begin{equation*}
M^{\sharp}_{0,s,Q_0}  (Tf )(x) \le c \sup_{x \in Q, Q \subset Q_0} \inf_{y \in Q} M f(y)\,,
\end{equation*}
where $M^{\sharp}_{0,s,Q_0}$  denotes the local sharp maximal function restricted to the cube $Q_0$.

Combined with the inequality
\begin{equation*}
\int_{Q_0}\Phi  (|f(x) - m_f(t,Q_0)|  )\, w(x)\,dx \le  c \int_{Q_0}\Phi ( M^{\sharp}_{0,s,Q_0}f(x)  )\,v(x)\,dx\,,
\end{equation*}
where $\Phi$ is any function satisfying condition $C$,
 $ m_{f}(t,Q_0)$ denotes the (maximal) median of $ f$ with parameter $t$, and
 $v=w$ when $w\in A_\infty$ and $v=M_r w$ when $w$ is an arbitrary weight,
 it readily follows that
\begin{equation}
\int_{Q_0} \Phi  ( |Tf(x) - m_{Tf}(t,Q_0) | ) \,w(x)\,dx \le c \int_{Q_0}   \Phi  ( M  f(x)  )\, v(x)\, dx\,,
\end{equation}
  where $c$ is independent of the cube $Q_0$ and $f$.

 Furthermore, if
$\lim_{Q_0\to\R^n} m_{Tf}(t,Q_0)=0$,  then
\begin{equation}
\int_{\R^n} \Phi   ( |Tf(x) | ) \,w(x)\,dx \le c \int_{\R^n}   \Phi ( M  f(x) ) \, v(x)\, dx\,.
\end{equation}
We also prove
 that (1.6) and (1.7) hold  for appropriate non-$A_\infty$ weights $w$ and $ v$.

Estimates such as (1.6) represent a local version
 of  (1.1)  and (1.3) and  fail for
singular integrals  when $Mf$ is replaced by $|f|$ on the
right-hand side, even with $\lambda Q_0$ in place of  $Q_0$ for any $\lambda >1$ there.

As for the integral inequality (1.7),
it includes all three estimates, (1.1), (1.2), and (1.3), and readily implies
 the following one.   A    function $\Phi$ that satisfies condition $C$ which is convex and such that
 $\Phi(t)\to \infty$ as $t \to \infty$, or, more generally,
   such that $\Phi(t)/t\to\infty$ as $t \to\infty$, is called a Young function.
 Let $w\in A_\infty$ and $\Phi,\Psi$ be Young functions such that
\[\int_0^t \frac{\Phi(s)}{s^2}\,ds\le c\,\frac{\Psi(t)}{t}\,,\quad t>0\,,\]
and $\Phi(t)/t^q$ decreases for some $q>1$. Then,
\begin{equation}
\int_{\R^n} \Phi   ( |Tf(x) |   ) \,w(x)\,dx \le c \int_{\R^n}   \Psi   ( | f(x) |  ) \, Mw(x)\, dx\,.
\end{equation}

(1.6) also allows   expressing the limiting cases of (1.4). Indeed, we have
\[  M^\sharp   (|Tf|^p   )(x)^{1/p}\le M^\sharp_p     (Tf   )(x)\le c_p\, Mf(x)\,,\quad 0<p<1\,,\]
with
\[  M^\sharp_p   (g )(x) =\sup_{x\in Q} \inf_c \Big(\frac1{|Q|}\int_Q |g(y)-c|^p\,dy\Big)^{1/p}\,,
\]
 and  $c_p\to\infty$ as $p\to 1$, and
\begin{equation} M^\sharp   (Tf   )(x)\le c\, M_{L\log L}f(x)\,.
\end{equation}

Now, by (1.9), if  $\Phi(t)/t^p$ increases and $\Phi(t)/t^q$ decreases for some $1<p<q<\infty$,
and $w$ is an arbitrary weight,
by  Theorem 1.7 in \cite{Perez1990} it readily follows that
\begin{equation}
\int_{\R^n} \Phi   ( M^\sharp   (Tf   )(x)  )\,w(x)\,dx\le c\,\int_{\R^n} \Phi   (| f(x)|   )\,Mw(x)\,dx\,.
\end{equation}
Note that   the weaker inequality with $M^{\sharp}_{0,s}$ in place of $M^{\sharp}$ on the left-hand side
above holds, by  (1.5) and the Fefferman-Stein maximal inequality, for   an even wider class of $\Phi$.
Now, these inequalities are  of interest because
they  do not hold for an arbitrary weight $w$ for all singular integral operators $T$ with
$ |Tf(x)|$ in place of  $ M^\sharp(Tf)(x)$ on the left-hand side of (1.10).
This observation
follows from  Theorem 1.1 in \cite{Perez1994}:
if  $T$  is a singular integral operator and  $1 < p < \infty$,
there exists a constant $c$ such that for each weight $w$,
\begin{equation}
 \int_{\R^n} |Tf(x)|^p\, w(x)\,dx \le   c
\int_{\R^n} |f(x)|^p\, M^{[p\,]+1}\,w(x)\,dx\,.
\end{equation}
Furthermore, the result is sharp since it does not
hold for $M^{[p\,]}$ in place of $ M^{[p\,]+1}$ in (1.11).

And, as illustrated below in the case
of singular integral operators of Dini type, or with kernels satisfying H\"ormander-type conditions,
and integral operators with homogeneous kernels,
our methods apply to other operators as well.

We then take a closer look at the integral inequalities (1.2) and (1.11) which, being
specific to   $p$,  are of a different nature. The
question of determining weights $(w,v)$ so that Calder\'{o}n-Zygmund singular integral operators map
$L^p_v$ continuously into $L^p_w$  was pioneered by Muckenhoupt and Wheeden \cite{MuckWhee}
for the Hilbert transform and continues to attract considerable attention. For weights that satisfy
some additional property, such as being  radial,  interesting results are
proved and referenced, for instance,  in \cite{GuliNaz2006}. Interestingly,
Reguera and Scurry  have shown that there is no
a priori relationship between the Hilbert transform  and the Hardy-Littlewood  maximal
function in the two-weight setting \cite{ReSc}.

To deal with singular integral operators
in the two-weight context we build on the ideas developed so far: we consider a local median decomposition and the
  weights naturally  associated to them.
The first scenario corresponds to
 the $W_p$ classes of Fujii \cite{Fujii1991}, where estimates with the flavor of extrapolation are
 obtained.
And, a further refined local median decomposition, like the one used by Lerner
in his  proof of the $A_2$ conjucture \cite{La}, coupled with  weights satisfying  the Orlicz
``bump'' condition introduced by P\'erez \cite{Perez1994} and used by Lerner \cite{Lerner2010},
gives the estimate for Calder\'on-Zygmund singular integrals, including
those of Dini type, or with kernels satisfying a H\"ormander-type condition,
from $L^p_v(\R^n)$ into $L^q_w(\R^n)$ for $1<p\le q<\infty$.

Finally, the local estimates   are well suited to
the generalized weighted Orlicz-Morrey spaces $\mathcal M^{\Phi, \phi}_w$
and generalized weighted Orlicz-Campana-to spaces  $\mathcal L^{\Phi, \phi}_w$, defined in Section 7.
Indeed, if  $T$ is a Calder\'on-Zygmund singular integral operator, from (1.6) it readily follows that for
a Young function $\Phi$, $w\in A_\infty$, and every appropriate $\phi$,
\begin{equation}
 \|Tf\|_{\mathcal L^{\Phi, \phi}_w}\le c\,\|M f\|_{\mathcal M^{\Phi, \phi}_w}\,.
 \end{equation}

Moreover, suppose that  $S$ is  a  sublinear operator that satisfies
\begin{equation} \int_{\R^n} \Phi   (|Sf(y)|  )\,w(y)\,dy\le c\int_{\R^n} \Phi (|f(y)| )\,w(y)\,dy
\end{equation}
 and  such that for any cube $Q$, if $x\in Q$ and  ${\text{supp}}(f)\subset \R^n\setminus 2Q$,
then
\begin{equation}
\ |Sf(x)|\le \int_{\R^n} \frac{|f(y)|}{|x-y|^n}\,dy\,.
\end{equation}
Then, if  $u_\Phi$ denotes the upper index  of
 $L^{\Phi}$,  $w\in A_p$ where $p=1/u_\Phi$, and for all $x\in\R^n$ and $l>0$, $\phi(x,t)$ and $\psi(x,l)$ satisfy
\[  {\psi(x,l)}\, \int_{l}^\infty \frac1{\phi(x,t)}\, \frac{dt}{t}\le c\,,
\]
we have that
\[ \|Sf\|_{\mathcal M^{\Phi, \psi}_w }\le c\,\|f\|_{\mathcal M^{\Phi, \phi}_w}\,.
\]
This result applies to the Hardy-Littlewood maximal function, Calde-r\'on-Zygmund singular integrals,
and  other operators \cite{GuliNaz2006}.

The paper is organized as follows. The first two sections contain the
essential ingredients in what follows. The
local median decomposition of an arbitrary measurable function
is given in Section 2 and the local control of a weighted mean of a function by the weighted mean of a local maximal function  is
done in Section 3. In Section 4 we prove a local version of the estimate
 $M^{\sharp}_{0,s}(Tf)(x) \le  c\,Mf(x)$ for Calder\'{o}n-Zygmund singular integral operators
 and recast similar estimates with $T$ replaced by singular integral operators with kernels satisfying H\"ormander-type conditions
or integral operators with homogeneous kernels and $M$ by an appropriate maximal function $M_T$.
In Section 5 we  use these estimates   to express the local integral control of
$Tf$ in terms of $M_Tf$;  the case  $M_T=M_r$ is done in some detail. In Section 6 we
use variants of the local median decomposition obtained in Section 2 to prove two-weight, $L^p_v$-$L^q_w$ estimates
for singular integral operators for $1<p\le q<\infty$. And finally, in Section 7 we consider the Orlicz-Morrey and Orlicz-Campanato spaces.

Some closely related topics are not addressed here. Because we
concentrate on integral inequalities,   weak-type inequalities are not considered.
Neither are commutator estimates, which can, for instance, be treated as in \cite{AlvarezPerez}, nor homogeneous spaces,
the foundation for which has been laid in \cite{ShiTor, TorchinskyStromberg}  and \cite{Anderson}.
And, for the various definitions or properties that the reader may find  unfamiliar,
there are many treatises in the area which may be helpful, including \cite{TorchinskyStromberg, TOR}.

\section{Local Median Decomposition}
The  decomposition of a measurable function
   presented here was first considered in terms of averages by Garnett and Jones \cite{GJ}
and suggested in terms of medians by Fujii \cite{Fujii1991}. It
complements Lerner's ``local mean oscillation'' decomposition \cite{Lerner2010, LernerSummary},
which corresponds to the case $t=1/2$, $s=1/4$ in Theorem 2.1.
Although the bound below is larger than his,
it holds for  arbitrarily small values of $s$,
which are necessary for the applications of interest to us. Also, the proof relies on medians and is somewhat more geometric.

In what follows, we adopt the notations of \cite{MedContOsc} and \cite{Stromberg}. In particular, all cubes have sides parallel to the axes. Also, for a cube $Q \subset \mathbb{R}^n$ and $0 < t < 1$, we say that
\[ m_f(t,Q) = \sup\{M : |\{y \in Q: f(y) < M\}| \le t|Q|\}\]
 is the (maximal) median of $f$ over $Q$ with parameter $t$. For a cube $Q_0 \subset \mathbb{R}^n$ and $0 < s \le 1/2$,
 the local sharp maximal function restricted to $Q_0$ of a measurable function $f$ at $x \in Q_0$ is
\begin{align*}
M^{\sharp}_{0,s,Q_0}&f(x)\\
& = \sup_{ {x \in Q, Q \subset Q_0}}\inf_c \; \inf   \{\alpha \ge 0: |\{y \in Q: |f(y) - c| > \alpha\}| < s|Q|   \}\,,
\end{align*}
and the local sharp maximal function of a measurable function $f$ at $x \in \R^n$ is
\[M^{\sharp}_{0,s}f(x) = \sup_{ {x \in Q}}\inf_c \; \inf   \{\alpha \ge 0: |\{y \in Q: |f(y) - c| > \alpha\}| < s|Q|  \}\,.\]

Additionally,  we consider the maximal function $m^{t,\Delta}_{Q_0}$ defined as follows.
Let $\mathcal{D}$ be the family of dyadic cubes in $\mathbb{R}^n$. For a cube $Q \subset \mathbb{R}^n$, let $\mathcal{D}(Q)$ denote the family of dyadic subcubes relative to $Q$; that is to say, those formed by repeated dyadic subdivision of
 $Q$ into $2^n$ congruent subcubes.  Then
\[m^{t,\Delta}_{Q_0} f(x) = \sup_{{x \in Q, Q \in \mathcal{D}(Q_0)}} |m_f(t,Q)|\,.\]

A related non-dyadic maximal function was introduced by A. P. Calder\'{o}n  in order to exploit cancellation
to obtain estimates for singular integrals
in terms of maximal functions \cite{Calderon1972}.

Finally,   $\widehat{Q}$ denotes the dyadic parent of a cube $Q$.

\begin{theorem}
Let $f$ be a measurable function on a fixed cube $Q_0 \subset \mathbb{R}^n$, $0 < s < 1/2$, and $1/2 \leq t < 1-s$.
Then there exists a (possibly empty) collection of subcubes $\{Q^v_j\} \subset \mathcal{D}(Q_0)$ and a family of collections
of indices $\{I^v_2\}_v$ such that
\begin{enumerate}
\item[\rm (i)]
for a.e. $\! x \in Q_0$,
\[|f(x) - m_f(t,Q_0)| \leq 4M^{\sharp}_{0,s,Q_0}f(x) + \sum_{v=1}^{\infty}\sum_{j \in I^v_2} a^v_j\indicator{Q^v_j}(x) \,,\]
where
\begin{align}
a^v_j &\le 10n\inf_{y \in Q^v_j}M^{\sharp}_{0,s,\widehat{Q^v_j}}f(y) + 2 \inf_{y \in Q^v_j}M^{\sharp}_{0,s,Q^v_j}f(y)\notag
\\
&\le (10n+2) \inf_{y \in Q^v_j}M^{\sharp}_{0,s,\widehat{Q^v_j}}f(y) \,;
\end{align}
\item[\rm (ii)]
for fixed $v$, the $\{Q^v_j\}_j$ are nonoverlapping;
\item[\rm(iii)]
if\, $\Omega^v = \bigcup_j Q^v_j$, then $\Omega^{v+1} \subset \Omega^v$; and,
\item[\rm (iv)]
for all $j$, $|\Omega^{v+1} \cap Q^v_j| \le  ({s}/{(1-t)}) \, |Q^v_j|$ .
\end{enumerate}
\end{theorem}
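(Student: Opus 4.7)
The plan is to construct $\{Q^v_j\}$ by an iterative Calder\'on--Zygmund-type stopping, in the spirit of the Garnett--Jones \cite{GJ} and Lerner \cite{Lerner2010} local-oscillation decompositions, adapted to general $(s,t)$-medians. Two preparatory lemmas are central. \emph{(a) Median-constant lemma:} if $|\{y\in Q:|f(y)-c|>\alpha\}|<s'|Q|$ with $s'\le\min(t,1-t)=1-t$, then $|m_f(t,Q)-c|\le\alpha$; applied with $s'=s$ and (nearly) optimal $(c_Q,\omega(Q))$ realizing $\inf_c\inf\{\alpha:|\{|f-c|>\alpha\}|<s|Q|\}$, this gives $|m_f(t,Q)-c_Q|\le\omega(Q)\le M^{\sharp}_{0,s,Q_0}f(x)$ for every $x\in Q\subset Q_0$. \emph{(b) Interquantile lemma:} for cubes $P\subset R$ with $a:=|R|/|P|\in(s/(1-t),(1-s)/t)$, $m_f(t,R)$ is squeezed between the $(1-(1-t)a)$- and $(at)$-quantiles of $f$ on $P$, both of which lie in $(s,1-s)$ by the range of $a$ and are therefore within $\omega(P)$ of $c_P$ by (a); hence $|m_f(t,P)-m_f(t,R)|\le 2\omega(P)$.

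Now the iterative construction: set $\{Q^0\}=\{Q_0\}$; given any cube $Q$ from level $v-1$, select the maximal dyadic $Q'\subsetneq Q$ for which $|\{y\in Q':|f(y)-c_Q|>\omega(Q)\}|\ge(1-t)|Q'|$. These form the level-$v$ family $\{Q^v_j\}_{j\in I^v_2}$ with union $\Omega^v$. Properties (ii) and (iii) are immediate from dyadic maximality and the nested selection; for (iv), summing the density $\ge 1-t$ across disjoint selected cubes yields
\[|\Omega^v\cap Q|\le\tfrac{1}{1-t}\bigl|\{y\in Q:|f-c_Q|>\omega(Q)\}\bigr|<\tfrac{s}{1-t}|Q|.\]
For (i), property (iv) forces $|\bigcap_v\Omega^v|=0$, so for a.e.\ $x\in Q_0$ there is a deepest $v(x)$ with $x\in\Omega^{v(x)}\setminus\Omega^{v(x)+1}$; writing $Q^l(x)$ for the level-$l$ ancestor, Lebesgue differentiation applied to the failure of the stopping condition at $x$ gives $|f(x)-c_{Q^{v(x)}(x)}|\le\omega(Q^{v(x)}(x))$, and combining with (a) yields $|f(x)-m_f(t,Q^{v(x)}(x))|\le 2\omega\le 2M^{\sharp}_{0,s,Q_0}f(x)$. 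Telescoping $m_f(t,Q^{v(x)}(x))-m_f(t,Q_0)$ over ancestor levels and applying the triangle inequality produces the decomposition (i), the residual $4M^{\sharp}_{0,s,Q_0}f(x)$ absorbing the $2M^{\sharp}$ above with slack.

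The principal obstacle is the jump bound on $a^v_j=|m_f(t,Q^v_j)-m_f(t,Q^{v-1})|$. Splitting at the dyadic parent, $a^v_j\le|m_f(t,Q^v_j)-m_f(t,\widehat{Q^v_j})|+|m_f(t,\widehat{Q^v_j})-m_f(t,Q^{v-1})|$; the second piece is $\le 2\omega(\widehat{Q^v_j})\le 2M^{\sharp}_{0,s,\widehat{Q^v_j}}f(y^*)$ (for any $y^*\in Q^v_j$) from the non-selection of $\widehat{Q^v_j}$ together with (a). The first piece cannot be handled by a single application of (b), because the measure ratio $2^n$ between $\widehat{Q^v_j}$ and $Q^v_j$ can exceed $(1-s)/t$ in our range. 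The plan is to interpose a chain of cubes $Q^v_j=R_0\subset R_1\subset\cdots\subset R_k=\widehat{Q^v_j}$, each containing $Q^v_j$ and contained in $\widehat{Q^v_j}$, with successive measure ratios inside the range of (b). The first link contributes $|m_f(t,R_0)-m_f(t,R_1)|\le 2\omega(Q^v_j)\le 2M^{\sharp}_{0,s,Q^v_j}f(y^*)$, accounting for the sharper $2\inf M^{\sharp}_{0,s,Q^v_j}f$ term (using that the local sharp maximal function can be restricted to $Q^v_j$ itself at the base); the remaining $k-1$ links and the $\widehat{Q^v_j}$-to-$Q^{v-1}$ jump sum to $\le 2k\,M^{\sharp}_{0,s,\widehat{Q^v_j}}f(y^*)$, giving the $10n$ factor once $k\le 5n$. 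The main technical work is to position the intermediate $R_i$ inside $\widehat{Q^v_j}$—with $Q^v_j$ occupying one corner—so that the chain has length $k\lesssim n$ uniformly in $s$ and $t$.
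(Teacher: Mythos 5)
Your overall architecture — an iterated dyadic stopping time, the median-vs-optimal-constant lemma (a), and a telescoping of medians over the stopping hierarchy — is the skeleton the paper also follows, and parts (ii)--(iv) plus the $4M^\sharp_{0,s,Q_0}f$ term do come out essentially as you sketch. The difficulty is concentrated exactly where you flag ``the main technical work,'' and it is not a matter of cleverly placing the intermediate cubes. Your interquantile lemma (b) requires each link $R_i\subsetneq R_{i+1}$ to satisfy $|R_{i+1}|/|R_i| < (1-s)/t$, so that the $(at)$-quantile on $R_i$ stays below $1-s$ and lemma (a) can be applied to it. In the admissible range $1/2\le t<1-s$ the maximal per-step ratio $(1-s)/t$ lies in $(1,2]$ and tends to $1$ as $t\to 1-s$. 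Since the chain from $Q^v_j$ to $\widehat{Q^v_j}$ must accumulate a total ratio $2^n$, any chain built from (b) has length $k\ge n\log 2/\log\bigl((1-s)/t\bigr)$, which is unbounded as $t\uparrow 1-s$; no choice of intermediate $R_i$ gives $k\lesssim n$ uniformly, because the obstruction is the shrinking admissible step, not the geometry. The paper avoids this entirely by invoking Lemma~4.3 of \cite{MedContOsc} as a black box to obtain $|m_{f^0}(t,Q^1_j)-m_{f^0}(t,\widehat{Q^1_j})|\le 10n\inf_{y\in Q^1_j}M^\sharp_{0,s,\widehat{Q^1_j}}f(y)$ directly, valid for the full range of $(s,t)$. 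To close your proof you would need either to reproduce that lemma or to replace (b) by a comparison that does not degenerate near $t=1-s$.

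A secondary bookkeeping error: you bound the second piece $|m_f(t,\widehat{Q^v_j})-m_f(t,Q^{v-1})|$ by $2\omega(\widehat{Q^v_j})\le 2M^\sharp_{0,s,\widehat{Q^v_j}}f(y^*)$, but non-selection of $\widehat{Q^v_j}$ is a statement about the exceptional set of $|f-c_{Q^{v-1}}|>\omega(Q^{v-1})$, so lemma (a) (applied on $\widehat{Q^v_j}$ with $c=c_{Q^{v-1}}$, $\alpha=\omega(Q^{v-1})$, and again on $Q^{v-1}$) gives $\le 2\omega(Q^{v-1})$, and $\omega(Q^{v-1})$ is dominated by $M^\sharp_{0,s,Q^{v-1}}f$, not by $M^\sharp_{0,s,\widehat{Q^v_j}}f$, because $Q^{v-1}\supsetneq\widehat{Q^v_j}$. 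The form $a^v_j\le 10n\inf_{Q^v_j}M^\sharp_{0,s,\widehat{Q^v_j}}f+2\inf_{Q^v_j}M^\sharp_{0,s,Q^v_j}f$ stated in the theorem is obtained in the paper by first recording the bound with the second term living on $Q^{v-1}$ (its (2.10)) and then shifting the index by one generation when assembling the infinite sum (its (2.18)): the $2\inf_{Q^v_j}M^\sharp_{0,s,Q^v_j}f$ is the stopping threshold used to select the next generation inside $Q^v_j$, not a bound on the jump at $Q^v_j$ itself. Your sketch conflates the two, and the ``first link contributes $2\omega(Q^v_j)$'' justification for that term is not how it arises.
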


\begin{proof}
Let $E^1 = \{x \in Q_0: |f(x) - m_f(t,Q_0)| > 2\inf_{y \in Q_0}M^{\sharp}_{0,s,Q_0}f(y)\}$.
If $|E^1|=0$, the decomposition halts -- trivially, for a.e. $\! x \in Q_0$,
\begin{equation*}
|f(x) - m_f(t,Q_0)| \le 2 \inf_{y \in Q_0}M^{\sharp}_{0,s,Q_0}f(y)\,.
\end{equation*}

So suppose that $|E^1| > 0$. Recall that by Lemma 4.1 in \cite{MedContOsc}, for $\eta > 0$,
\[ |\{x \in Q_0: |f(x) - m_f(t,Q_0)| \ge 2 \inf_{y \in Q_0}M^{\sharp}_{0,s,Q_0}f(y) + \eta\}| < s|Q_0|\,.\]
Thus, picking $\eta_k \to 0^+$, by continuity from below it readily follows that
\begin{equation}
 |\{x \in Q_0: |f(x) - m_f(t,Q_0)| > 2 \inf_{y \in Q_0}M^{\sharp}_{0,s,Q_0}f(y)\}| \le s|Q_0|\,.
\end{equation}

Now let $f^0 = (f - m_f(t,Q_0) )\indicator{Q_0}$  and
\[\Omega^1 = \{x \in Q_0: m^{t,\Delta}_{Q_0}(f^0)(x) > 2\inf_{y \in Q_0}M^{\sharp}_{0,s,Q_0}f(y)\}\,.\]
Then by Theorem 2.1 in \cite{MedContOsc}, $E^1 \subset \Omega^1$ and $|\Omega^1| > 0$ as well.
Write $\Omega^1 = \bigcup_j Q^1_j$ where the $Q^1_j$
are nonoverlapping maximal dyadic subcubes of $Q_0$ such that
\begin{equation}
 |m_{f^0}(t,Q^1_j) | > 2\inf_{y \in Q_0}M^{\sharp}_{0,s,Q_0}f(y),\ {\text{and}}\
 |m_{f^0}(t,\widehat{Q^1_j}) | \le  2\inf_{y \in Q_0}M^{\sharp}_{0,s,Q_0}f(y).
\end{equation}
Since $m_{f^0}(t,Q_0) = 0$, $Q^1_j \ne  Q_0$ for any $j$.

Now since $t \ge 1/2$, from (1.10) in \cite{MedContOsc} it follows that
\begin{equation*}
2\inf_{y \in Q_0}M^{\sharp}_{0,s,Q_0}f(y) < |m_{f^0}(t,Q^1_j)| \le m_{|f^0|}(t,Q^1_j)\,,
\end{equation*}
and therefore by the definition of median
\begin{equation}
 |\{x \in Q^1_j: |f^0(x)| > 2\inf_{y \in Q_0}M^{\sharp}_{0,s,Q_0}\} | \ge (1-t)|Q^1_j|\,.
\end{equation}
When (2.4) is summed over $j$, we have by (2.2) that
\begin{align*}
(1-t)\sum_j |Q^1_j| &\le  \sum_j  |\{x \in Q^1_j: |f^0(x)| > 2\inf_{y \in Q_0}M^{\sharp}_{0,s,Q_0}f(y)\}|
\\
&\le |\{x \in Q_0: |f^0(x)| > 2\inf_{y \in Q_0}M^{\sharp}_{0,s,Q_0}f(y)\}| \le s|Q_0|\,,
\end{align*}
so that
\begin{equation}
\sum_j |Q^1_j| \le \frac{s}{1-t}\, |Q_0|\,,
\end{equation}
where by the choice of $s$ and $t$, $s/(1-t) < 1$.

Let $a^1_j = m_{f^0}(t,Q^1_j)$. By Lemma 4.3 in \cite{MedContOsc}  we see that
\begin{equation}
 | m_{f^0}(t,Q^1_j) - m_{f^0}(t,\widehat{Q^1_j})|
 \le 10n\inf_{y \in Q^1_j}M^{\sharp}_{0,s,\widehat{Q^1_j}}f(y) \,,
\end{equation}
and therefore by (2.3) and (2.6)
\begin{align}
|a^1_j| &\le |m_{f^0}(t,Q^1_j) - m_{f^0}(t, \widehat{Q^1_j})| + |m_{f_0}(t,\widehat{Q^1_j})|
\notag
\\
&\le 10n\inf_{y \in Q^1_j}M^{\sharp}_{0,s,\widehat{Q^1_j}}f(y) + 2\inf_{y \in Q_0}M^{\sharp}_{0,s,Q_0}f(y)\,.
\end{align}

The first iteration of the local median oscillation decomposition of $f$ when $|E^1| > 0$ is then as follows: for a.e. $\! x \in Q_0$, with $g^1 = f^0\indicator{Q_0 \setminus \Omega^1}$,
\[f^0(x) = g^1(x) + \sum_j a^1_j \indicator{Q^1_j}(x) + \sum_j   (f^0(x) - m_{f^0}(t,Q^1_j)   )\indicator{Q^1_j}(x)\,.\]
Note that $g^1$ has support off $\Omega^1$, and clearly for a.e. $\! x \in Q_0$,
\[|g^1(x)| \le  2\inf_{y \in Q_0}M^{\sharp}_{0,s,Q_0}f(y)\,.\]

Now focus on the second sum. Since $f^0(x) - m_{f^0}(t,Q) = f(x) - m_f(t,Q)$ for all cubes $Q$ and functions $f$ supported in $Q$, this sum equals
\[\sum_j    ( f(x) - m_f(t,Q^1_j)   )\indicator{Q^1_j}(x)\,.\]
The idea is to repeat the above argument for each of the functions
 $f^1_j = ( f - m_f(t,Q^1_j) )\indicator{Q^1_j}$, and so on.

We now describe the iteration. Assuming that $\{Q^{k-1}_j\}$ are the dyadic cubes
corresponding to the $(k-1)$st generation of subcubes of $Q_0$ obtained as above, let
\[f_j^{k-1} =  ( f - m_f(t,Q^{k-1}_j) )\indicator{Q^{k-1}_j}\,,\]
and
\[E^k_j = \{x \in Q^{k-1}_j: f^{k-1}_j(x) > 2\inf_{y \in Q^{k-1}_j}M^{\sharp}_{0,s,Q^{k-1}_j}f(y)\}\,.\]

If $|E^k_j| = 0$,  we write $s^k_j = f^{k-1}_j$ which satisfies
\begin{equation}
|s^k_j(x)| \le  2\inf_{y \in Q^{k-1}_j}M^{\sharp}_{0,s,Q^{k-1}_j}f(y)
\end{equation}
for a.e. $  x \in Q^{k-1}_j$. These are the ``$s$'' functions since the decomposition ``stops'' at $Q^{k-1}_j$; clearly $s^k_j$ has its support on $Q^{k-1}_j$, which contains no further subcubes of the decomposition.

If $|E^k_j| > 0$, we define
\[\Omega^k_j = \big\{x \in Q^{k-1}_j: m^{t,\Delta}_{Q^{k-1}_j}(f^{k-1}_j)(x)
 > 2\inf_{y \in Q^{k-1}_j}M^{\sharp}_{0,s,Q^{k-1}_j}f(y)\big\} \supset E^k_j\,.\]
Note that the $Q^{k-1}_j$, and thus the $\Omega^k_j$, are nonoverlapping. Then $|\Omega^k_j| > 0$ as well  and
\[\Omega^k_j = \bigcup_i Q^k_i\,,\]
where the $Q^k_i$'s are nonoverlapping maximal dyadic subcubes of $Q^{k-1}_j$ such that
\begin{align}
|m_{f^{k-1}_j}(t,Q^k_i)| > &2\inf_{y \in Q^{k-1}_j}M^{\sharp}_{0,s,Q^{k-1}_j}f(y),
\ {\text{and}}\notag\\
&|m_{f^{k-1}_j}(t,\widehat{Q^k_i})| \le 2\inf_{y \in Q^{k-1}_j}M^{\sharp}_{0,s,Q^{k-1}_j}f(y) \,.
\end{align}
Then define
\[\Omega^k = \bigcup_j \Omega^k_j\,.\]

Let
\begin{equation*}
a^{k,j}_i = m_{f^{k-1}_j}(t,Q^k_i)\,,
\end{equation*}
 and note that by (2.6) and (2.9)
\begin{align}
|a^{k,j}_i| &\le  |m_{f^{k-1}_j}(t,Q^k_i) - m_{f^{k-1}_j}(t,\widehat{Q^k_i})|
+ |m_{f^{k-1}_j}(t,\widehat{Q^k_i})|
\notag
\\
&\le 10n \inf_{y \in Q^k_i} M^{\sharp}_{0,s,\widehat{Q^k_i}}f(y)
+ 2\inf_{y \in Q^{k-1}_j}M^{\sharp}_{0,s,Q^{k-1}_j}f(y)\,.
\end{align}

We then have
\[f^{k-1}_j(x) = g_j^k(x) + \sum_i \alpha^{k,j}_i\indicator{Q^k_i}(x) + \sum_i  (f(x) - m_f(t,Q^k_i) )\indicator{Q^k_i}(x)\,,\]
for a.e. $\! x \in Q^{k-1}_j$, where $g^k_j = f^{k-1}_j \indicator{Q^{k-1}_j \setminus \Omega^k_j}$ is readily seen to satisfy
\begin{equation}
|g^k_j(x)| \le 2 \inf_{y \in Q^{k-1}_j}M^{\sharp}_{0,s,Q^{k-1}_j}f(y)
\end{equation}
for a.e. $\! x \in Q^{k-1}_j$. These are the ``$g$'' functions since the decomposition ``goes on'' or continues, into $Q^{k-1}_j$; $g^k_j$ has support on $Q^{k-1}_j$ away from $\Omega^k_j$, which are the next subcubes in the decomposition.

We separate the $Q^{k-1}_j$ into two families. One family, indexed by $I^k_1$, contains those cubes where the decomposition stops, and the other, indexed by $I^k_2$, where it continues. Specifically, let
\[I^k_1 = \{j : \Omega^k \cap Q^{k-1}_j = \emptyset\}, \quad I^k_2 = \{j: \Omega^k \cap Q^{k-1}_j \neq \emptyset\}.\]
Now we group the $Q^k_i$ based on which $Q^{k-1}_j$ contains them:  if $j \in I^k_2$, let
\[J_{j}^k = \{i : Q^k_i \subset Q^{k-1}_j\}.\]
These definitions
give that
\[\Omega_{j}^k = \bigcup_{i \in J_{j}^k}Q^k_i\,.\]

Note that, as in (2.5),
\begin{equation}|\Omega^k_j \cap Q^{k-1}_j| = \sum_{i \in J_{j}^k}|Q^k_i| \le \Big( \frac{s}{1-t}\Big) |Q^{k-1}_j|
\end{equation}
so that
\begin{align}
|\Omega^k| &= \sum_j |\Omega^k_j \cap Q^{k-1}_j| \le  \Big( \frac{s}{1-t} \Big)\sum_j |Q^{k-1}_j|
\notag
\\
&= \Big( \frac{s}{1-t} \Big)|\Omega^{k-1}| \le  \Big( \frac{s}{1-t} \Big)^k|Q_0|\,.
\end{align}

In fact, we claim that for all $j$ and $1 \le v < k$,
\begin{equation}
|\Omega^k \cap Q^v_j| \le  \Big( \frac{s}{1-t} \Big)^{k-v}|Q^v_j|\,,
\end{equation}
an estimate that  is useful in what follows.

To see this, for a given $k$, let $1\le v\le k-1$; if $v=k-1$ the conclusion is
(2.12). Next, if $v=k-2$ note that
\begin{align*}
|\Omega^{k} \cap Q^{k-2}_j| &= \sum_{Q^{k}_l \subset Q^{k-2}_j}|Q^{k}_l|
= \sum_{Q^{k-1}_i \subset Q^{k-2}_j} \sum_{Q^{k}_l \subset Q^{k-1}_i}|Q^{k}_l|
\\
&= \sum_{Q^{k-1}_i \subset Q^{k-2}_j} |\Omega^{k}_i \cap Q^{k-1}_i|
\le \Big( \frac{s}{1-t} \Big) \sum_{Q^{k-1}_i \subset Q^{k-2}_j}|Q^{k-1}_i|
\\
&= \Big( \frac{s}{1-t} \Big)\, |\Omega^{k-1}_j \cap Q^{k-2}_j|
\le \Big( \frac{s}{1-t} \Big)^2 |Q^{k-2}_j|\,,
\end{align*}
where the   inequalities follow  by (2.12). Continuing recursively, we have (2.14).

The $k$th iteration of the local median oscillation decomposition of the function $f$ is as follows: for a.e. $\! x \in Q_0$,
\begin{align*}
 f(x) - m_f &(t,Q_0)\\
  &= \sum_{v = 1}^k \Big(\sum_{j \in I^v_1}s^v_j +
  \sum_{j \in I^v_2} g^v_j \Big) + \sum_{v = 1}^k \sum_{j \in I^v_2} \sum_{i \in J_{j}^v}a^{v,j}_i \indicator{Q^v_i}(x) + \psi^k(x)\,,
\end{align*}
where
\[\psi^k = \sum_{j \in I^k_2} \sum_{i \in J_{j}^k}   (f - m_f(t,Q^k_i) )\indicator{Q^k_i}\,.\]

Since $\psi^k$ is supported in $\Omega^k$, by (2.13) it readily follows that $\psi^k \to 0$ a.e.\! in $Q_0$ as $k \to \infty$, and therefore
\begin{align*}
f(x) - m_f(t,Q_0) &= \sum_{v = 1}^{\infty} \Big( \sum_{j \in I^v_1}s^v_j + \sum_{j \in I^v_2} g^v_j \Big) + \sum_{v = 1}^{\infty}\sum_{j \in I^v_2} \sum_{i \in J^v_j} a^{v,j}_i \indicator{Q^v_j}(x)
\\
&=S_1(x) + S_2(x) \,,
\end{align*}
say.

In order to bound $|f(x) - m_f(t,Q_0)|$, consider first $S_1$. Of course,
for all $v$ and $j$ the $s^v_j$'s have nonoverlapping support. This is also true for the $g^v_j$.
Furthermore, the support of any $g^v_j$ is nonoverlapping with that of any $s^v_j$. So for every $v$,
$j$, and a.e. $\! x \in Q_0$, by (2.8) and (2.11) it readily follows that
\begin{align}
\Big| \sum_{v=1}^{\infty} \Big(\sum_{j \in I^v_1}s^v_j &+ \sum_{j \in I^v_2} g^v_j \Big) \Big|
\notag
\\
 &\le \max \Big\{ \sup_{j \in I^v_1} \big\| f^{v-1}_j  \big\|_{L^{\infty}}, \; \sup_{j \in I^v_2} \big\| f^{v-1}_j\indicator{Q^{v-1}_j \setminus \Omega_{j}^v} \big\|_{L^{\infty}} \Big\}
\notag
\\
&\le \max \Big\{ \sup_{j \in I^v_1} \Big(2\inf_{y \in Q^{v-1}_j} M^{\sharp}_{0,s,Q^{v-1}_j}f(y) \Big),
\notag
\\
&\qquad\qquad\qquad\qquad \sup_{j \in I^v_2} \Big(2\inf_{y \in Q^{v-1}_j}M^{\sharp}_{0,s,Q^{v-1}_j}f(y) \Big) \Big\}
\notag
\\
&\le 2\,M^{\sharp}_{0,s,Q_0}f(x)\,.
\end{align}

We consider $S_2$ next. The summand for $v = 1$ is distinguished, so we deal with it separately. By (2.7) above,
\begin{align}
\Big |\sum_j & a_j^1 \indicator{Q^1_j}(x) \Big |\notag\\
 &\le \sum_j |a_j^1| \indicator{Q^1_j}(x)
\notag
\\
&\quad\le \sum_j \Big (10n\inf_{y \in Q^1_j}M^{\sharp}_{0,s,\widehat{Q^1_j}}f(y) + 2\inf_{y \in Q_0}M^{\sharp}_{0,s,Q_0}f(y) \Big )\indicator{Q^1_j}(x)
\notag
\\
&\quad\quad \le \sum_j \Big(10n \inf_{y \in Q^1_j}M^{\sharp}_{0,s,\widehat{Q^1_j}}f(y) \Big)\indicator{Q^1_j}(x) + 2\inf_{y \in Q_0}M^{\sharp}_{0,s,Q_0}f(y)\,.
\end{align}

 As for the other terms of the sum, by (2.10) we have
\begin{align}
\Big |& \sum_{v = 2}^{\infty}  \sum_{j \in I^v_2} \sum_{i \in J^v_j}a^{v,j}_i \indicator{Q^v_i}(x)\Big |\notag\\
& \le \sum_{v = 2}^{\infty} \sum_{j \in I^v_2} \sum_{i \in J_{j}^v}|a^{v,j}_i| \indicator{Q^v_i}(x)
\notag
\\
&\le \sum_{v = 2}^{\infty} \sum_{j \in I^v_2} \sum_{i \in J_{j}^v}\Big (10n \inf_{y \in Q^v_i} M^{\sharp}_{0,s,\widehat{Q^v_i}}f(y) + 2\inf_{y \in Q^{v-1}_j}M^{\sharp}_{0,s,Q^{v-1}_j}f(y) \Big )\indicator{Q^v_i}(x)
\notag
\\
&\le  \sum_{v = 2}^{\infty} \sum_{j \in I^v_2}\sum_{i \in J^v_j} \Big(10n \inf_{y \in Q^v_i}M^{\sharp}_{0,s,\widehat{Q^v_i}}f(y)
\Big )\indicator{Q^v_i}(x) \notag
\\
&\qquad   +\sum_{v = 2}^{\infty}\sum_{j \in I^v_2} \Big(2\inf_{y \in Q^{v-1}_j}M^{\sharp}_{0,s,Q^{v-1}_j}f(y) \Big)
\indicator{Q^{v-1}_j}(x)\,.
\end{align}

We combine (2.16) and (2.17) and note that since the sum is infinite and the families $I^v_2$ are nested,
\begin{align}
\Big |\sum_{v = 1}^{\infty} &\sum_{j \in I^v_2} \sum_{i \in J^v_j}a^{v,j}_i \indicator{Q^v_i}(x) \Big |\notag \\
&\le \sum_{v=1}^{\infty}\sum_{j \in I^v_2} \sum_{i \in J^v_j} \Big(10n\inf_{y \in Q^v_i}M^{\sharp}_{0,s,\widehat{Q^v_i}}f(y) \Big) \indicator{Q^v_i}(x)
\notag
\\
&\qquad\qquad+ \sum_{v=1}^{\infty}\sum_{j \in I^v_2}\Big(2\inf_{y \in Q^{v-1}_j}M^{\sharp}_{0,s,Q^{v-1}_j}f(y) \Big)\indicator{Q^{v-1}_j}(x)
\notag
\\
&\le \sum_{v=1}^{\infty}\sum_{j \in I^v_2} \Big(10n\inf_{y \in Q^v_j}M^{\sharp}_{0,s,\widehat{Q^v_j}}f(y)
+ 2 \inf_{y \in Q^v_j}M^{\sharp}_{0,s,Q^v_j}f(y)\Big) \indicator{Q^v_j}(x)
\notag
\\
&\qquad\qquad + 2\inf_{y \in Q_0}M^{\sharp}_{0,s,Q_0}f(y)\,.
\end{align}

Combining (2.15) and (2.18), finally we get that for a.e. $\! x \in Q_0$,
\begin{align*}
|f(x) - & m_f(t,Q_0)| \le 4\, M^{\sharp}_{0,s,Q_0}f(x)
\\
&+ \sum_{v=1}^{\infty}\sum_{j \in I^v_2} \Big(10n\inf_{y \in Q^v_j}M^{\sharp}_{0,s,\widehat{Q^v_j}}f(y)
+ 2 \inf_{y \in Q^v_j}M^{\sharp}_{0,s,Q^v_j}f(y)\Big) \indicator{Q^v_j}(x)\,,
\end{align*}
and we have finished.
\end{proof}

\section{Weighted local mean estimates for local maximal functions}

In this section we consider the control of a weighted local mean of a function by
the weighted local mean of its local maximal function. In $\R^n$, for the sharp maximal function, in the unweighted case
this result was first established  by Fefferman and Stein \cite{FSActa},
and in the weighted case by several authors, including Fujii \cite{Fujii1989}.
For the local sharp maximal function and $A_\infty$ weights $w$, it follows from the fact that
 there exists a constant $0 < s_1  < 1$ with  the  following  property:
 given  $0<s\le s_1$, there exist constants $c, c_1$ such  that  for all cubes $Q$,
\[ w \big (\{ y\in Q: |f(y)-m_f(s,Q)|>\lambda, M^\sharp_{0,s}f(y)<\alpha\} \big )\le c\, e^{-c_1\lambda/\alpha}\,w(Q)\,,\]
for all $\lambda, \alpha> 0$. This is proved in Chapter III of \cite{TorchinskyStromberg}.

We are interested in the weighted local version of these results involving weights that are not necessarily $A_\infty$.
\begin{mydef}
We say that the weights $(w,v)$ satisfy condition $F$ if
there exist positive constants $c_1, \alpha, \beta$ with $0 < \alpha <
 1$, such that for any cube $Q$ and  measurable subset $E$ of $Q$ with $|E| \le  \alpha |Q|$,
\begin{equation}
\int_E w(x)\,dx \le  c_1 \Big ( \frac{|E|}{|Q|} \Big )^{\beta}
\int_{Q \setminus E}v(x)\, dx\,.
\end{equation}
\end{mydef}

Fujii observed that if $(w,v)$ satisfy condition $F$, then
$w(x)\le c\, v(x) $ a.e., and that for $w = v$, (3.1) is equivalent to the $A_{\infty}$ condition for $w$.
He also gave a simple example of a pair $(w , v)$ that satisfy condition $F$
so that neither of them is an $A_\infty$ weight   and no $A_\infty$ weight
can be inserted between them:  let $w(x) = 0$ if $0 < x < 1$ and $w(x) = 1$ otherwise, and
$v(x) = 0$ if $1/3< x < 2/3$ and $v(x) = 1$ otherwise \cite{Fujii1989}.

Along similar lines, if $w$ is in weak $A_\infty$, i.e.,
there exist positive numbers $c, \beta$ such that
for any cube $Q$ and  measurable subset $E$ of $Q$,
\[ w(E)\le c \Big ( \frac{|E|}{|Q|} \Big )^{\beta}\,w (2Q)\,,
\]
  a simple computation gives that $(w, Mw)$ satisfy condition $F$.
In fact, by an observation of Sawyer \cite{SawProc} this
also follows from the next example.

 We say that a weight $w$ is in the Muckenhoupt class  $C_p$ if there exist positive constants $\beta, c$ such that
\[ \int_E w(x)\,dx \le  c\,  \Big(\frac{|E|}{|Q|}\Big)^\beta \int_{\R^n}   M(\indicator{Q})(x)^p\, w( x )\, dx \]
whenever $E$ is a subset of a cube $Q\subset \R^n$; clearly $A_\infty \subset  C_p$, $1<p<\infty$,
 but $C_p$ contains weights not in $A_\infty$, as the
example $w(x)=\indicator{[0,\infty)}(x)v(x)$ with $v\in A_\infty$ of the line shows.
Now, $C_p$ is necessary for the integral inequality (1.1) to hold with $\Phi(t)=t^p$, $1<p<\infty$,
and  $C_q$ with $q>p$ is sufficient for (1.1) to hold (\,\cite{Muck, SawProc}, and \cite{Yab}.)

Note that for a fixed $0<\alpha<1$, we have  $M(\indicator{Q})(x)
 \le c\, M(\indicator{{Q\setminus E}})(x)$,  where $E\subset Q$ is such that  $|E|\le \alpha |Q|$
and $c$ depends on $\alpha$ but is independent of $E$ and $Q$. Then by the  Fefferman-Stein maximal inequality,
\begin{align*}
\int_{\R^n} M(\indicator{Q})(x)^p\, w( x )\, dx &\le c\int_{\R^n} M(\indicator{Q\setminus E})(x)^p\, w( x )\, dx\\
&  \le c \int_{Q\setminus E} M w( x )\, dx\,.
\end{align*}
Thus, if $w$ satisfies the condition $C_p$ for some $p$, $1<p<\infty$,
\[ \int_{E}  w(x)\,dx \le c\, \Big(\frac{|E|}{|Q|}\Big)^\beta\,    \int_{Q\setminus E} M w( x )\, dx
\]
and $(w, Mw)$ satisfy condition $F$.

A word of caution: at the end of this section we show indirectly
that for some weight $w$,  $(w,Mw)$ do not satisfy
condition $F$. Along these lines, for a Young function $A$  let
\[\|f\|_{L^A(Q)}= \inf \Big\{\lambda>0: \frac1{|Q|}\int_Q A\Big( \frac{|f(y)|}{\lambda}\Big)\,dy\le 1\Big\}\,,\]
and
\[ M_A f(x)=\sup_{x\in Q} \|f\|_{L^A(Q)}\,.\]
It then readily follows that for a weight $w$, $M_A w\in A_1$ if
 \[\int_0^t \frac{A(s)}{s^2}\,d s\le c\, \frac{A(t)}{t}\,.\]
Hence,
for any weight $w$ and $1<r<\infty$,  $  ( w, M_r w   )$ satisfy condition $F$.

On the other hand, for an integer $k=0,1,\ldots$, let   $A_k(t)=t\log^{k}(1+t)$. Then, if
$M^{k+1}$ denotes  the $k+1$ composition  of the Hardy-Littlewood maximal function operator
with itself,
 $M^{k+1}$ is pointwise comparable to the maximal operator $M_{A_k}$,  \cite{Compo},
and  by the comments
after Theorem 5.4,  for every $k$ there exists a weight $w$ such that  $(w,M_{A_k}w)$ do not
satisfy condition $F$. In particular, for such a weight $w$, $M_{A_k}w \notin A_\infty$.

Two remarks are in order before  we proceed to prove the main result in this section. First, the choice of the parameters
$s$ and $t$   in (3.2) below remains fixed throughout the paper unless otherwise noted, and second,
note that in the proof below the constant is linear with respect to the constant $c_1$ of the
weights that satisfy condition $ F$, and in particular, linear  in the $A_\infty$ norm of $w$.

\begin{theorem}
Let $\Phi$ satisfy condition $C$ with doubling constant $c_0$, $(w,v)$  weights on $\mathbb{R}^n$ satisfying condition $F$
with constants $\beta, c_1$, and pick $s,t$ such that
$0 < s \le 1/2$,  $1/2 < t < 1-s$, and
\begin{equation}
c_0 \Big (\frac{s}{1-t} \Big )^{\beta} < 1\,.
\end{equation}

Then for any measurable function $f$ and a cube $Q_0\subset \R^n$,
with a constant $c$ independent of $\Phi, Q_0,$ and $f$,
\begin{equation}
\int_{Q_0}\Phi  (|f(x) - m_f(t,Q_0)|  )\, w(x)\,dx \le  c \int_{Q_0}\Phi  ( M^{\sharp}_{0,s,Q_0}f(x)   )\,v(x)\,dx\,.
\end{equation}

Furthermore, if $f$ is such that $\lim_{ Q  \to \R^n} m_f(t,Q) = 0$,
  we also have
\begin{equation}
\int_{\R^n} \Phi  (|f(x)|  )\,w(x)\,dx \le c
\int_{\R^n} \Phi (M^{\sharp}_{0,s}f(x) )\, v(x)\,dx\,.
\end{equation}
\end{theorem}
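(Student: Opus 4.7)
The plan is to apply Theorem 2.1 for a pointwise decomposition of $|f-m_f(t,Q_0)|$ and then control the resulting tree sum by combining the doubling of $\Phi$ with the geometric decay in Theorem 2.1(iv) and condition~$F$, exploiting hypothesis (3.2) to close the geometric series. First I would apply Theorem 2.1 to write, for a.e.\ $x\in Q_0$,
\[|f(x) - m_f(t,Q_0)| \le 4\,M^{\sharp}_{0,s,Q_0}f(x) + A(x),\qquad A(x) := \sum_{v\ge1}\sum_{j\in I^v_2} a^v_j\,\indicator{Q^v_j}(x).\]
Two applications of the doubling of $\Phi$ then split the left-hand side of (3.3) into a piece dominated by $\int_{Q_0}\Phi(M^{\sharp}_{0,s,Q_0}f)\,w\,dx$ and a piece dominated by $\int_{Q_0}\Phi(A)\,w\,dx$. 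The first is immediate from Fujii's observation that condition~$F$ implies $w\le cv$ almost everywhere, so the task reduces to the $\Phi(A)$-term.

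To bound $\int_{Q_0}\Phi(A)\,w\,dx$ I would partition $Q_0$ (modulo a null set, since $|\Omega^v|\to 0$) into the disjoint leaves $Q_0\setminus\Omega^1$ together with $E^v_j := Q^v_j\setminus\Omega^{v+1}$, on which $A$ is constant and equal to the partial sum $T^v_j := \sum_{u=1}^v a^u_{j(u)}$ along the ancestral chain in the stopping tree. Iterating the doubling inequality $\Phi(T^v_j)\le c_0\Phi(T^{v-1}_{\mathrm{par}(j)}) + c_0\Phi(a^v_j)$ down the chain gives $\Phi(T^v_j)\le c_0\sum_{u=1}^{v} c_0^{\,v-u}\,\Phi(a^u_{j(u)})$. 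Substituting into $\sum_{v,j}\Phi(T^v_j)w(E^v_j)$ and re-indexing the double sum by the source cube $(u,l)$ rather than by the leaf $(v,j)$ reduces matters to controlling
\[\sum_{k\ge0} c_0^{\,k}\,w\bigl((\Omega^{u+k}\setminus\Omega^{u+k+1})\cap Q^u_l\bigr) \;\le\; \sum_{k\ge0} c_0^{\,k}\,w\bigl(\Omega^{u+k}\cap Q^u_l\bigr).\]
Iterating Theorem 2.1(iv) gives $|\Omega^{u+k}\cap Q^u_l|\le\gamma^k|Q^u_l|$ with $\gamma := s/(1-t)$, and condition~$F$ (valid once $\gamma^k\le\alpha$, with smaller $k$ handled by $w\le cv$) majorizes the $k$-th summand by $c_1\gamma^{k\beta}v(Q^u_l)$. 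Hypothesis (3.2), $c_0\gamma^\beta<1$, makes the geometric series $\sum_k(c_0\gamma^\beta)^k$ converge, yielding inner bound $c_1v(Q^u_l)/(1-c_0\gamma^\beta)$, linear in $c_1$ as claimed. Combining with the pointwise estimate $\Phi(a^u_l)\le c\inf_{y\in Q^u_l}\Phi(M^{\sharp}_{0,s,Q_0}f(y))$ from (2.1), and using that $\{Q^u_l\}$ is sparse with parameter $1-\gamma$ (the leaves $E^u_l\subset Q^u_l$ being pairwise disjoint with $|E^u_l|\ge(1-\gamma)|Q^u_l|$ by Theorem 2.1(iv)), a standard sparse-family argument collapses $\sum_{u,l}\Phi(a^u_l)v(Q^u_l)$ into $c\int_{Q_0}\Phi(M^{\sharp}_{0,s,Q_0}f)\,v\,dx$, completing the proof of (3.3).

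For (3.4) I would apply (3.3) to an exhausting sequence of cubes $Q_0\nearrow\R^n$. The hypothesis $m_f(t,Q_0)\to 0$ gives pointwise convergence $|f-m_f(t,Q_0)|\to|f|$, and $M^{\sharp}_{0,s,Q_0}f\nearrow M^{\sharp}_{0,s}f$ follows directly from the definitions, so monotone convergence on both sides produces (3.4).

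The principal obstacle is the convergence of the tree sum in the middle step: iterating the doubling of $\Phi$ produces coefficients $c_0^k>1$ that, absent extra decay, would force the series to blow up. Hypothesis (3.2), $c_0\gamma^\beta<1$, is precisely the threshold at which the geometric decay coming from Theorem 2.1(iv) combined with condition~$F$ overcomes this blowup, which is why the hypothesis is stated in exactly this form. A secondary subtlety is the asymmetry of condition~$F$, which transfers $w$-mass of small subsets into $v$-mass of their complements; this is precisely the mechanism that forces the two different weights $w$ and $v$ to appear on opposite sides of (3.3).
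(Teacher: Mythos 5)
Your overall plan follows the paper's: apply the Theorem 2.1 decomposition, separate the $M^{\sharp}_{0,s,Q_0}f$ term from the tree sum, use doubling to peel off the ancestral chain, exploit condition $F$ together with the geometric decay in (2.14), and let hypothesis (3.2) close the series. Where your argument diverges from the paper's is in how the $v$-integral is recovered at the very end, and that step contains a genuine gap.

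After summing the geometric series in $k$, you arrive at $\sum_{u,l}\Phi(a^u_l)\,v(Q^u_l)$, and you propose to collapse this by a ``standard sparse-family argument'' using that the sets $E^u_l = Q^u_l\setminus\Omega^{u+1}$ are pairwise disjoint and $|E^u_l|\ge(1-\gamma)|Q^u_l|$. That argument requires passing from $v(Q^u_l)$ to $v(E^u_l)$, i.e., it needs $v(Q^u_l)\le c\,v(E^u_l)$. Lebesgue-measure sparsity gives $|Q^u_l|\le c|E^u_l|$, but nothing in condition $F$ transfers this to $v$-measure; condition $F$ only lets you trade $w$-mass of a small subset for $v$-mass of its complement. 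Indeed, the theorem is explicitly stated for non-$A_\infty$ pairs such as Fujii's example ($w=\indicator{\R\setminus(0,1)}$, $v=\indicator{\R\setminus(1/3,2/3)}$), where $v$ vanishes on an interval and is far from doubling, so $v(Q^u_l)$ can be strictly positive while $v(E^u_l)=0$. The sparse collapse therefore fails.

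The paper avoids this by never discarding the set $Q^v_j\setminus\Omega^k$: it keeps the output of condition $F$ in the form $c_1\gamma^{\beta(k-v)}\int_{Q^v_j\setminus\Omega^k}\Phi(a^v_j)\,v\,dx$, sums in $j$ (the $Q^v_j$ are disjoint for fixed $v$) to get $\int_{\Omega^v\setminus\Omega^k}\Phi(M^{\sharp}_{0,s,Q_0}f)\,v\,dx$, and then controls the double sum over $(v,k)$ by the observation that, for fixed $k'=k-v$, each point lies in at most $k'$ of the annuli $\Omega^v\setminus\Omega^{v+k'}$. This replaces your sparse-family step and requires no doubling of $v$. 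To repair your proof, postpone the $k$-summation, retain $v(Q^u_l\setminus\Omega^{u+k})$ in place of $v(Q^u_l)$, and invoke the bounded-overlap observation rather than sparsity. Your treatment of (3.4) is essentially right, though strictly speaking the left-hand side requires Fatou rather than monotone convergence since $|f-m_f(t,Q_0)|$ is not monotone in $Q_0$.
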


\begin{proof}
Fix a cube $Q_0$. Then by Theorem 2.1, for a.e.  $x \in Q_0$,
\[|f(x) - m_f(t,Q_0)| \le 4M^{\sharp}_{0,s,Q_0}f(x) + \sum_{v=1}^{\infty}\sum_{j \in I^v_2}a^v_j \indicator{Q^v_j}(x)\,,\]
where by (2.1),
\[a^v_j \le (10n+2)\inf_{y \in Q^v_j} M^{\sharp}_{0,s,\widehat{Q^v_j}}f(y) \leq (10n+2)\inf_{y \in Q^v_j}M^{\sharp}_{0,s,Q_0}f(y).\]

Then
\begin{align*}
 \Phi(|f(x) - m_f(t,Q_0)|)
 &\le \Phi \Big(  4M^{\sharp}_{0,s,Q_0}f(x) + \sum_{v=1}^{\infty}\sum_{j \in I^v_2} a^v_j
\indicator{Q^v_j}(x)  \Big )
\\
&\le  c_0^3   \Phi (M^{\sharp}_{0,s,Q_0}f(x) ) + c_0 \Phi \Big (
\sum_{v=1}^{\infty}\sum_{j \in I^v_2} a^v_j \indicator{Q^v_j}(x) \Big ) \,,
\end{align*}
and therefore,
 \begin{align*}
\int_{Q_0}  \Phi (|f(x) - & m_f(t,Q_0)| )\, w(x)\, dx
\\
&\le c \int_{Q_0}  \Phi (M^{\sharp}_{0,s,Q_0}f(x) )\, w(x)\,dx\\
&\qquad\qquad + c \int_{Q_0} \Phi \Big (
\sum_{v=1}^{\infty}\sum_{j \in I^v_2} a^v_j \indicator{Q^v_j}(x) \Big )w(x)\, dx
\\
&= c\,I + c\, J,
\end{align*}
say. Now, since $w(x)\le c_3\, v(x)$ a.e., $I$ is of the right order.

As for $J$, since $\Phi(0) = 0$ and the $\Omega^k$ are nested,  the domain of integration extends to $\Omega^1 = \bigcup_{k=1}^\infty (\Omega^k\setminus
\Omega^{k+1})$. Then we may write $J=\sum_{k=1}^\infty J_k$ where
\[J_1= \int_{\Omega^1 \setminus \Omega^2} \Phi \Big ( \sum_{j \in I^1_2}
a^1_j \indicator{Q^1_j}(x) \Big )w(x)\,dx\]
and for $k\ge 2$, since only cubes of up to the $k$th generation enter in $\Omega^k \setminus \Omega^{k+1}$,
\[ J_k = \int_{\Omega^k \setminus
\Omega^{k+1}} \Phi \Big (\sum_{v=1}^k \sum_{j \in
I^v_2} a^v_j \indicator{Q^v_j}(x) \Big )w(x)\,dx\,.
\]

Focusing on the $J_k$ for $k \ge  2$, the integrand is bounded by
\begin{align*}
 &\Phi \Big  (\sum_{v=1}^k  \sum_{j \in I^v_2} a^v_j
 \indicator{Q^v_j}(x) \Big )\\
&\qquad\quad \le  \sum_{v=1}^k c_0^{k-v+1} \Phi \Big (\sum_{j \in
I^v_2} a^v_j \indicator{Q^v_j}(x) \Big )
\\
&\qquad\quad\qquad\quad
\le c_0  \Phi \Big (\sum_{j\in I^v_2} a^k_j
\indicator{Q^k_j}(x) \Big ) + \sum_{v=1}^{k-1} c_0^{k-v+1} \Phi \Big ( \sum_{j \in I^v_2} a^v_j \indicator{Q^v_j}(x)
\Big ) \,,
\end{align*}
and therefore  $J_k$ does not exceed
\begin{align*} & c_0 \Big(\int_{\Omega^k \setminus \Omega^{k+1}} \Phi \Big (\sum_{j\in I^v_2} a^k_j
\indicator{Q^k_j}(x) \Big ) \,w(x)\,dx\\
&\qquad\quad +
 \sum_{v=1}^{k-1} c_0^{k-v} \int_{\Omega_k} \Phi \Big ( \sum_{j \in I^v_2} a^v_j \indicator{Q^v_j}(x)
\Big )\,w(x)\,dx\Big) \\
&\qquad\quad\qquad\quad = c_0( J_k^1+J_k^2)\,,
\end{align*}
say.

Note that the $J_k^1$ and $J_1$ are essentially of the same form, and
with $c\le  c_0^{\log(10n + 2)}$ their total contribution is
\begin{align} J_1+\sum_{k=2}^\infty  J_k^1 &\le c_0 \sum_{k=1}^\infty \int_{\Omega^k \setminus \Omega^{k+1}} \Phi \Big (\sum_{j\in I^v_2} a^k_j
\indicator{Q^k_j}(x) \Big ) \,w(x)\,dx\notag
\\
&\le  c_0  \sum_{k=1}^{\infty} \sum_{j \in I^k_2}\int_{Q^k_j
\setminus \Omega^{k+1}} \Phi  ( a^k_j ) \, w(x)\, dx
\notag
\\
&\le c_0 c   \sum_{k=1}^{\infty} \sum_{j \in I^k_2}
\int_{Q^k_j \setminus \Omega^{k+1}}
\Phi ( M^{\sharp}_{0,s,Q_0}f(x) )\,w(x)\,dx
\notag
\\
&\le c_0 c  \int_{Q_0}
\Phi ( M^{\sharp}_{0,s,Q_0}f(x) )\, w(x)\,dx\,.
\notag
\end{align}

As for the  $J_k^2$, we claim that each $J_k^2$ satisfies,
for $1 \le v \le k-1$,
\begin{align*}\int_{\Omega^k}\Phi  \Big ( \sum_{j \in I^v_2} a^v_j \indicator{Q^v_j}(x) &\Big)\, w(x)\,dx\\
&\le c_1 \Big (\frac{s}{1-t} \Big )^{\beta(k-v)} \sum_{j \in I^v_2} \int_{Q^v_j \setminus \Omega^k}\Phi(a^v_j)\, v(x)\, dx\,.
\end{align*}

Indeed, since $\{Q^v_j\}_j$ are pairwise disjoint and $\Phi(0)= 0$, by condition $F$ we have
\begin{align}
&\int_{\Omega^k}  \Phi \Big ( \sum_{j \in I^v_2} a^v_j
\indicator{Q^v_j}(x) \Big )w(x)\,dx\notag\\
 &\quad\quad = \sum_{j \in I^v_2} \Phi (a^v_j )\int_{\Omega^k}
\indicator{Q^v_j}(x)\, w(x)\,dx\notag
\\
&\quad\quad\quad\quad = \sum_{j \in I^v_2} \Phi   ( a^v_j   ) \int_{\Omega^k \cap Q^v_j} w(x)\,dx\notag
\\
& \quad\quad\quad\quad\quad\quad \le c_1 \sum_{j \in I^v_2}  \Phi   (a^v_j   )  \Big (\frac{|\Omega^k \cap Q^v_j|}{|Q^v_j|}
\Big )^\beta \int_{Q^v_j \setminus
\Omega^k} v(x)\, dx\notag
\\
&\quad\quad\quad\quad\quad\quad\quad\quad
\le c_1 \, \Big (\frac{s}{1-t} \Big )^{\beta(k-v)}\sum_{j \in I^v_2}
\int_{Q^v_j \setminus \Omega^k} \Phi   ( a^v_j   )\,v(x)\, dx\,,
\end{align}
where the last inequality follows from (2.14) in Theorem 2.1.

Therefore, for $k\ge 2$, with
\[\alpha= c_0 \Big (\frac{s}{1-t} \Big )^{\beta }<1\,,\]
we have
\[J_k^2\le   c_1 \sum_{v=1}^{k-1} \alpha^{k-v} \sum_{j \in I^v_2} \int_{Q^v_j \setminus
\Omega^k} \Phi   ( a^v_j   ) \,v(x)\, dx\,,
\]
so that with $c\le  c_0^{\log(10n+2)}$,
\begin{align} \sum_{j \in I^v_2} \int_{Q^v_j \setminus
\Omega^k} \Phi   ( a^v_j  ) \, v(x)\, dx &\le c \sum_{j \in I^v_2}\int_{Q^v_j
\setminus \Omega^k} \Phi (
M^{\sharp}_{0,s,Q_0}f(x)  )\, v(x)\, dx
\notag
\\
&\le c \int_{\Omega^v
\setminus \Omega^k} \Phi (
M^{\sharp}_{0,s,Q_0}f(x)  )\,v(x)\, dx \,.
\end{align}

It only remains to bound  $\sum_{k=2}^\infty J_k^2$.
By (3.5) we have
\begin{align*} \sum_{k=2}^\infty J_k^2  &\le  c \sum_{k=2}^{\infty} \sum_{v=1}^{k-1} \alpha^{k-v}  \int_{\Omega^v
\setminus \Omega^k} \Phi (M^{\sharp}_{0,s,Q_0}f(x)  )\, v(x)\, dx
 \\
&=  c \sum_{v=1}^{\infty} \sum_{k=v+1}^{\infty} \alpha^{k-v}  \int_{\Omega^v
\setminus \Omega^k} \Phi (M^{\sharp}_{0,s,Q_0}f(x) )\,v(x)\, dx
\\
&=  c \sum_{v=1}^{\infty} \sum_{k=1}^{\infty} \alpha^{k}  \int_{\Omega^v
\setminus \Omega^{v+k}} \Phi (M^{\sharp}_{0,s,Q_0}f(x) )\,v(x)\, dx
\\
&=  c \sum_{k=1}^{\infty} \alpha^{k}    \int_{Q_0} \sum_{v=1}^{\infty}   \indicator{\Omega^v
\setminus \Omega^{v+k}}(x) \, \Phi  (M^{\sharp}_{0,s,Q_0}f(x)  )\, v(x)\, dx\,.
\end{align*}

Now, since the $\Omega^v$ are nested and contained in $Q_0$, and for fixed $k$, a set $\Omega^j \setminus \Omega^{j+k}$ overlaps at most $k$ of the other sets $\{\Omega^v \setminus \Omega^{v+k}\}_v$, $\sum_{v=1}^{\infty}   \indicator{\Omega^v
\setminus \Omega^{v+k}}(x)\le k \,\indicator{Q_0}(x)$, and
\begin{align}\sum_{k=2}^\infty J_k^2 &\le
c\Big( \sum_{k=1}^{\infty}  k\,\alpha^k  \Big) \int_{Q_0} \Phi ( M^{\sharp}_{0,s,Q_0}f(x) )\,v(x)\,dx\notag\\
&\le c \int_{Q_0}\Phi (M^{\sharp}_{0,s,Q_0}f(x) )\,v(x)\,dx\,.
\end{align}

We thus have
\[\int_{Q_0 } \Phi   (|f(x) - m_f(t,Q_0)|   )\, w(x)\,dx \le  c \int_{Q_0}
\Phi (M^{\sharp}_{0,s,Q_0}f(x) )\, v(x)\,dx\,.\]

Furthermore, if   $\lim_{Q_0 \to \R^n}m_{ f }(t,Q_0) = 0$,
by Fatou's lemma
\[\int_{\mathbb{R}^n} \Phi   (|f(x)|   )\,w(x)\,dx \le c \int_{\mathbb{R}^n}
\Phi ( M^{\sharp}_{0,s}f(x) )\, v(x)\,dx\,.\]

This completes the proof.
\end{proof}

A couple of comments. First,
 observe that  if the right-hand side of (3.4) above is finite, then
as in Chapter III of \cite{TorchinskyStromberg}, $\lim_{Q_0\to\R^n}m_f(t,Q_0)=m_f$ exists along a sequence of $Q_0\to\R^n$, and the conclusion then reads
\[\int_{\R^n}\Phi (|f(x) - m_f| )\, w(x)\,dx \le  c \int_{\R^n}\Phi   ( M^{\sharp}_{0,s}f(x) )\, v(x)\,dx\,.\]
As Lerner observed, $m_f = 0$ if $f^*(+\infty) = 0$,
where  $f^*$ denotes the nonincreasing rearrangement of $f$,
which in turn holds if and only if
$|\{x \in \R^n : |f(x)| > \alpha\}| < \infty$
for all $\alpha > 0$, \cite{L}.
In particular, this holds if the support of $f$ has finite measure or if $f$ is in weak-$L^p(\R^n)$ for
some $0<p<\infty$.

Second, (3.5), (3.6) and (3.7)  hold if (3.1) above is replaced by
\begin{equation*}
\int_E w(x)\,dx \le  c_1 \psi \Big ( \frac{|E|}{|Q|} \Big )
\int_{Q \setminus E}v(x)\, dx\,,
\end{equation*}
where $\sum_{k=1}^\infty k \, c_0^k \, \psi(\alpha^k)<\infty$.
Thus, the class of weights that satisfy condition $F$ could be extended to include these general
$\psi$'s as well.

Our next result, essentially due to Lerner, holds for concave $\Phi$, including
$\Phi(t)=t$, with $v=Mw$
on the right-hand side of (3.3), \cite{Lerner2010, LernerSummary}.

\begin{theorem}
Let $\Phi$ be a concave function with $\Phi(0) = 0$, $f$
a measurable function on $\R^n$, and  $0 < s < 1/2$ and $1/2 \le t < 1-s$.
 Then for any weight $w$ and cube $Q_0 \subset \mathbb{R}^n$,
\begin{equation*}
\int_{Q_0}\Phi  (|f(x) - m_f(t,Q_0)|   )\,w(x)\,dx
\le  c\int_{Q_0}\Phi (M^{\sharp}_{0,s,Q_0}f(x) )\, Mw (x)\,dx\,.
\end{equation*}
Furthermore, if $f$ is such that  $m_f(t,Q_0) \to 0$ as $Q_0 \to \mathbb{R}^n$, then
\begin{equation}
\int_{\mathbb{R}^n}\Phi(|f(x)| )\,w(x)\,dx \le c\int_{\mathbb{R}^n}\Phi (M^{\sharp}_{0,s}f(x) )\,Mw(x)\,dx\,.
\end{equation}
\end{theorem}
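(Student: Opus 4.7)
The plan is to run the Lerner-style median oscillation decomposition from Theorem 2.1 exactly as in the proof of Theorem 3.1, but exploit concavity of $\Phi$ instead of condition $F$ on the weight pair. Fix a cube $Q_0$. By Theorem 2.1, for a.e. $x \in Q_0$,
\[
|f(x) - m_f(t,Q_0)| \le 4\,M^{\sharp}_{0,s,Q_0} f(x) + \sum_{v=1}^{\infty}\sum_{j\in I^v_2} a^v_j\,\indicator{Q^v_j}(x),
\]
with $a^v_j \le (10n+2)\inf_{y\in Q^v_j} M^{\sharp}_{0,s,\widehat{Q^v_j}} f(y)$. Since $\Phi$ is concave with $\Phi(0)=0$, it is subadditive and satisfies $\Phi(\lambda t) \le \lambda \Phi(t)$ for $\lambda\ge 1$. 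Moreover, by the Lebesgue differentiation theorem $w(x) \le Mw(x)$ a.e. Consequently, the contribution of $4M^{\sharp}_{0,s,Q_0} f$ is already controlled by $c\int_{Q_0}\Phi(M^{\sharp}_{0,s,Q_0} f)\,Mw\,dx$, and it remains to bound the sum.

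Since for fixed $v$ the cubes $\{Q^v_j\}_j$ are nonoverlapping (property (ii)), subadditivity of $\Phi$ applied twice yields
\[
\Phi\!\Big(\sum_{v,j} a^v_j\indicator{Q^v_j}(x)\Big) \le \sum_{v=1}^{\infty}\sum_{j\in I^v_2}\Phi(a^v_j)\,\indicator{Q^v_j}(x).
\]
Integrating against $w$ turns the right-hand side into $\sum_{v,j}\Phi(a^v_j)\,w(Q^v_j)$, so I must bound $\Phi(a^v_j)\,w(Q^v_j)$ by an integral over a set that has bounded overlap in $v$ and $j$. This is where property (iv) enters: since $|\Omega^{v+1}\cap Q^v_j|\le \frac{s}{1-t}|Q^v_j|$ with $s/(1-t)<1$, one has $|Q^v_j\setminus \Omega^{v+1}|\ge c\,|Q^v_j|$. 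Using that $Mw(y)\ge w(Q^v_j)/|Q^v_j|$ for every $y\in Q^v_j$, and that $\Phi(a^v_j)\le (10n+2)\,\Phi(M^{\sharp}_{0,s,Q_0} f(y))$ for every $y\in Q^v_j$ (by concavity and monotonicity combined with the infimum bound on $a^v_j$), I can estimate
\[
\Phi(a^v_j)\,w(Q^v_j) = \Phi(a^v_j)\,\frac{w(Q^v_j)}{|Q^v_j|}\,|Q^v_j| \le c\int_{Q^v_j\setminus \Omega^{v+1}} \Phi(M^{\sharp}_{0,s,Q_0} f(y))\,Mw(y)\,dy.
\]

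Now I sum. For fixed $v$, the sets $\{Q^v_j\setminus \Omega^{v+1}\}_j$ are pairwise disjoint and their union lies in $\Omega^v\setminus \Omega^{v+1}$; and the sets $\{\Omega^v\setminus\Omega^{v+1}\}_v$ are themselves pairwise disjoint subsets of $Q_0$ by the nesting in property (iii). Therefore
\[
\sum_{v=1}^{\infty}\sum_{j\in I^v_2}\Phi(a^v_j)\,w(Q^v_j) \le c\int_{Q_0}\Phi(M^{\sharp}_{0,s,Q_0} f)\,Mw\,dx,
\]
which, combined with the trivial estimate for $4M^{\sharp}_{0,s,Q_0} f$, gives the asserted local inequality. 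The global statement (3.8) then follows from Fatou's lemma once $m_f(t,Q_0)\to 0$ along some sequence $Q_0\to\mathbb{R}^n$, exactly as at the end of the proof of Theorem 3.1.

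The only step that requires any care is arranging the single-integral bound for $\Phi(a^v_j)\,w(Q^v_j)$; everything else is bookkeeping. The key technical insight is that the loss from passing from $w$ to $Mw$ frees one from the $A_\infty$-type geometric decay in $v$ used in Theorem 3.1, and it is replaced by the disjointness of $\{Q^v_j\setminus\Omega^{v+1}\}$ across both indices, which is supplied by property (iv) and the choice of parameters $s<1/2\le t<1-s$.
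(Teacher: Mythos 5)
Your proof is correct and follows essentially the same route as the paper's: the same decomposition from Theorem 2.1, subadditivity of the concave $\Phi$, and the key observation that the sets $F^v_j = Q^v_j\setminus\Omega^{v+1}$ are pairwise disjoint over all $v$ and $j$ with $|F^v_j|\ge c_{s,t}|Q^v_j|$, so that each term $\Phi(a^v_j)\,w(Q^v_j)$ can be dominated by $c\int_{F^v_j}\Phi(M^{\sharp}_{0,s,Q_0}f)\,Mw\,dx$ and the sum telescopes into a single integral over $Q_0$.
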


\begin{proof}
By Theorem 2.1 and the concavity of $\Phi$ we have
\begin{align*}
\int_{Q_0}\Phi &(  |f(x) - m_f(t,Q_0)|  )\,w(x)\,dx\\
& \le c\int_{Q_0}\Phi (M^{\sharp}_{0,s,Q_0}f(x) )\, w(x)\,dx
+c\sum_{v=1}^{\infty}\sum_{j \in I^v_2} \Phi (a^v_j )\int_{Q^v_j}\,w(x)\,dx\,,
\end{align*}
where
\[a^v_j \le  (10n+2)\inf_{y \in Q^v_j} M^{\sharp}_{0,s,Q_0}f(y)\,.\]

Now, since $w(x)\le Mw(x)$, the integral term is of the right order. Next, by construction,
the $Q^v_j$ are nonoverlapping over fixed $v$, but  since each $Q^v_j$ is a subcube of some $Q^{v-1}_i$,
they are not nonoverlapping over all $v$.  So we define $F^v_j = Q^v_j \setminus \Omega^{v+1}$, which are pairwise disjoint over all $v$ and $j$.
 Note that by (2.13) we have
\begin{equation*}
|F^v_j| \ge  \Big (1 - \frac{s}{1-t} \Big )|Q^v_j| = c_{s,t}|Q^v_j|.
\end{equation*}

We then estimate
\begin{align*}
\Phi (a^v_j )\int_{Q^v_j}\,w(x)\,dx &\le c\ \Phi \Big (\inf_{y \in Q^v_j} M^{\sharp}_{0,s,Q_0}f(y) \Big ) \int_{Q^v_j} w(x)\,dx\\
& \le c\,c_{s,t}\,|F^v_j| \, \Phi \Big (\inf_{y \in Q^v_j}M^{\sharp}_{0,s,Q_0}f(y) \Big ) \frac{1}{|Q^v_j|}\int_{Q^v_j}w(x)\,dx
\\
&\le c  \Big( \int_{F^v_j} \Phi (M^{\sharp}_{0,s,Q_0}f(x))\,dx \Big)\, \inf_{y \in F^v_j}Mw(y)
\\
&\le c \int_{F^v_j} \Phi (M^{\sharp}_{0,s,Q_0}f(x) )\,Mw(x) \,dx\,.
\end{align*}
Then summing, by the disjointness of the $F^v_j$ this gives
\begin{align*}
\sum_{v=1}^{\infty} \sum_{j \in I^v_2} \inf_{y \in Q^v_j} M^{\sharp}_{0,s,Q_0} & f(y)  \int_{Q^v_j}w(x)\,dx
\\
&\le c \int_{Q_0} \Phi (M^{\sharp}_{0,s,Q_0}f(x) ) \, Mw(x)\,dx\,,
\end{align*}
and the desired conclusion follows in this case.

Finally, if $m_{f}(t,Q_0) \to 0$ as $Q_0 \to \R^n$,  Fatou's lemma gives (3.8).
\end{proof}

Now, for an arbitrary weight $w$, note that (3.8) cannot hold for arbitrary $\Phi$.
Indeed, suppose that it holds for $\Phi(t) = t^p$ for some $p>2$. Then if $f = Tg$, where $T$ is a singular
integral operator, from (3.8), Theorem 4.1 below, and
the Fefferman-Stein maximal inequality it readily follows that
\[\int_{\R^n} |Tg(x)|^p\, w(x)\, dx \leq c \int_{\R^n}  |g(x)|^p \,M_{L\log L}w(x) \, dx,\]
which contradicts    (1.11).
Thus for arbitrary $w$, $(w,Mw)$ gives (3.8) for some but not all $\Phi$,
and therefore for some $w$,  $(w, Mw)$ do not satisfy $F$.

\section{Pointwise Inequalities Revisited}
 We prove here a local version of the estimate
 $M^{\sharp}_{0,s}(Tf)(x) \le  c\,Mf(x)$ for Calder\'{o}n-Zygmund singular integral operators established in \cite{JawerthTorchinsky}.
 We also recast similar estimates with $T$ replaced by  a
 singular integral operator with kernel satisfying   H{\"o}rmander-type conditions
 and an integral operator with a homogenous kernel, and $M$ by an appropriate maximal function $M_T$.

We start with the singular integral case. First an observation of a geometric nature: there exists
a dimensional constant $c_n$ such that for every cube $Q $ in $\R^n$, if $x,x'\in Q$ and
  $y\notin 2^m Q$  for some $m\ge 1$, then
\begin{equation}  \frac{|x - x'|}{|x - y|} \le c_n \,2^{-m}\,.
\end{equation}

We then have,

\begin{theorem}
 Let $T$ be a singular integral operator defined by
\begin{equation}
Tf(x) = {\rm p.v.} \int_{\mathbb{R}^n} k(x,y)\,f(y)\,dy
\end{equation}
such that
\begin{enumerate}
\item[\rm (1)]
for some $c> 0$, $k(x,y)$ satisfies
\[ | k(x,y) - k(x',y)| \le c\,\frac1{|x-y|^n}\,  \omega\Big(\frac{|x - x'|}{|x - y|}\Big)\]
     whenever $x,x'\in Q$ and $y\in (2Q)^c$ for any cube $Q$,  where $\omega(t)$ is a nondecreasing function
     on $(0,\infty)$ such that
     \[\int_0^1 \omega(c_n t)  \, \frac{dt}{t}<\infty\,;\]
     and
\item[\rm (2)] for some $1\le r<\infty$, $T$ is of weak-type $(r,r)$.
\end{enumerate}

Then for  $0 < s \le  1/2$, any cube $Q_0$, and $x\in Q_0$,
\begin{equation}
M^{\sharp}_{0,s,Q_0}(Tf )(x) \le c \sup_{x \in Q, Q \subset Q_0} \inf_{y \in Q} M_rf(y)\,.
\end{equation}

Moreover, if we also have that $T(1)=0$ and
 \[\int_0^1 \omega(c_n t) \ln(1/t)\, \frac{dt}{t}<\infty\,,\]
then
\begin{equation*}
M^{\sharp}_{0,s,Q_0}(Tf)(x) \le  c \sup_{x \in Q, Q \subset Q_0} \inf _{y \in Q} M^{\sharp}_rf(y)\,.
\end{equation*}

In particular, if $Q_0 = \mathbb{R}^n$, then
\[M^{\sharp}_{0,s}(Tf)(x) \le c\, M_rf(x)\,,\quad {\text{and}}\quad
  M^{\sharp}_{0,s}(Tf)(x)  \le c\, M^{\sharp}_rf(x)\,,\]
 respectively.
\end{theorem}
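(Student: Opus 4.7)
Fix $x \in Q_0$ and an arbitrary cube $Q$ with $x \in Q \subset Q_0$. To estimate $M^{\sharp}_{0,s,Q_0}(Tf)(x)$ it suffices to exhibit a constant $c_Q$ such that
$$|\{y \in Q : |Tf(y) - c_Q| > \alpha\}| < s|Q|$$
holds for some $\alpha \le c\,\inf_{y \in Q} M_r f(y)$, and then to take the supremum over $Q$. I will follow the familiar Calder\'on-Zygmund template: decompose $f = f_1 + f_2$ with $f_1 = f\chi_{2Q}$, $f_2 = f\chi_{(2Q)^c}$, and set $c_Q = Tf_2(y_Q)$ for some $y_Q \in Q$ at which the principal-value integral defining $Tf_2$ converges (a set of full measure).

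For the local piece $f_1$, Chebyshev's inequality together with the weak-type $(r,r)$ hypothesis gives
$$\bigl|\{y \in Q: |Tf_1(y)| > \alpha/2\}\bigr| \le c\,\alpha^{-r}\int_{2Q}|f|^r \le c\,\alpha^{-r}|Q|\bigl(\inf_{y\in Q}M_r f(y)\bigr)^r,$$
which is less than $(s/2)|Q|$ once $\alpha \ge c_s \inf_{y\in Q}M_r f(y)$. For the remote piece, I combine the H\"ormander-type smoothness hypothesis with (4.1) and break $(2Q)^c$ into the annuli $2^{m+1}Q \setminus 2^m Q$, $m\ge 1$, to obtain pointwise on $Q$
$$|Tf_2(y) - c_Q| \le c\sum_{m\ge 1}\omega(c_n 2^{-m})\,\frac1{|2^m Q|}\int_{2^{m+1}Q}|f(z)|\,dz \le c\,\inf_{y\in Q}M_r f(y)\sum_{m\ge 1}\omega(c_n 2^{-m}),$$
where the last sum is finite because it is controlled by $\int_0^1\omega(c_n t)\,dt/t < \infty$. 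Combining the two pieces via $|Tf - c_Q| \le |Tf_1| + |Tf_2 - c_Q|$ and taking the supremum over $Q$ proves (4.3); specializing to $Q_0 = \mathbb{R}^n$ gives the first pointwise bound in the last display.

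For the ``moreover'' assertion I exploit the formal cancellation $T(1)=0$ to replace $f$ by $f - c$ for any constant $c$ without changing $Tf$, and I pick $c$ to be an average of $f$ over $2Q$. The local piece is then controlled by $\inf_{y\in Q}M^{\sharp}_r f(y)$ through the standard mean-oscillation estimate $\int_{2Q}|f-c|^r \le c|Q|\,M^{\sharp}_r f(y_0)^r$. The far piece now requires bounding $|2^m Q|^{-1}\int_{2^{m+1}Q}|f - c|\,dz$; writing $|f - c| \le |f - c_{2^{m+1}Q}| + |c_{2^{m+1}Q} - c|$ and telescoping the averages via $|c_{2^{j+1}Q} - c_{2^j Q}| \le c\,M^{\sharp}_r f(y_0)$ over $1\le j\le m$ introduces an extra factor of $m$, so the relevant tail series becomes $\sum_m m\,\omega(c_n 2^{-m})$, finite precisely by the stronger hypothesis $\int_0^1\omega(c_n t)\ln(1/t)\,dt/t<\infty$.

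The step I expect to require the most care is the ``moreover'' case: ensuring that the telescoping bound on consecutive averages transfers cleanly to $M^{\sharp}_r f$ (rather than $M^{\sharp} f$) with a constant independent of $m$, and giving a rigorous meaning to $T(1)=0$ in the class of functions treated here so that the substitution $Tf = T(f-c)$ is legitimate. The local restriction $Q \subset Q_0$ itself plays no essential role in the estimates; it enters only through the outer supremum, since the annuli $2^{m+1}Q$ may exit $Q_0$ without affecting the global quantity $M_r f$ appearing on the right.
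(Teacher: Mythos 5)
Your proof is correct and follows essentially the same Calder\'on--Zygmund template as the paper: split $f = f_1 + f_2$ across $2Q$, bound $Tf_1$ via the weak-type $(r,r)$ hypothesis and Chebyshev, bound $Tf_2$ by decomposing $(2Q)^c$ into dyadic annuli and invoking the H\"ormander modulus, and telescope in the $T(1)=0$ case (producing the extra factor of $m$ that forces the $\log$-Dini assumption). The one minor departure is that you subtract averages of $f$ where the paper subtracts medians $m_f(t,Q)$ to stay consistent with its median-based estimates (4.7)--(4.9) --- both choices work --- and your stated worry about passing from $M^{\sharp}f$ to $M^{\sharp}_r f$ is moot since $M^{\sharp}f \le M^{\sharp}_r f$ pointwise.
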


\begin{proof}
We consider the case when $T(1) =0$ first. Fix a cube $Q_0 \subset \mathbb{R}^n$  and take $x \in Q_0$.
Let $Q \subset Q_0$ be a cube containing $x$ with center $x_Q$ and sidelength $l_Q$.
Let $1/2 \le  t \le  1-s$,  $f_1 =   (f - m_f(t,Q)  ) \indicator{2Q}$, and $f_2 =   (f - m_f(t,Q)   )\indicator{(2Q)^c}$.
Then by the linearity of $T$, $Tf(z)- Tf_2(x_Q)= Tf_1(z)+ Tf_2(z)-Tf_2(x_Q)$ for $z\in Q$.

We claim that there exist constants $c_1,c_2>0$ independent of $f$ and $Q$ such that
\begin{equation}
|\{z \in Q: |Tf_1(z)| >  c_1 \inf_{y \in Q}M^{\sharp}_r f(y)\}| < s\,|Q|\,,
\end{equation}
and
\begin{equation}
 \|Tf_2 - Tf_2(x_Q)  \|_{L^{\infty}(Q)} \le  c_{2} \inf_{y \in Q}M^{\sharp}_r f(y)\,.
\end{equation}

We prove (4.5) first. For any $z \in Q$, by (4.1)
\begin{align}
|Tf_2(z) &- Tf_2(x_Q)|\notag\\
 &\le  \int_{(2Q)^c}|k(z,y) - k(x_Q,y)|\, |f(y) - m_f(t,Q)|\,dy
\notag
\\
&\le c \int_{(2Q)^c} \frac1{|z-y|^n}\, \omega\Big(\frac{|x_Q - z|}{| y-z|}\Big) \, |f(y) - m_f(t,Q)|\,dy
\notag
\\
&= c \sum_{m=1}^{\infty} \int_{2^{m+1}Q\setminus 2^{m}Q}  \frac1{|z-y|^n}\, \omega\Big(\frac{|x_Q - z|}{| y-z|}\Big)\,
|f(y) - m_f(t,Q)|\, dy
\notag
\\
&\le c \sum_{m=1}^{\infty} \omega(c_n/ 2^{m})\, \frac{1}{|2^m Q|}\int_{2^mQ} |f(y) - m_f(t,Q)|\,dy\,.
\end{align}

It readily follows from Proposition 1.1 in \cite{MedContOsc} that for any cube $Q'$,
\[  | m_f (t,Q')-f_{Q'}\,|\le c\,\frac{1}{|Q'|}\,\int_{Q'}  |f(y)-f_{Q'}| \,dy\,,
\]
and consequently, with $c=c_s$,
\begin{align}
\frac{1}{|Q'|}\int_{Q'} &|f(y) - m_f(t,Q')|\, dy\notag\\
 &\le
\frac{1}{|Q'|}\int_{Q'} |f(y) -  f_{Q'}|\, dy + |f_Q- m_f(t,Q')|\notag
\\
&\qquad\qquad \le c\,\frac{1}{|Q'|}\,\int_{Q'}  |f(y)-f_{Q'}| \,dy\notag,
\\
&\qquad\qquad\qquad\qquad \le c  \inf_{y \in Q'}M^{\sharp}f(y)\,.
\end{align}

Also, from (4.7) and the triangle inequality we have
\begin{equation}
 |m_f(t,2 \,Q') - m_f(t, Q') | \le c \, \inf_{y \in Q'}M^{\sharp}f(y)\,.
\end{equation}
Then (4.7) and (4.8) give that
\begin{align}
\int_{2^m Q} &|f(y)  - m_f(t,Q)|\,dy\notag\\
 &\le  \int_{2^m Q} |f(y) - m_f(t,2^m Q)|\, dy
\notag
\\
&\qquad\qquad + \int_{2^m Q} \sum_{j=1}^m |m_f(t,2^j Q) - m_f(t, 2^{j-1}Q)|\, dy
\notag
\\
&\qquad \le  c\, |2^{m} Q|\,    \inf_{y \in Q}M^{\sharp}f(y)
 + c\, |2^{m} Q|\,   \sum_{j=1}^m \inf_{y \in 2^{j-1} Q} M^{\sharp}f(y)
\notag
\\
&\qquad\qquad \le  c\, |2^{m} Q|\,  (1+m) \inf_{y \in Q}M^{\sharp}f(y)\,.
\end{align}

Using (4.9), we   bound (4.6) as
\begin{align*}
|Tf_2(z) - Tf_2(x_Q)| &\le  c\, \Big( \sum_{m=1}^{\infty} (1+m)\, \omega(c_n/2^{m})\Big) \inf_{y \in Q}M^{\sharp}f(y)
\\
&\le c \, \Big(\int_0^1 \omega(c_n t)\ln(1/t)\, \frac{dt}{t}\Big) \inf_{y \in Q}M^{\sharp}f(y)\,,
\end{align*}
and so
\begin{equation*}
 \|Tf_2 - Tf_2(x_Q) \|_{L^{\infty}(Q)} \le  c_{2} \inf_{y \in Q}M^{\sharp}f(y) \le  c_{2} \inf_{y \in Q}M^{\sharp}_rf(y)\,.
\end{equation*}

As for (4.4), since $T$ is of weak-type $(r,r)$, by (4.7)
 and (4.8) we have that for any $\lambda > 0$,
\begin{align}
{\lambda^r} |\{z &\in Q  : |Tf_1(z)| > \lambda\}|\notag\\
  &\le  {c} \int_{2Q}|f(y) - m_f(t,Q)|^r\,dy
\notag
\\
&\qquad \le {c}\int_{2Q}|f(y) - m_f(t,2Q)|^r\,dy
\notag
\\
&\qquad\qquad\qquad  + c\,
 |m_f(t,Q) - m_f(t,2Q)|^r |2Q|
\notag
\\
&\qquad\qquad \le  c \, \inf_{y \in 2Q} M^{\sharp}_r f(y)^r {|Q|}+  c\,   \inf_{y \in Q}M^{\sharp} f(y)^r{|Q|}
\notag
\\
&\qquad\qquad\qquad \le c \,   \inf_{y \in Q}M^{\sharp}_rf(y)^r{|Q|}\,,
\notag
\end{align}
     and (4.4) follows by picking $\lambda =c_1 \inf_{y\in Q} M^\sharp_r f(y)$
for an appropriately chosen $c_1$.

 Then, with $c > \max \{c_1, c_2\}$, (4.4) and (4.5) give
\begin{align*}
|\{z \in Q&: |Tf(z) - Tf_2(x_Q)| > 2c \inf_{y \in Q}M^{\sharp}_r f(y)\}|
\\
&\le |\{z \in Q: |Tf_2(z) - Tf_2(x_Q)| > c_2 \inf_{y \in Q}M^{\sharp}_r f(y)\}|
\\
&\qquad\qquad + |\{z \in Q: |Tf_1(z)| > c_1 \inf_{y \in Q}M^{\sharp}_rf(y)\}|
\\
&<{s}|Q|.
\end{align*}
Whence
\[\inf_{c'} \; \inf  \{\alpha \ge  0: |\{z \in Q: |Tf(z) - {c'}| > \alpha\}| < s|Q| \}\le c \inf_{y \in Q}M^{\sharp}_r f(y),\]
and consequently, since this holds for all $Q\subset Q_0$, $x\in Q$,
\[M^{\sharp}_{0,s,Q_0}Tf(x)\le c \sup_{ {x\in Q,  Q\subset Q_0}} \inf_{y \in Q}M^{\sharp}_rf(y)\,.\]

To prove the case where $T(1) \ne  0$, let $f_1 = f\indicator{2Q}$ and $f_2 = f\indicator{(2Q)^c}$, and proceed as above.
Then, for any $z \in Q$, as in the proof of (4.6),
\begin{align}
|Tf_2(z) - Tf_2(x_Q)| &\le
 c \int_{(2Q)^c}  \frac1{|z-y|^n} \, \omega\Big(\frac{|x_Q - z|}{| y-z|}\Big)  \,|f(y)  |\, dy
\notag
\\
&\le c \sum_{m=1}^{\infty} \omega (c_n/2^m )\, \frac{1}{|2^m Q|}\int_{2^mQ} |f(y)  |\,dy
\notag
\\&\le c \, \Big(\int_0^1 \omega(c_n t) \, \frac{dt}{t}\Big) \inf_{y \in Q}M f(y)
\notag
\\
&\le c    \inf_{y \in Q}M_rf(y)\,.
\end{align}

And, as in the proof of (4.4),
\begin{equation}
{\lambda^r} |\{z \in Q: |Tf_1(z)| > \lambda\}|  \le  {c} \int_{2Q}|f(y) |^r\,dy
\le c  \, \inf_{y \in Q}M_rf(y)^r {|Q|}\,.
\end{equation}
Then as before,
\[M^{\sharp}_{0,s,Q_0}Tf(x)\le c \sup_{ {x\in Q,  Q\subset Q_0}} \inf_{y \in Q}M_rf(y)\,.\]

The proof is thus complete.
\end{proof}

That  $M_r$ is relevant on the right-hand side of (4.3) for all $r$,
 $1\le r <\infty$, is clear from (4.11) above, and   is useful when $T$ is not known to
 be of weak-type $(1,1)$.  Also, there are   operators of weak-type ($1,1)$  where $M_r$ is   necessary on the right-hand side
of (4.10), and hence on the right-hand side of (4.3),  for $1<r<\infty$.
These are the Calder\'on-Zygmund  convolution operators   of Dini type, i.e.,  $k(x) = {\Omega(x')}/{|x|^n}$, $x \ne 0$, where
 $\Omega$ is a function on $S^{n-1}$ that satisfies $\int_{S^{n-1}}\Omega(x')\,dx'=0$ and
 an $L^q$-Dini condition  for some $1\le q\le \infty$, \cite{KurtzW}.
Because of their similarity with the
singular integral operators with kernels satisfying H\"ormander-type conditions
 considered in Theorem 4.3 below, the
analysis of this case is omitted.

Theorem 4.1 is the prototype of the following general principle.
\begin{theorem}
 Let $T$ be a linear operator with the following property:  There exists a mapping $M_T$ with the property that for every fixed cube $Q_0$, for any $Q \subset Q_0$, there exist $x_Q\in Q$ and constants  $c_1,c_2>0$
such that every $f$ in a dense class of functions of the domain of $T$ can be written as $f=f_1+f_2$ so that
\begin{equation}|\{z \in Q: |Tf_1(z)| > c_1 \inf_{y \in Q} M_Tf(y)\}| < s|Q|\,,
\end{equation}
and
\begin{equation}
 \|Tf_2 - Tf_2(x_Q) \|_{L^{\infty}(Q)} \le  c_2 \inf_{y \in Q} M_Tf(y)\,.
\end{equation}

 Then, there exists a constant $c$ independent of $f$ and $Q_0$ such that
\begin{equation*}
M^{\sharp}_{0,s,Q_0} (Tf )(x) \le  c \sup_{x \in Q, Q \subset Q_0} \inf _{y \in Q} M_T f(y)\,.
\end{equation*}

In particular, if $Q_0 = \mathbb{R}^n$,
\begin{equation*}
M^{\sharp}_{0,s} (Tf )(x) \le c\, M_Tf(x)\,.
\end{equation*}
\end{theorem}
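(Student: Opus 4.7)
The plan is to abstract the closing argument of Theorem 4.1, since hypotheses (4.12) and (4.13) are exactly the two ingredients that were proved there for the Calder\'on--Zygmund case. Everything else is a formality.

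First I would fix a cube $Q_0$, a point $x \in Q_0$, and an arbitrary cube $Q$ with $x \in Q \subset Q_0$. For a function $f$ in the dense class, apply the hypothesis to produce the decomposition $f = f_1 + f_2$ and the point $x_Q \in Q$ so that both (4.12) and (4.13) hold. By the linearity of $T$,
\[ Tf(z) - Tf_2(x_Q) = Tf_1(z) + \bigl(Tf_2(z) - Tf_2(x_Q)\bigr), \qquad z \in Q. \]
Set $c = \max(c_1, c_2)$. From (4.13) the set where $|Tf_2(z) - Tf_2(x_Q)| > c \inf_{y \in Q} M_Tf(y)$ has measure zero, while (4.12) bounds the measure of the set where $|Tf_1(z)| > c \inf_{y \in Q} M_Tf(y)$ by $s|Q|$ (possibly strictly). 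Adding, I would obtain
\[ \bigl|\{z \in Q : |Tf(z) - Tf_2(x_Q)| > 2c \inf_{y \in Q} M_Tf(y)\}\bigr| < s|Q|. \]

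Now, taking $c' = Tf_2(x_Q)$ as the trial constant in the definition of the local sharp maximal function and $\alpha = 2c \inf_{y \in Q} M_Tf(y)$, this gives
\[ \inf_{c'}\,\inf\{\alpha \geq 0 : |\{z \in Q : |Tf(z) - c'| > \alpha\}| < s|Q|\} \le 2c \inf_{y \in Q} M_Tf(y). \]
Since $Q$ was an arbitrary subcube of $Q_0$ containing $x$, taking the supremum over such $Q$ yields
\[ M^{\sharp}_{0,s,Q_0}(Tf)(x) \le 2c \sup_{x \in Q,\, Q \subset Q_0} \inf_{y \in Q} M_T f(y), \]
which is the desired local inequality. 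The global case $Q_0 = \mathbb{R}^n$ follows either by letting $Q_0$ exhaust $\mathbb{R}^n$ or, equivalently, by repeating the argument verbatim with $Q$ ranging over all cubes containing $x$; the bound simplifies to $c\, M_Tf(x)$ because the pointwise infimum over $Q$ is trivially dominated by $M_Tf(x)$ itself.

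There is no substantive obstacle here: the point of the theorem is precisely that (4.12) and (4.13) are the only structural features of the Calder\'on--Zygmund argument that matter, and once they are assumed as hypotheses the conclusion is immediate. The only small care I would take is to ensure the strict inequality in the definition of $M^{\sharp}_{0,s,Q_0}$ is preserved, which is why I separated the contribution of $Tf_2$ (a null set, hence harmless) from that of $Tf_1$ (whose measure is strictly less than $s|Q|$).
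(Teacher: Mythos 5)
Your proof is correct and takes the same route the paper indicates: it isolates exactly the final paragraph of the proof of Theorem 4.1 (the two-set measure estimate and the choice of trial constant $c' = Tf_2(x_Q)$), which the paper omits for Theorem 4.2 saying only that the argument ``is similar.'' Your constant bookkeeping with $c = \max(c_1,c_2)$ and the explicit factor $2c$ is in fact a bit cleaner than the paper's $c > \max\{c_1,c_2\}$ followed by a silently relabeled constant, and your remark about why the strict inequality $< s|Q|$ survives (the $Tf_2$ set is null) is the right thing to note.
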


We  exploit Theorem 4.2 -- whose proof is similar to that of Theorem 4.1, and is therefore omitted --
 in the results that follow.

Our first observation is that  singular integral operators with kernels
satisfying  H\"{o}rmander-type conditions similar to the  convolution operators considered by Lorente et al. \cite{Lorente}
satisfy local-type estimates with an appropriate $M_T$. $M_T$ could be, as we noted above, $M_r$, or more generally
$M_A$, where $A$ is a Young function;  we denote by $\overline A$ its conjugate function, given by
\[
\overline A(t)=\sup_{s>0}\, ( st - A(s) )\,.
\]

We then have,
\begin{theorem} Let $T$ be a Calder\'on-Zygmund singular integral operator of weak-type $(1,1)$ such that
 for a Young function $A$, every cube $Q$, and $u,v\in Q$,
\[  \sum^\infty_{m=1} |2^{m+1}Q| \, \| \indicator{2^{m+1}Q\setminus  2^{m}Q} ( k (u, \cdot)- k(v,\cdot)  ) \|_{L^{{A}}(2^{m+1}Q)}\le c_A<\infty\,.\]

Then, with $c$ independent of $x$, $Q_0$, and $f$,
\begin{equation*}
 M^{\sharp}_{0,s,Q_0}  (Tf  )(x) \le c \sup_{{x\in Q,\,  Q \subset Q_0}}\inf_{y \in Q}M_{\overline{A}}f(y)\,.
 \end{equation*}
\end{theorem}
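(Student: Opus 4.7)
The plan is to deduce the result directly from Theorem 4.2 by taking $M_T = M_{\overline{A}}$ and verifying its two hypotheses (4.12) and (4.13). For a cube $Q\subset Q_0$ with center $x_Q$, I would split $f = f_1 + f_2$ in the usual Calder\'on--Zygmund way, where $f_1 = f\indicator{2Q}$ and $f_2 = f\indicator{(2Q)^c}$. Since $T$ is linear and $T(f)(z)-T(f_2)(x_Q) = T(f_1)(z)+(T(f_2)(z)-T(f_2)(x_Q))$ on $Q$, it suffices to control these two pieces separately as in the proof of Theorem 4.1.

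For (4.12), I would use the weak-type $(1,1)$ of $T$ together with Chebyshev: for any $\lambda>0$,
\[
|\{z\in Q : |Tf_1(z)|>\lambda\}| \le \frac{c}{\lambda}\int_{2Q}|f(y)|\,dy \le \frac{c\,|Q|}{\lambda}\inf_{y\in Q} Mf(y).
\]
Since $\overline{A}$ is a Young function, Jensen's inequality applied to the Luxemburg norm gives $\frac{1}{|Q'|}\int_{Q'}|f|\le \overline{A}^{-1}(1)\,\|f\|_{L^{\overline{A}}(Q')}$ for every cube $Q'$, so $Mf(y)\le c\,M_{\overline{A}}f(y)$ pointwise. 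Choosing $\lambda = c_1\inf_{y\in Q}M_{\overline{A}}f(y)$ with $c_1$ sufficiently large then forces the left-hand side below $s|Q|$.

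The main step, and the only place the hypothesis is used, is the verification of (4.13). For $z\in Q$, I would write
\[
|Tf_2(z)-Tf_2(x_Q)| \le \sum_{m=1}^{\infty}\int_{2^{m+1}Q\setminus 2^mQ}|k(z,y)-k(x_Q,y)|\,|f(y)|\,dy,
\]
and apply the generalized H\"older inequality for Orlicz spaces on each annulus (with the measure restricted to $2^{m+1}Q$):
\[
\int_{2^{m+1}Q\setminus 2^mQ}|k(z,y)-k(x_Q,y)|\,|f(y)|\,dy \le 2\,|2^{m+1}Q|\,\bigl\|\indicator{2^{m+1}Q\setminus 2^mQ}(k(z,\cdot)-k(x_Q,\cdot))\bigr\|_{L^A(2^{m+1}Q)}\,\|f\|_{L^{\overline{A}}(2^{m+1}Q)}.
\]
Since $Q\subset 2^{m+1}Q$ contains every $y\in Q$, each Luxemburg norm $\|f\|_{L^{\overline{A}}(2^{m+1}Q)}$ is bounded by $\inf_{y\in Q}M_{\overline{A}}f(y)$, which is independent of $m$. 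The standing hypothesis then sums the remaining factors:
\[
\sum_{m=1}^{\infty}|2^{m+1}Q|\,\bigl\|\indicator{2^{m+1}Q\setminus 2^mQ}(k(z,\cdot)-k(x_Q,\cdot))\bigr\|_{L^A(2^{m+1}Q)} \le c_A,
\]
yielding $\|Tf_2-Tf_2(x_Q)\|_{L^{\infty}(Q)}\le c_2\inf_{y\in Q}M_{\overline{A}}f(y)$ with $c_2=2c_A$.

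With (4.12) and (4.13) verified, Theorem 4.2 produces the desired bound
\[
M^{\sharp}_{0,s,Q_0}(Tf)(x)\le c\sup_{x\in Q,\;Q\subset Q_0}\inf_{y\in Q}M_{\overline{A}}f(y).
\]
The only real technical point is handling the Orlicz H\"older step cleanly, since the kernel difference lives only on the annulus while the Luxemburg norm is naturally defined on the full cube $2^{m+1}Q$; the hypothesis as stated is already tailored to this normalization, so the sum collapses immediately once H\"older is applied.
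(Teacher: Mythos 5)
Your proposal is correct and follows essentially the same route as the paper: split $f = f\indicator{2Q} + f\indicator{(2Q)^c}$, use the weak-type $(1,1)$ bound and $M\lesssim M_{\overline A}$ to get (4.12), apply the Orlicz--H\"older inequality on dyadic annuli together with the standing kernel hypothesis to get (4.13), and then invoke Theorem 4.2 with $M_T = M_{\overline A}$. The only (cosmetic) difference is that the paper estimates $|Tf_2(u)-Tf_2(v)|$ for arbitrary $u,v\in Q$ rather than fixing $v=x_Q$, which is the same content; your explicit Jensen step for $Mf\le c\,M_{\overline A}f$ is in fact slightly more careful than the paper's terse ``$Mf(y)\le M_{\overline A}f(y)$.''
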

\begin{proof}
Let $f=f_1+f_2$, where $f_1=f \indicator{2Q}$. Then, since $T$ is of weak-type $(1,1)$, as in the proof of (4.11),
\begin{equation*}
{\lambda} |\{z \in Q: |Tf_1(z)| > \lambda\}|  \le  {c}  \int_{2Q}|f(y) |\,dy
\le c \,    \inf_{y \in Q}Mf(y)\,{|Q|}\,,
\end{equation*}
and since $Mf(y)\le M_{\overline{A}}f(y)$ for all $y$, (4.12) holds for an appropriately chosen $c_1$.

Next,
\begin{align*} \int_{\R^n\setminus 2Q} |k(u,y) - k(v,y)|& \,|f(y)|\,dy\\
&= \sum^\infty_{m=1} \int_{2^{m+1}Q\setminus  2^{m}Q}  | k (u,y)- k(v,y)|\, |  f (y)|\, dy
\end{align*}
is bounded using H\"older's inequality  for $A$ and $\overline A$ by
\begin{align*} 2 \sum^\infty_{m=1} |2^{m+1}Q| \, \| \indicator{2^{m+1}Q\setminus  2^{m}Q} ( k (u, \cdot)&- k(v,\cdot) )  \|_{L^{{A}}(2^{m+1}Q)}\,
\|  f\|_{L^{\overline{A}}(2^{m+1}Q)}\\
&\le 2\,c_A\, \inf_{y\in Q} M_{\overline{A}}(f)\,.
\end{align*}
Hence
\begin{equation*}
|Tf_2(u)-Tf_2(v)|\
\le    2\, c_A \inf_{y\in Q} M_{\overline{A}} f(y)\,,
\end{equation*}
and therefore (4.13) holds  with  $c_2=2\, c_A$.
 The conclusion then follows from Theorem 4.2 with $M_T=M_{\overline A}\,$.
\end{proof}

The idea of the proof is essentially that of \cite{KurtzW} and \cite{Lorente}, where $T$ is assumed to be
of convolution type.  In that case, if $k$
satisfies the $L^{A}$-H\"{o}rmander condition for any Young function $A$,
 it also satisfies the usual   $L^{1}$-H\"{o}rmander condition,
and $T$ is of weak-type $(1, 1)$.

Finally, we consider the integral operators with  homogeneous kernels  defined    as follows \cite{RiverUrci}.
If $A_1, \ldots, A_m$ are invertible matrices such that $A_{k}-A_{k'}$ is invertible
for $k\ne k'$, $1\le k,k'\le m$, and , $\alpha_i > 0$ for all $i$ and  $\alpha_1 + \cdots + \alpha_m = n $,
then
\begin{equation}
T  f(x) = \int_{\R^n} |x - A_1y|^{-\alpha_1} \cdots |x - A_m y|^{-\alpha_m}f(y)\,dy\,.
\end{equation}

For these operators we have,
\begin{theorem}
For $T$ defined as in {\rm{(4.14)}}, any cube $Q_0 \subset \R^n$, and $x\in Q_0$, we have
\begin{equation*}
M^{\sharp}_{0,s,Q_0}  (Tf )(x)  \le c \sum_{i=1}^m \sup_{ {x\in Q \,,Q \subset Q_0}}\inf_{y \in  {Q}} M
f(A_i^{-1}y)
\,.
\end{equation*}
\end{theorem}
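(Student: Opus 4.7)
\end{theorem}

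The plan is to mimic the proof of Theorem 4.1 directly rather than invoking Theorem 4.2, because the desired right-hand side is $\sum_{i=1}^m\sup_{x\in Q,\,Q\subset Q_0}\inf_{y\in Q}Mf(A_i^{-1}y)$, whereas Theorem 4.2 with $M_Tf(x)=\sum_i Mf(A_i^{-1}x)$ would only yield the strictly larger (and weaker) $\sup_{x\in Q,\,Q\subset Q_0}\inf_{y\in Q}\sum_iMf(A_i^{-1}y)$. Fix a cube $Q\subset Q_0$ with $x\in Q$, center $x_Q$, and sidelength $l_Q$, and set
\[
E_Q=\bigcup_{i=1}^m A_i^{-1}(2Q),\qquad f_1=f\indicator{E_Q},\qquad f_2=f\indicator{\R^n\setminus E_Q}\,.
\]
Using $c_Q=Tf_2(x_Q)$ in the infimum that defines $M^{\sharp}_{0,s,Q_0}$, the goal is to prove, with constants $c_1,c_2$ independent of $Q$ and $f$,
\begin{align*}
&\big|\{z\in Q:|Tf_1(z)|>c_1\sum_i\inf_{y\in Q}Mf(A_i^{-1}y)\}\big|<s|Q|\,,\\
&\|Tf_2-Tf_2(x_Q)\|_{L^\infty(Q)}\le c_2\sum_i\inf_{y\in Q}Mf(A_i^{-1}y)\,.
\end{align*}
Combining these, taking $\sup$ over $Q\ni x,\,Q\subset Q_0$, and applying $\sup_Q\sum_i(\cdot)\le\sum_i\sup_Q(\cdot)$ then yields the theorem.

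For the weak-type bound on $Tf_1$, I invoke that $T$ is of weak-type $(1,1)$, as proved in \cite{RiverUrci}. Consequently $\lambda\,|\{z\in Q:|Tf_1(z)|>\lambda\}|\le c\int_{E_Q}|f|\le c\sum_i\int_{A_i^{-1}(2Q)}|f(y)|\,dy$. Changing variables $u=A_iy$ and comparing the parallelepiped $A_i^{-1}(2Q)$ with an enclosing cube of comparable volume that contains $A_i^{-1}y_*$ for every $y_*\in Q$ bounds each summand by $c|Q|\inf_{y\in Q}Mf(A_i^{-1}y)$. Choosing $\lambda=c_1\sum_i\inf_{y\in Q}Mf(A_i^{-1}y)$ with $c_1$ sufficiently large then delivers the claim.

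For the $L^{\infty}$ estimate on $Tf_2-Tf_2(x_Q)$, differentiating the kernel $K(z,y)=\prod_j|z-A_jy|^{-\alpha_j}$ gives $|\nabla_zK(z,y)|\le K(z,y)\sum_i\alpha_i|z-A_iy|^{-1}$, and for $y\notin E_Q$ and $\xi\in Q$ every distance $|\xi-A_jy|$ is comparable to $|x_Q-A_jy|$ (with dimensional constants). Hence, by the mean-value theorem,
\[
|K(z,y)-K(x_Q,y)|\le c\,|z-x_Q|\,K(x_Q,y)\sum_i|x_Q-A_iy|^{-1}\,.
\]
The crucial pointwise inequality I will use is
\[
K(x_Q,y)\,|x_Q-A_iy|^{-1}\le\sum_{j=1}^m|x_Q-A_jy|^{-n-1}\,,
\]
which follows from $\sum_j\alpha_j=n$: with $r_*=\min_j|x_Q-A_jy|$ one has $K(x_Q,y)\le r_*^{-n}$, and since $|x_Q-A_iy|\ge r_*$ we get $K(x_Q,y)|x_Q-A_iy|^{-1}\le r_*^{-n-1}\le\sum_j|x_Q-A_jy|^{-n-1}$. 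Each resulting term $\int_{\R^n\setminus E_Q}|x_Q-A_jy|^{-n-1}|f(y)|\,dy$ is handled by the change of variables $u=A_jy$ (noting $A_j(\R^n\setminus E_Q)\subset\R^n\setminus 2Q$ since $2Q=A_jA_j^{-1}(2Q)\subset A_j(E_Q)$), followed by dyadic decomposition into annuli $2^{k+1}Q\setminus 2^kQ$, $k\ge 1$; the resulting geometric series converges to produce $cl_Q^{-1}\inf_{y\in Q}Mf(A_j^{-1}y)$. Multiplying by $|z-x_Q|\le c l_Q$ then completes the $L^{\infty}$ bound. The main technical obstacle is controlling the $m$ simultaneous singularities of $K$ by an estimate independent of the mutual geometry of the matrices $A_i$, which is precisely resolved by the critical bound $K(x,y)\le(\min_j|x-A_jy|)^{-n}$ coupled with a change-of-variables argument tailored separately to each singularity.
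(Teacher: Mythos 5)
Your proof is correct and follows the same Calder\'on--Zygmund splitting that the paper uses: decompose $f$ according to whether its argument lies in $\bigcup_i A_i^{-1}(\lambda Q)$, control $Tf_1$ by the weak-$(1,1)$ estimate of Riveros--Urciuolo plus a change of variables, and bound the oscillation of $Tf_2$ over $Q$ by a kernel-regularity estimate. The paper simply cites \cite{RiverUrci} for both the weak-type control and the $L^\infty$ bound on $Tf_2 - Tf_2(x_Q)$, whereas you supply a self-contained argument via the mean value theorem and the pointwise bound $K(x_Q,y)\,|x_Q - A_iy|^{-1} \le \sum_{j}|x_Q - A_jy|^{-n-1}$ (a clean consequence of $\sum_j \alpha_j = n$), together with the observation that $M(f\circ A_j^{-1})(y_*) \le c\,Mf(A_j^{-1}y_*)$ since $A_j^{-1}$ sends cubes to parallelepipeds of uniformly comparable eccentricity. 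Your preliminary remark is also apt: because the established estimates on $Tf_1$ and $Tf_2 - Tf_2(x_Q)$ are in terms of $\sum_i \inf_{y\in Q}Mf(A_i^{-1}y)$ rather than $\inf_{y\in Q}\sum_i Mf(A_i^{-1}y)$, one must run the proof of Theorem 4.2 with this summed-infimum quantity (as the paper implicitly does by ``as indicated in Theorem 4.2'') to reach the stated $\sum_i \sup_Q \inf_{y\in Q}$ form; a naive application of Theorem 4.2 with $M_Tf = \sum_i Mf\circ A_i^{-1}$ would only yield the incomparable $\sup_Q\inf_{y\in Q}\sum_i$ bound.
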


\begin{proof}
Let $Q_0$ be a fixed cube and $x\in Q_0$.
As in the proof of Theorem 2.1 in  \cite {RiverUrci},  for a cube $Q $ containing $x$
for an appropriate dimensional constant  $\lambda$
and $1\le i\le m$, let $ Q_i=A_i^{-1}(\lambda Q)$,
and put
\[ f_1(y) =f (y)\indicator{\bigcup_{i=1}^m  Q_i}(y)\,,\]
 and $f_2=f-f_1$.

First, by Theorem 3.2 in \cite{RiverUrci}, $T$ is of weak-type $(1,1)$.
Moreover,  since   by the inequality following (2.2) in the proof of Theorem 2.1 in \cite{RiverUrci},
for all integrable functions $g$,
\[  \sum_{i=1}^m  \int_{  Q_i}|g(y)|\,dy
\le c\, \Big(\sum_{i=1}^m \inf_{y\in Q} M  g(A_i^{-1}y)\Big) \,|Q|\,,
\]
taking  $\lambda > ({c}/{s}){\sum_{i=1}^m \inf_{y \in  {Q}}Mf(A_i^{-1}y)}$,
it follows that
\begin{equation*}
|\{y \in Q: |Tf_1(y)| > \lambda \}| < s|Q|.
\end{equation*}

And, concerning the $Tf_2$ term, for any $y \in Q$ we have
\[|Tf_2(y) - Tf_2(x_Q)| \le \int_{\R^n\setminus \bigcup_{1 \le i \le m} {Q}_i } |k(y,z) - k(x_Q,z)|\,|f(z)|\, dz\,.\]

Now, by breaking up $\R^n\setminus \bigcup_{1 \le i \le m} {Q}_i   $ into regions as   in (2.4)
in the proof of Theorem 2.1 of \cite{RiverUrci}, we have  that
\[ |Tf_2(y) - Tf_2(x_Q)|\le   c\,   \sum_{i=1}^m \inf_{y \in {Q}}Mf(A_i^{-1}y)\,.
\]

The  conclusion then follows as indicated in Theorem  4.2.
\end{proof}

To obtain   local weighted estimates one could of course
 use the full strength of the result in \cite{RiverUrci}, namely,
\[M^\sharp(|Tf|^\delta)(x)^{1/\delta}\le c\, \sum_{i=1}^m M  f(A_i^{-1} x)\,,
\]
where $0<\delta<1$.

\section{Local weighted estimates}
The  local estimates in the previous section can be used to express the local integral control of
$Tf$ in terms of $M_Tf$. Specifically, we have
\begin{theorem}
Let $T, M_T$ be  operators    such that the conditions of Theorem 4.2 hold.
 Then for any $\Phi$ satisfying condition $C$, cube $Q_0$, and weights $(w,v)$ satisfying condition $F$,
\[\int_{Q_0} \Phi  (|Tf(x) - m_{Tf}(t,Q_0)|  ) \,w(x)\,dx \le c \int_{Q_0}   \Phi  ( M_T f(x) )\, v(x)\, dx\,,\]
where $c$ is independent of $Q_0$ and $f$.

Furthermore, if $\lim_{Q_0 \to \R^n} m_{Tf}(t,Q_0) = 0$,
\[\int_{\mathbb{R}^n}\Phi    (|Tf (x)|   )\, w(x)\,dx \le c \int_{\mathbb{R}^n}\ \Phi  ( M_T f(x)  )\, v(x)\,dx\,.\]
\end{theorem}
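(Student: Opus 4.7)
The plan is a straightforward two-step composition: first apply Theorem 3.1 to the function $Tf$, then apply the pointwise bound of Theorem 4.2 to dominate the resulting local sharp maximal function by $M_T f$. The parameters $s$ and $t$ are the ones fixed after Definition 3.1 (satisfying $0 < s \le 1/2$, $1/2 < t < 1-s$, and the doubling constraint (3.2)); these are exactly the parameters appearing in the statements of both Theorems 3.1 and 4.2, so the two estimates chain together without further work.

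Concretely, I would first apply Theorem 3.1 with $Tf$ in place of $f$, which is legitimate since $Tf$ is measurable on $Q_0$, the weights $(w,v)$ satisfy condition $F$, and $\Phi$ satisfies condition $C$. This yields
\[
\int_{Q_0}\Phi\bigl(|Tf(x) - m_{Tf}(t,Q_0)|\bigr)\,w(x)\,dx \le c \int_{Q_0}\Phi\bigl(M^{\sharp}_{0,s,Q_0}(Tf)(x)\bigr)\,v(x)\,dx.
\]
Next, Theorem 4.2 provides the pointwise estimate
\[
M^{\sharp}_{0,s,Q_0}(Tf)(x) \le c \sup_{x \in Q,\, Q \subset Q_0}\inf_{y \in Q} M_T f(y) \le c\, M_T f(x),
\]
where the last inequality uses only that $x \in Q$ forces $\inf_{y \in Q} M_T f(y) \le M_T f(x)$, so the supremum over all such cubes still does not exceed $M_T f(x)$. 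Since $\Phi$ is increasing with $\Phi(2t)\le c_0\Phi(t)$, finitely many iterations of doubling absorb the constant $c$ inside $\Phi$, giving $\Phi(M^{\sharp}_{0,s,Q_0}(Tf)(x)) \le c'\,\Phi(M_T f(x))$ with $c'$ depending only on $c$ and $c_0$. Substituting this into the previous display yields the local inequality, and the constant is manifestly independent of $Q_0$ and $f$.

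For the global inequality, I would let $Q_0$ run through an increasing sequence of cubes exhausting $\R^n$. Under the hypothesis $\lim_{Q_0\to\R^n} m_{Tf}(t,Q_0) = 0$, we have $|Tf(x) - m_{Tf}(t,Q_0)| \to |Tf(x)|$ a.e., so Fatou's lemma applied to the nonnegative integrands $\Phi(|Tf - m_{Tf}(t,Q_0)|)w$ on the left, combined with monotone convergence on the right (the integrands $\Phi(M_T f)v\indicator{Q_0}$ increase to $\Phi(M_Tf)v$), delivers the global bound. There is essentially no obstacle here: the real content sits in Theorems 3.1 and 4.2, and the present theorem is the transparent consequence of combining them. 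The only thing to double-check is that the parameters $s,t$ chosen for Theorem 3.1 (to satisfy (3.2)) lie in the ranges permitted by Theorem 4.2, which is the case since the latter's hypotheses only require $0 < s \le 1/2$.
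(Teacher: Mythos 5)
Your proposal is correct and follows the paper's own proof exactly: the paper likewise states that Theorem 5.1 "follows immediately from Theorem 3.1 and Theorem 4.2." You have merely written out the details that the paper leaves implicit, including the observation that $\sup_{x\in Q,\,Q\subset Q_0}\inf_{y\in Q}M_Tf(y)\le M_Tf(x)$, the doubling step to absorb the constant inside $\Phi$, and the Fatou argument for the global statement, all of which are correct and consistent with the parameter conventions fixed after Definition 3.1.
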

\begin{proof}
The proof follows immediately from   Theorem 3.1 and Theorem 4.2. Thus, the result holds with $M_T=M_r$ or $M^\sharp_r$ for singular integral operators, $M_T=M_{\overline A}$ for  singular integral operators with kernels satisfying H\"ormander-type conditions,
and $M_Tf(x)=\sum_{i=1}^m Mf(A_i^{-1}x)$ for integral operators with homogeneous kernels.
\end{proof}

We   discuss now in some detail the case  $M_T=M_r$. Note that, in particular,
Theorem 5.1 (with $r=1$) gives (1.1)  as well as (1.3), which  are then one and the same result.

And, concerning (1.4), we have the following observation.
\begin{theorem} Let $T$ be an operator    such that the conditions of Theorem 4.2 hold with $M_T=M_r$, $1\le r<\infty$.
Then,   for $0<p<r$,
\[M^\sharp_{p,Q_0} (Tf )(x)\le c_p \sup_{x\in Q, Q\subset Q_0}\inf_{y\in Q } M_rf(y)\,,
\]
 with $c_p\to \infty$ as $p\to r$,
and, consequently,
\begin{equation}
M^\sharp_p (Tf )(x)\le c_p\, M_r f(x)\,,
\end{equation}
 with the same $c_p$ as above.

Furthermore,
\begin{equation}
M^\sharp_r  (Tf  )(x)\le c  \, M_{L^r\log L}f(x)\,.
\end{equation}
\end{theorem}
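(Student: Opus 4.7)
The plan is to adapt the proof of Theorem 4.1, replacing its distributional estimate on $Tf_1$ by an $L^p$-bound obtained from Kolmogorov's inequality. Fix $x\in Q_0$ and a cube $Q$ with $x\in Q\subset Q_0$, and split $f=f_1+f_2$ with $f_1 = f\indicator{2Q}$ (or $f_1 = (f-m_f(t,Q))\indicator{2Q}$ when $T(1)=0$). The smoothness-of-the-kernel bound $\|Tf_2-Tf_2(x_Q)\|_{L^\infty(Q)}\le c\,\inf_{y\in Q}M_r f(y)$ was already established inside the proof of Theorem~4.1. Choosing $c = Tf_2(x_Q)$ as the constant in the definition of $M^\sharp_p$ reduces the problem to estimating $\bigl(|Q|^{-1}\int_Q|Tf_1|^p\,dy\bigr)^{1/p}$ uniformly in $Q$.

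For the first inequality, the weak-type $(r,r)$ bound used to derive (4.11) yields, for all $\lambda>0$,
\[|\{z\in Q:|Tf_1(z)|>\lambda\}|\le \frac{c\,|Q|\,A^r}{\lambda^r},\qquad A=\inf_{y\in Q}M_r f(y).\]
The standard Kolmogorov trick -- writing $\int_Q|Tf_1|^p\,dy = p\int_0^\infty \lambda^{p-1}|\{z\in Q:|Tf_1(z)|>\lambda\}|\,d\lambda$, splitting at $\lambda_0=A$, and using the trivial bound $|\{\cdots\}|\le|Q|$ on $[0,A]$ -- produces
\[\Bigl(\frac{1}{|Q|}\int_Q|Tf_1|^p\,dy\Bigr)^{1/p}\le c_p\,A,\qquad c_p = O\!\bigl((r/(r-p))^{1/p}\bigr),\]
which is finite for $0<p<r$ and diverges as $p\to r$. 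Combining with the $L^\infty$ control of $Tf_2-Tf_2(x_Q)$ and taking the supremum over $Q$ gives the first displayed bound; the global version with $Q_0=\R^n$ is immediate.

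At the endpoint $p=r$ the splitting above fails since the tail integral $\int_A^\infty \lambda^{-1}\,d\lambda$ diverges logarithmically. The replacement is an Orlicz-scale Kolmogorov estimate: for $T$ of weak-type $(r,r)$,
\[\Bigl(\frac{1}{|Q|}\int_Q|Tf_1|^r\,dy\Bigr)^{1/r}\le c\,\|f\|_{L^r\log L,\,2Q}\le c\,\inf_{y\in Q}M_{L^r\log L}f(y).\]
One derives this by decomposing $f_1=\sum_k f_1\indicator{\{2^k\le|f_1|<2^{k+1}\}}$, applying weak $(r,r)$ to each dyadic layer, and summing the $L^r$ norms of the pieces over $Q$; the extra logarithmic factor is exactly what makes the resulting geometric series converge. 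Combining this with the $L^\infty$ bound on $Tf_2-Tf_2(x_Q)$ and taking the infimum over constants yields the second displayed inequality.

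The main obstacle is precisely this endpoint upgrade. Passing from weak $(r,r)$ to a local $L^r$ bound forces the $L^r\log L$ Orlicz norm on the input side, and the constant in the Orlicz-Kolmogorov step must be shown to be uniform in $Q$ in order to pass to a pointwise estimate for $M^\sharp_r(Tf)$. At $r=1$ this specializes to the classical $M_{L\log L}$ estimate alluded to in (1.9), which is known to be sharp, so the logarithmic factor cannot be dropped in general.
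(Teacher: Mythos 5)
Your argument takes a genuinely different route from the paper's.  The paper does not go back to the decomposition $f=f_1+f_2$ at all: it applies Theorem~5.1 with $w=v=1$ and $\Phi(t)=t^p$, obtaining
\[
\frac1{|Q|}\int_Q |Tf(y)-m_{Tf}(t,Q)|^p\,dy \le c\,\frac1{|Q|}\int_Q M_rf(y)^p\,dy ,
\]
and then, for $0<p<r$, uses the Coifman--Rochberg observation that $(M_rf)^p=(M(|f|^r))^{p/r}\in A_1$ to replace the average by $\inf_Q(M_rf)^p$.  For $p=r$ it takes the supremum over cubes to get $M^\sharp_r(Tf)\le c\,M_r(M_rf)$ and finishes by citing the pointwise comparability $M_r\circ M_r\sim M_{L^r\log L}$ from \cite{Compo}.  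This is shorter than your direct attack and -- more importantly -- it only uses the abstract hypotheses of Theorem~4.2 (a single-level distributional inequality on $Tf_1$), which is exactly what Theorem~5.2 assumes.

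Your route is fine for $0<p<r$ in the Calder\'on--Zygmund setting: Kolmogorov's inequality applied to the full weak $(r,r)$ ladder gives the $(|Q|^{-1}\int_Q|Tf_1|^p)^{1/p}\le c_p\inf_QM_rf$ bound with $c_p\sim(r/(r-p))^{1/p}$, and the $L^\infty$ estimate on $Tf_2-Tf_2(x_Q)$ closes the argument.  Note, however, that this step needs more than the hypotheses of Theorem~4.2: (4.12) only gives the level-set estimate at a single threshold, whereas Kolmogorov requires the estimate at every $\lambda$.  This is available for the operators of Theorem~4.1 (which are assumed weak $(r,r)$) but is not literally part of the hypothesis of Theorem~5.2; the paper's proof via Theorem~5.1 avoids this issue.

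The endpoint $p=r$ is where there is a genuine gap.  The inequality
\[
\Bigl(\frac{1}{|Q|}\int_Q|Tf_1|^r\,dy\Bigr)^{1/r}\le c\,\|f\|_{L^r\log L,\,2Q}
\]
does \emph{not} follow from weak type $(r,r)$ alone, and the dyadic-layer argument you outline does not produce it: weak $(r,r)$ controls only the weak-$L^r$ quasinorm of each $Tf_1\indicator{\{2^k\le|f_1|<2^{k+1}\}}$, not its $L^r(Q)$ norm, so there are no ``$L^r$ norms of the pieces'' to sum -- the layer-cake integral $\int_{\lambda_0}^\infty\lambda^{r-1}\cdot\lambda^{-r}\,d\lambda$ diverges for each piece exactly as it did before the decomposition.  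The standard way to obtain such a local $L^r\log L\to L^r$ bound is Yano's extrapolation, which uses weak type $(r,r)$ \emph{together} with a strong type $(p_0,p_0)$ bound for some $p_0>r$; Calder\'on--Zygmund operators satisfy this, but you would have to invoke it.  (A quick way to see the lemma is false for general weak $(r,r)$ operators: on $(0,1)$ take $Tf=\bigl(\int_0^1 f\bigr)h$ with $h\in L^{1,\infty}\setminus L^1_{\mathrm{loc}}$ near $0$; this is weak $(1,1)$, yet $\int_Q|T\indicator{2Q}|=\infty$ for $Q$ touching the origin.)  The paper's route via $M^\sharp_r(Tf)\le c\,M_r(M_rf)\sim c\,M_{L^r\log L}f$ sidesteps the issue entirely.
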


\begin{proof}
Fix a cube $Q_0$ and $x\in Q_0$. By Theorem 5.1 with $w=v=1$, $\Phi(t)=t^p$, and $0<p<r$, since $(M_rf)^p = (M(|f|^r))^{p/r}\in A_1$, for a cube
$Q\subset Q_0$ containing $x$ we have
\begin{align*} \frac1{|Q |} \int_{Q } |Tf(y) &- m_{Tf}(t,Q )|^p \,dy\\
& \le c\, \frac1{|Q |} \int_{Q  }   M_rf(y)^p \, dy
\le c_p \inf_{y\in Q } M_rf(y)^p \,.
\end{align*}
Therefore taking the supremum over $Q \subset Q_0$ containing $x$, it follows that
\[M^\sharp_{p,Q_0} (Tf )(x)\le c_p  \sup_{x\in Q, Q\subset Q_0}\inf_{y\in Q } M_rf(y)
\]
where $c_p\to \infty$ as $p\to r$.
Furthermore,  taking the supremum over all cubes $Q_0$ containing $x$ it readily follows that
\[ M^\sharp_p   (Tf   )(x) \le c_p\, M_r f (x)\]
with the same $c_p$ as before.
This gives (5.1).

Next, fix a cube $Q_0$ and $x\in Q_0$. By Theorem 5.1 with $w=v=1$ and $\Phi(t)=t^r$,
for a cube $Q\subset Q_0$ containing $x$ we have
\[ \frac1{|Q |} \int_{Q } |Tf(y) - m_{Tf}(t,Q )|^r \,dy \le c \,\frac1{|Q |} \int_{Q }   M_rf(y)^r \, dy\,.\]
Therefore, taking the supremum over $Q \subset Q_0$ containing $x$ it follows that
\[ M^\sharp_{r, Q_0}(Tf)(x)\le  c \,\sup_{x\in Q,Q\subset Q_0}\Big( \frac1{|Q |} \int_{Q }   M_rf(y)^r \, dy\Big)^{1/r}\,.
\]
Furthermore,   taking the supremum over those cubes $Q_0$ containing $x$ it readily follows that
\[ M^\sharp_r (Tf)(x)\le c\, M_r (M_r(f) )(x)\]
and   since $M_r \circ M_r$ is pointwise comparable to
the maximal operator $M_{L^r \log L}$ \cite{Compo},
(5.2) follows.
\end{proof}

Of course, it is of interest to remove the maximal function  on the right-hand side of Theorem 5.1.
The answer is   precise  for $A_p$ weights. Given a Young function $\Phi$ such that it and its
conjugate $\overline \Phi$ satisfy the $\Delta_2$ condition, recall that the upper index $u_\Phi$ of
 $L^\Phi$ is given by
 \[ u_\Phi=\lim_{s\to 0^+} - \, \frac{\ln h(s)}{\ln s}\,,\quad h(s)=\sup_{t>0}\frac{\Phi^{-1}(t)}{\Phi^{-1}(st)}\,.
\]
Then     the integral inequality
\begin{equation}
\int_{\R^n} \Phi (Mf(x) )\,w(x)\,dx\le c\int_{\R^n} \Phi(|f(x)|)\,w(x)\,dx
\end{equation}
holds if and only if $w \in A_p$ where  $p=1/u_\Phi$, \cite{KermanTorchinsky}.

Then, (1.2) can be formulated as follows.
\begin{theorem} Let $T$ and $\Phi$ be as in Theorem 5.1
with $M_T=M$.  Then, if $w \in A_p$  where  $p=1/u_\Phi$,
\[\int_{\R^n} \Phi   (|Tf(x)| )\,w(x)\,dx\le c\int_{\R^n} \Phi(|f(x) |)\,w(x)\,dx \,.\]
\end{theorem}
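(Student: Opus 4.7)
The plan is to chain Theorem~5.1 with the weighted Young-function maximal inequality~(5.3). Since $w\in A_p$ with $p=1/u_\Phi$, (5.3) gives
\[\int_{\R^n}\Phi(Mf(x))\,w(x)\,dx\le c\int_{\R^n}\Phi(|f(x)|)\,w(x)\,dx,\]
so it suffices to establish the ``Coifman--Fefferman'' half
\[\int_{\R^n}\Phi(|Tf(x)|)\,w(x)\,dx\le c\int_{\R^n}\Phi(Mf(x))\,w(x)\,dx,\]
which is exactly the $M_T=M$ conclusion of Theorem~5.1 applied with $v=w$.

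To invoke Theorem~5.1 I must verify its hypotheses. First, the pair $(w,w)$ has to satisfy condition~$F$: as recorded in Section~3 (following Fujii), condition~$F$ with $v=w$ is equivalent to $w\in A_\infty$, which holds because $w\in A_p\subset A_\infty$. Second, $\Phi$ must satisfy condition~$C$: since $\Phi$ is a Young function whose upper index $u_\Phi$ enters the statement, both $\Phi$ and $\overline\Phi$ satisfy $\Delta_2$, and $\Delta_2$ is exactly condition~$C$.

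The remaining requirement in the global form of Theorem~5.1 is $\lim_{Q_0\to\R^n}m_{Tf}(t,Q_0)=0$, and arranging this is the main obstacle. I would first prove the inequality on a dense subclass of functions --- say bounded $f$ with compact support --- for which standard Calder\'on--Zygmund theory gives $Tf\in L^s$, and hence in weak-$L^s$, for some $0<s<\infty$. By the remark following Theorem~3.1, the condition $|\{|Tf|>\alpha\}|<\infty$ for every $\alpha>0$ then forces $m_{Tf}(t,Q_0)\to 0$, so Theorem~5.1 applies and yields the desired estimate for such $f$. For a general $f$ with finite right-hand side, I would approximate by truncations $f_N=f\,\indicator{\{|f|\le N\}\cap\{|x|\le N\}}$, pass to the limit on the right by dominated convergence (legal because $\Phi\in\Delta_2$), and control the left-hand side by Fatou's lemma along a subsequence on which $Tf_N\to Tf$ a.e. Once the median-limit condition is secured on a dense class, the rest is assembly; the core of the argument is the single invocation of Theorem~5.1 followed by the standard weighted $L^\Phi$ bound for $M$.
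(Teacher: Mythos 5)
Your proposal is correct and follows exactly the paper's intended route: the paper states Theorem~5.3 as an immediate consequence of Theorem~5.1 (with $M_T=M$, $v=w\in A_\infty$) chained with the Kerman--Torchinsky equivalence~(5.3), offering no separate proof. Your extra care in securing $\lim_{Q_0\to\R^n}m_{Tf}(t,Q_0)=0$ via a dense class and truncation is a sensible refinement of a point the paper leaves implicit, and does not change the substance of the argument.
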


As for  general weights, we have, as described in (1.8),
\begin{theorem} Let $T$ be a Calder\'on-Zygmund singular integral
operator of weak-type $(1,1)$ with $M_T=M$, $\Phi,\Psi$ Young functions such that
\[\int_0^t \frac{\Phi(s)}{s^2}\, {ds} \le c \,\frac{\Psi(t)}{t}\,,\quad t>0\,, \]
 and  $\Phi(t)/t^q$ decreases for some $ 1< q<\infty$, and    $(w,v)$ satisfy condition $F$.
Then
\begin{equation} \int_{\R^n} \Phi (|Tf(x)|  )\,w(x)\,dx \le c\int_{\R^n} \Psi(|f(x)|)\,Mv(x)\,dx\,.
\end{equation}

Moreover, if $w\in C_{q}$ for some  $1<q<\infty$,
\begin{equation} \int_{\R^n} \Phi  (|Tf(x)| )\, w( x )\, dx\le c\, \int_{\R^n}\Psi(|f(x)|) M_{L\log L}(w)( x )\, dx
 \,.
\end{equation}
\end{theorem}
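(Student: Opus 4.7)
The plan is to prove (5.4) by combining Theorem 5.1 with a Pérez-type Orlicz bound for the Hardy--Littlewood maximal function, and then to deduce (5.5) from (5.4) by choosing $v = Mw$. By Theorem 4.1 with $r=1$, a weak-type $(1,1)$ Calderón--Zygmund singular integral satisfies the hypotheses of Theorem 4.2 with $M_T = M$. Hence Theorem 5.1 applied to the pair $(w,v)$ satisfying condition $F$ yields
\[
\int_{\R^n}\Phi(|Tf|)\,w\,dx \le c\int_{\R^n}\Phi(Mf)\,v\,dx,
\]
and (5.4) reduces to the weight-free Pérez-type bound
\[
\int_{\R^n}\Phi(Mf)\,v\,dx \le c\int_{\R^n}\Psi(|f|)\,Mv\,dx
\]
for an arbitrary weight $v$.

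For this bound I would use the Fefferman--Stein type weak inequality
\[
v(\{Mf>\lambda\}) \le \frac{c}{\lambda}\int_{\{|f|>\lambda/2\}}|f(y)|\,Mv(y)\,dy,
\]
valid for any $v$, obtained by splitting $f$ at level $\lambda/2$ and running the standard covering argument for $M$ against the measure $Mv\,dx$. The layer-cake formula together with Fubini then gives
\[
\int_{\R^n}\Phi(Mf)\,v\,dx \le c\int_{\R^n}|f(y)|\,Mv(y)\int_0^{2|f(y)|}\frac{\Phi'(\lambda)}{\lambda}\,d\lambda\,dy.
\]
Integration by parts writes the inner integral as $\Phi(t)/t + \int_0^t \Phi(\lambda)/\lambda^2\,d\lambda$ with $t = 2|f(y)|$; the boundary contribution at $0$ vanishes because $\Phi(t)/t^q$ decreasing with $q>1$ forces $\Phi(t)/t \to 0$ as $t\to 0^+$. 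The bump hypothesis bounds the tail by $c\,\Psi(t)/t$, and the estimate $\Phi(t)/t \le (\log 2)^{-1}\int_t^{2t}\Phi(s)/s^2\,ds$, valid since $\Phi(s)/s$ is increasing, followed by $\Delta_2$ for $\Psi$, shows the first term is of the same order. This yields inner integral $\le c\,\Psi(|f(y)|)/|f(y)|$, and (5.4) follows.

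For (5.5), recall from Section 3 that if $w\in C_q$ for some $q>1$, then the pair $(w,Mw)$ satisfies condition $F$. Applying (5.4) with $v=Mw$ and using the pointwise comparability $M\circ M \approx M_{L\log L}$ (invoked in the proof of Theorem 5.2 from \cite{Compo}) gives
\[
\int_{\R^n}\Phi(|Tf|)\,w\,dx \le c\int_{\R^n}\Psi(|f|)\,M(Mw)\,dx \le c\int_{\R^n}\Psi(|f|)\,M_{L\log L}w\,dx,
\]
as desired. The main obstacle is the bump-condition-to-Young-function manipulation in the second paragraph: tracking the precise decay at $0$ from $\Phi(t)/t^q$ being decreasing, and applying the $\Delta_2$ property of $\Psi$ several times, requires care, but the route is classical and parallels Pérez's arguments in \cite{Perez1994}.
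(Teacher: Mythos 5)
Your proposal is correct, and for (5.5) it coincides exactly with the paper (apply (5.4) with $v = Mw$, use that $C_q$ implies $(w,Mw)\in F$, and $M\circ M\approx M_{L\log L}$). For (5.4), however, you take a genuinely different route for the key Orlicz inequality $\int \Phi(Mf)\,v \le c\int\Psi(|f|)\,Mv$. The paper obtains it abstractly: it records that $M$ is bounded $L^\infty(Mv)\to L^\infty(v)$ and weakly $L^1(Mv)\to L^1(v)$ (Fefferman--Stein), and then invokes a general Orlicz interpolation theorem (Str\"omberg, Torchinsky) for which the upper-type condition $\Phi(t)/t^q$ decreasing and the bump condition $\int_0^t\Phi(s)\,s^{-2}\,ds\le c\,\Psi(t)/t$ are precisely the interpolation hypotheses. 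You instead carry out the interpolation by hand: split $f$ at level $\lambda/2$ to sharpen the Fefferman--Stein weak inequality, layer-cake, Fubini, integrate $\int_0^t\Phi'(\lambda)/\lambda\,d\lambda$ by parts, and control both pieces by $\Psi(t)/t$. This is more elementary and self-contained, at the price of slightly longer bookkeeping; the paper's version hides that work inside a citation.

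Two small remarks worth correcting. First, your justification that the boundary term $\lim_{\lambda\to 0^+}\Phi(\lambda)/\lambda$ vanishes because ``$\Phi(t)/t^q$ decreasing with $q>1$'' is off: decrease of $\Phi(t)/t^q$ bounds $\Phi$ from \emph{below} near $0$ and does not by itself force $\Phi(\lambda)/\lambda\to 0$. What actually forces the limit to be $0$ is the bump condition itself, since $\int_0^t\Phi(s)\,s^{-2}\,ds<\infty$ is incompatible with $\Phi(\lambda)\gtrsim\lambda$ near $0$. In any case the boundary term is nonnegative and enters with a minus sign, so the inequality $\int_0^t\Phi'(\lambda)\lambda^{-1}\,d\lambda\le\Phi(t)/t+\int_0^t\Phi(s)s^{-2}\,ds$ holds regardless, and your estimate survives. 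Second, as stated, your argument does not appear to invoke the hypothesis that $\Phi(t)/t^q$ decreases for some $q>1$ at all; this hypothesis is what makes the paper's abstract interpolation apply, whereas in your direct computation the bump condition plus condition $C$ for $\Psi$ and convexity of $\Phi$ (so $\Phi(s)/s$ increasing) already suffice. That is an honest simplification, not an error, but you should flag it rather than attribute the vanishing boundary term to it.

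Finally, like the paper's own proof, your argument implicitly requires $\lim_{Q_0\to\R^n}m_{Tf}(t,Q_0)=0$ in order to pass from the local estimate of Theorem 3.1/5.1 to the global inequality over $\R^n$; it is worth saying so explicitly, since the global form of Theorem 5.1 carries that proviso.
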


\begin{proof}
Fefferman and Stein observed that for a weight $u$,   $M$ is bounded from $L^\infty(Mu)$ to $L^\infty(u)$ and
  it  maps $L^1(Mu)$ weakly into $L^1(u)$, \cite{FeffermanStein}. Then for $\Phi,\Psi$ as above
a simple interpolation argument \cite{Stromberg, TorInt} gives that
\begin{equation}
\int_{\R^n} \Phi (Mf(x) )\,u(x)\,dx\le c\int_{\R^n} \Psi(|f(x)|)\,Mu(x)\,dx\,.
\end{equation}

Therefore,  if $T$ satisfies the conditions of Theorem 4.2 and $(w,v)$ satisfy condition $F$,
\begin{align} \int_{\R^n} \Phi    (|Tf(x)|  )\,w(x)\,dx &\le \int_{\R^n} \Phi  (Mf(x)  )\,v(x)\,dx
\notag
\\
&\le c\int_{\R^n} \Psi(|f(x)|)\,Mv(x)\,dx\,.
\notag
\end{align}

Moreover, if $w\in C_{q}$ for some  $1<q<\infty$,   since $(w, Mw)$ satisfy condition $F$
and $ M \circ M \sim  M_{L\log L}$, by (5.6),
\[ \int_{\R^n} \Phi   (|Tf(x)|  )\, w( x )\, dx\le c\, \int_{\R^n}\Psi(|f(x)|) M_{L\log L}(w)( x )\, dx
 \,.\]
 This completes the proof.
\end{proof}

In the case of $A_\infty$ weights the result is of interest when
$p=1/u_\Phi$ and $w\in A_q$, with $1<p<q<\infty$. Similarly,
if $p=1/u_\Phi$, the result is of interest when  $w\in C_{q}$ where $q> p$.

Note that Theorem 5.4 implies that for every integer  $k$ there exists a weight $w$ such that  $(w,M_{A_k}w)$
do not satisfy condition $F$, where the $A_k$ are as in Section 3. For the sake of argument suppose that
 $(w,M_{A_k}w)$ satisfy condition $F$ for all weights $w$. Then by (5.4),
\begin{align*}
\int_{\R^n} |Tf(x)|^q\, w(x)\,dx &\le c\, \int_{\R^n} |f(x)|^q\,M( M_{A_k}w)(x)\,dx
\\
&\le c\, \int_{\R^n} |f(x)|^q\,  M_{A_{k+1}}w(x)\,dx
\end{align*}
for all singular integrals $T$ and all $q$, which contradicts (1.11) for sufficiently large $q$.

And, if $M_T=M_r$, $1\le r<\infty$, we have
\begin{theorem}
Let $T$ be an operator that satisfies the conditions of Theorem 4.2 with $M_T=M_r$, $1\le r<\infty$.
Let $\Phi$ satisfy condition $C$ be such that $\Phi(t)/t^p$ increases and $\Phi(t)/t^q$
decreases for some $r<p<q<\infty$. Then if $\Psi(t)=\Phi(t^{1/r})$ is convex, $p=1/u_\Phi$,
and $w\in A_{p/r}$,
\[ \int_{\R^n}   \Phi   ( |Tf(x)| )\,w(x)\, dx \le c\, \int_{\R^n}   \Phi( |f(x)|  ) \,w(x)\, dx\,.
\]
\end{theorem}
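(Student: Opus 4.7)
The plan is to chain Theorem 5.1 with the Kerman--Torchinsky characterization recorded in (5.3), applied not to $\Phi$ but to the auxiliary convex function $\Psi(t)=\Phi(t^{1/r})$. Since $p>r$ we have $w\in A_{p/r}\subset A_\infty$, so the pair $(w,w)$ satisfies condition $F$, and Theorem 5.1 (with $M_T=M_r$ and the given $\Phi$, which satisfies condition $C$) yields the pointwise-to-integral reduction
\[
\int_{\R^n}\Phi(|Tf(x)|)\,w(x)\,dx\le c\int_{\R^n}\Phi(M_r f(x))\,w(x)\,dx.
\]

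Next, I would pass to $M$ via the substitution that defines $M_r$. Writing $M_r f=(M(|f|^r))^{1/r}$ and setting $g=|f|^r$, the definition $\Psi(t)=\Phi(t^{1/r})$ gives $\Phi(M_r f)=\Psi(Mg)$ and $\Phi(|f|)=\Psi(g)$, so the theorem is reduced to the inequality
\[
\int_{\R^n}\Psi(Mg(x))\,w(x)\,dx\le c\int_{\R^n}\Psi(|g(x)|)\,w(x)\,dx.
\]
By the Kerman--Torchinsky result quoted in (5.3), this holds provided $\Psi$ (and $\overline{\Psi}$) satisfy $\Delta_2$ and $w\in A_{1/u_\Psi}$.

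It remains to match these assumptions to the hypotheses of the theorem. From $\Psi^{-1}(t)=\Phi^{-1}(t)^r$ one gets
\[
h_\Psi(s)=\sup_{t>0}\frac{\Psi^{-1}(t)}{\Psi^{-1}(st)}=\sup_{t>0}\Bigl(\frac{\Phi^{-1}(t)}{\Phi^{-1}(st)}\Bigr)^{\!r}=h_\Phi(s)^r,
\]
hence $u_\Psi=-\lim_{s\to 0^+}\ln h_\Psi(s)/\ln s=r\,u_\Phi$, and so $1/u_\Psi=p/r$, exactly the hypothesis on $w$. The $\Delta_2$ conditions for $\Psi$ and $\overline{\Psi}$ follow from the growth assumption on $\Phi$: since $\Phi(t)/t^p$ increases and $\Phi(t)/t^q$ decreases, the change of variable $t\mapsto t^{1/r}$ shows that $\Psi(t)/t^{p/r}$ increases and $\Psi(t)/t^{q/r}$ decreases with $1<p/r<q/r$, which is the standard sufficient condition. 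Convexity of $\Psi$ is assumed outright.

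The main obstacle is bookkeeping with the indices: one must check that the upper index really transforms as $u_\Psi=r\,u_\Phi$ and that the $\Delta_2$ bounds for $\Psi,\overline\Psi$ are genuinely implied by the hypotheses on $\Phi$, so that the weighted norm inequality for $M$ with respect to $\Psi$ is available. Once that identification is established, assembling the two displayed inequalities completes the proof.
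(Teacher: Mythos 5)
Your proposal is correct and follows essentially the same route as the paper: invoke Theorem 5.1 to reduce to an $M_r$-bound, rewrite $\Phi(M_rf)=\Psi(M(|f|^r))$, compute $u_\Psi=r\,u_\Phi$ from $\Psi^{-1}=(\Phi^{-1})^r$, and apply the Kerman--Torchinsky characterization (5.3) with $w\in A_{p/r}=A_{1/u_\Psi}$. The only difference is that you spell out the $\Delta_2$ verification for $\Psi$ and $\overline\Psi$, which the paper leaves implicit.
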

\begin{proof}
By Theorem 5.1,
  for any $\Phi$ satisfying condition $C$ and $w\in A_\infty$,
\[\int_{\R^n} \Phi  (|Tf(x)| ) \,w(x)\,dx \le c \int_{\R^n}   \Phi  ( M_r f(x)  )\, w(x)\, dx\,.\]

Now, since  $\Psi^{-1}(t) =\Phi^{-1}(t)^r  $,
by a simple computation $u_\Psi=r u_\Phi =r/p$. Thus, if $w\in A_{p/r}$, by (5.3),
\begin{align*}
\int_{\R^n}   \Phi   ( M_r f(x)  ) \,w(x)\, dx &=\int_{\R^n}   \Psi  ( M (|f|^r  ) (x)  )\, w(x)\, dx\\
&\le c\, \int_{\R^n}   \Psi( |f(x)|^r  ) \,w(x)\, dx\\
&= c\,\int_{\R^n}   \Phi( |f(x)|  )\, w(x)\, dx\,,
\end{align*}
and we have finished.
\end{proof}
That this result is essentially sharp when $\Phi$ is a power is discussed in \cite{KurtzW}.

Now, observe that (5.5)  is reminiscent of the  estimate
\begin{equation*}
\int_{\R^n} |Tf(y)|^p\, w(y)\,dy \le c \int_{\R^n} |f(y)|^p M_{{A}} w(y)\,dy
\end{equation*}
established by P\'erez,
which   holds for any weight $w$  if $A\in B_p$, i.e.,
the doubling Young function $A$ is such that
\begin{equation*}
\int_c^{\infty} \frac{A(t)}{t^p} \frac{dt}{t} < \infty
\end{equation*}
for some $c > 0$, \cite{Perez1990}.

Since   $B_p$ implies   $B_q$ for $p<q<\infty$,  if $A$ satisfies condition $B_p$,
  $T$ maps continuously $L^p(M_Aw)$ into $L^p(w)$ and
  $L^q(M_A w)$ into $L^q(w)$ for every $q>p$. Then, if $\Phi$ is such that
  $\Phi(t)/t^p$ increases and $\Phi(t)/t^q$ decreases  for some $q>p$, by essentially the same
  interpolation argument as before
\begin{equation*}
\int_{\R^n} \Phi (|Tf(y)| )\, w(y)\,dy \le c \int_{\R^n} \Phi(|f(y)|)\, M_{{A}} w(y)\,dy\,.
\end{equation*}

We now consider  the case $M_T=M_A$.
\begin{theorem} Let $T$ be a singular integral operator as in Theorem 5.1
with $M_T=M_A$,
 weights $(w,v)$ that satisfy property $F$, and  $A\in B_p$. Then, if
$\Phi(t)/t^p$ increases and $\Phi(t)/t^q$ decreases  for some $q>p$,
we have
\begin{equation*} \int_{\R^n} \Phi  (|Tf(y)|)\, w(y)\,dy  \le c \int_{\R^n} \Phi(|f(y)|)\, Mv(y)\,dy\,.
\end{equation*}
\end{theorem}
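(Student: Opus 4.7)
The plan is to execute a two-step reduction that mirrors the proof of Theorem 5.4, but with $M$ replaced throughout by $M_A$.

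\emph{Step 1.} The monotonicity hypotheses force $\Phi$ to be nondecreasing (as $\Phi(t)=t^p\cdot\Phi(t)/t^p$ is the product of two nondecreasing factors) and doubling (from $\Phi(2t)/(2t)^q\le\Phi(t)/t^q$ one gets $\Phi(2t)\le 2^q\Phi(t)$), so $\Phi$ satisfies condition $C$. Since $T$ satisfies the hypotheses of Theorem 4.2 with $M_T=M_A$ and $(w,v)$ satisfies condition $F$, Theorem 5.1 gives directly
\[\int_{\R^n}\Phi(|Tf(y)|)\,w(y)\,dy\le c\int_{\R^n}\Phi(M_Af(y))\,v(y)\,dy.\]

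\emph{Step 2.} It remains to prove the Fefferman--Stein-type inequality
\[\int_{\R^n}\Phi(M_Af(y))\,v(y)\,dy\le c\int_{\R^n}\Phi(|f(y)|)\,Mv(y)\,dy,\]
which is the analog of (5.6) with $M_A$ replacing $M$. I would establish this by interpolating two endpoint estimates for $M_A$ against the pair $(v,Mv)$, exactly in the spirit of the argument of Theorem 5.4. The $L^\infty$ endpoint is immediate: since $A$ is a doubling Young function one has $M_Af(x)\le c\,\|f\|_{L^\infty}$, and $v\le Mv$ a.e., so $M_A\colon L^\infty(Mv)\to L^\infty(v)$. The $L^p$ endpoint $M_A\colon L^p(Mv)\to L^{p,\infty}(v)$ is where the hypothesis $A\in B_p$ enters: cover the level set $\{M_Af>\lambda\}$ by maximal dyadic cubes $\{Q_j\}$ on which $\|f\|_{L^A(Q_j)}>\lambda$, note that $v(Q_j)\le|Q_j|\inf_{Q_j}Mv\le\int_{Q_j}Mv$, and use the tail condition $\int_c^\infty A(t)\,dt/t^{p+1}<\infty$ to convert the Luxemburg bound $\int_{Q_j}A(|f|/\lambda)\ge|Q_j|$ into a bound of the form $\lambda^p|Q_j|\le c\int_{Q_j}|f|^p$; summing over the (essentially) disjoint $Q_j$ delivers the weak-type estimate with respect to $Mv$ on the source side and $v$ on the target side.

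With the two endpoints in hand, the Marcinkiewicz-type interpolation for sublinear operators on Orlicz spaces, together with the growth conditions $\Phi(t)/t^p$ increasing and $\Phi(t)/t^q$ decreasing for some $q>p$, produces the Orlicz estimate of Step~2. Chaining Steps 1 and 2 yields the conclusion.

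\emph{Main obstacle.} The technical heart of the argument is extracting from the abstract $B_p$ hypothesis the weighted weak-type endpoint for $M_A$ against $(v,Mv)$; the covering step is standard in shape, but one must be careful that the tail condition $\int_c^\infty A(t)\,dt/t^{p+1}<\infty$ is exploited in a way that is uniform in the level $\lambda$, and that the behavior of $A(s)/s^{p+1}$ near $s=0$ does not contaminate the change-of-variables computation (which is handled by splitting the $\lambda$-range). Once this endpoint is secured, the remainder of the proof is bookkeeping parallel to the derivation of (5.6) in Theorem 5.4.
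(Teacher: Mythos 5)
Your Step 1 matches the paper exactly. The gap is in Step 2, specifically in the choice of endpoints for the interpolation. You propose to interpolate between the weak-type estimate $M_A\colon L^p(Mv)\to L^{p,\infty}(v)$ and the strong $L^\infty$ endpoint, and then conclude the strong Orlicz bound $\int\Phi(M_Af)\,v\le c\int\Phi(|f|)\,Mv$ using only the hypotheses that $\Phi(t)/t^p$ increases and $\Phi(t)/t^q$ decreases for some $q>p$. This cannot work. The condition $\Phi(t)/t^p$ increasing only guarantees that the lower Orlicz index of $\Phi$ is $\ge p$, not strictly greater than $p$; a Marcinkiewicz-type interpolation between a weak $(p,p)$ endpoint and a strong endpoint yields the Orlicz bound only for $\Phi$ whose lower index is strictly above $p$. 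Concretely, $\Phi(t)=t^p$ satisfies your hypotheses, and for it the claimed conclusion of Step 2 is precisely the strong two-weight inequality $\int (M_Af)^p\,v\le c\int|f|^p\,Mv$ — which no amount of weak $(p,p)$ plus strong $(\infty,\infty)$ interpolation can produce. (Compare: $M$ is weak $(1,1)$ and strong $(\infty,\infty)$, yet $\int Mf\not\le c\int|f|$.)

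The paper avoids this by invoking the \emph{strong}-type two-weight result directly: by Theorem 1.7 of P\'erez [Perez1990], $A\in B_p$ if and only if $\int (M_Af)^p\,v\le c\int|f|^p\,Mv$ for all weights $v$. Combined with the nesting $B_p\subset B_q$ for $q>p$ (noted just before the statement), $M_A$ is bounded from $L^p(Mv)$ to $L^p(v)$ and from $L^q(Mv)$ to $L^q(v)$, both strongly. Interpolating between two strong endpoints with the hypotheses $\Phi(t)/t^p$ increasing and $\Phi(t)/t^q$ decreasing is the correct use of the Orlicz interpolation theorem (as in the proof of Theorem 5.4), and it does yield the conclusion including the endpoint power $\Phi(t)=t^p$. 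To repair your argument you would need either to cite the strong-type P\'erez result as the paper does, or to strengthen your covering argument so as to derive the strong, rather than weak, $L^p(Mv)\to L^p(v)$ bound for $M_A$ — which is, in fact, what the proof of Theorem 1.7 in [Perez1990] accomplishes.
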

\begin{proof}
First, by Theorem 5.1,
\begin{equation}
\int_{\R^n}\Phi   (|Tf(x)|  )\,w(x)\, dx\le c\int_{\R^n}\Phi  (M_Af(x)  )\,v(x)\, dx\,.
\end{equation}

Next, recall that by Theorem 1.7 in \cite{Perez1990}, $A\in B_p$ if and only if for all weights $v$,
\[\int_{\R^n}M_Af(x)^p\,v(x)\, dx \leq c\int_{\R^n}|f(x)|^p\,Mv(x)\, dx\,,
\]

Now, $M_A$ maps continuously $L^p(Mv)$ into $L^p(v)$ and
  $L^q(Mv)$ into $L^q(v)$ for every $q>p$. Then if $\Phi$ is such that
  $\Phi(t)/t^p$ increases and $\Phi(t)/t^q$ decreases  for some $q>p$,
  interpolating we have
\begin{equation}
\int_{\R^n} \Phi (M_Af(y) )\, v(y)\,dy \le c \int_{\R^n} \Phi(|f(y)|)\, Mv(y)\,dy\,.
\end{equation}

The conclusion follows combining  (5.7) and (5.8).
Note that, in particular, if $w\in A_\infty$, $v=w$.
\end{proof}

Lastly, we discuss the integral operators with homogeneous  kernels.

\begin{theorem}
Let $T$ be an integral operator with  homogeneous kernel as in {\rm (4.14)},
$\Phi$ a Young function, $p=1/u_\Phi$, and  $w \in A_p$, $1 < p < \infty$,
such that  $w(A_i x)\le c\, w(x)$ for a.e.\! $x\in  \R^n$, all $i$.
Then, if  $ \ \lim_{Q_0 \to \R^n}m_{Tf}(t,Q_0) = 0$,
\[\int_{\R^n} \Phi   (|Tf(x)|  )\, w(x)\, dx \le c   \int_{\R^n} \Phi  (  | f(  x)|  )\, w( x)\,dx\,.\]
\end{theorem}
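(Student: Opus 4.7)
The plan is to chain together Theorem 5.1 (applied via Theorem 4.4 for homogeneous kernels), the hypothesis on $m_{Tf}(t,Q_0)$, the doubling of $\Phi$, a change of variable, and the characterization (5.3) of the $A_p$ weighted $\Phi$-inequality for $M$.

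First, since $w\in A_p\subset A_\infty$, the pair $(w,w)$ satisfies condition $F$, so Theorem 5.1 applies with $v=w$ and with $M_T$ as identified in Theorem 4.4, namely $M_Tf(x)=\sum_{i=1}^m Mf(A_i^{-1}x)$. Together with the assumption $\lim_{Q_0\to\R^n}m_{Tf}(t,Q_0)=0$, this yields
\[
\int_{\R^n} \Phi(|Tf(x)|)\,w(x)\,dx \le c\int_{\R^n}\Phi\Big(\sum_{i=1}^m Mf(A_i^{-1}x)\Big)w(x)\,dx.
\]

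Next, I would use that $\Phi$ satisfies condition $C$ (with some doubling constant $c_0$) to split the sum: iterating $\Phi(a+b)\le c_0(\Phi(a)+\Phi(b))$ a total of $m-1$ times,
\[
\Phi\Big(\sum_{i=1}^m Mf(A_i^{-1}x)\Big) \le c\sum_{i=1}^m \Phi(Mf(A_i^{-1}x)).
\]
For each $i$, I would then change variables $y=A_i^{-1}x$, which gives
\[
\int_{\R^n}\Phi(Mf(A_i^{-1}x))\,w(x)\,dx = |\det A_i|\int_{\R^n}\Phi(Mf(y))\,w(A_i y)\,dy,
\]
and invoke the hypothesis $w(A_iy)\le c\,w(y)$ a.e. to bound this by $c\int_{\R^n}\Phi(Mf(y))w(y)\,dy$.

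Finally, since $p=1/u_\Phi$ and $w\in A_p$, the characterization (5.3) of when $M$ is bounded on $L^\Phi(w)$ gives
\[
\int_{\R^n}\Phi(Mf(y))\,w(y)\,dy \le c\int_{\R^n}\Phi(|f(y)|)\,w(y)\,dy,
\]
and summing over $i=1,\dots,m$ yields the claimed inequality. The main obstacle is really just verifying that the hypotheses line up: one must check that Theorem 5.1 can be invoked with the given $M_T$ (which is what Theorem 4.4 provides) and that the $A_p$ weight survives the linear changes of variable, which is precisely the role of the assumption $w(A_ix)\le c\,w(x)$. Everything else is a bookkeeping assembly of ingredients already established in the paper.
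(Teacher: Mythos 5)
Your proposal is correct and follows essentially the same route as the paper: apply Theorem 5.1 with $M_T f(x)=\sum_i Mf(A_i^{-1}x)$ (via Theorem 4.4), split the sum inside $\Phi$ by doubling, change variables and use $w(A_ix)\le c\,w(x)$, and finish with the Kerman--Torchinsky characterization (5.3) for $w\in A_p$ with $p=1/u_\Phi$. The paper compresses the doubling step and leaves the Jacobian factor implicit, but the argument is the same.
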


\begin{proof}
By Theorem 5.1 and \cite{KermanTorchinsky},
\begin{align*}
\int_{\R^n} \Phi  (|Tf(x)|  )\, w(x)\,dx
&\le c \sum_{i=1}^m   \int_{\R^n} \Phi  ( Mf(A_i^{-1}x) )\, w(x)\,dx \\
&\le c \sum_{i=1}^m   \int_{\R^n} \Phi  ( Mf(x)  )\, w( A_i x)\,dx \\
&\le c  \sum_{i=1}^m   \int_{\R^n} \Phi  ( Mf(x)  )\, w(   x)\,dx\\
&\le c   \int_{\R^n} \Phi  (  | f(  x)|  )\, w( x)\,dx\,,
\end{align*}
and we have finished.
\end{proof}

A   simple computation shows that if $w(x)\in A_p$, then $w(\lambda x)$ is in $A_p$ with the same constant for
$\lambda>0$. So, when the $A_i$ are diagonal matrices with diagonal element $a_i>0$, $1\le i\le m$, as in \cite{RiverosUrciuolo},
without any additional assumpations on $w$,  by \cite{KermanTorchinsky} the conclusion is
\begin{equation*} \int_{\R^n} \Phi  (|Tf(x)|  )\, w(x)\,dx
\le c\, \sum_{i=1}^m   \int_{\R^n} \Phi( | f( A_i^{-1} x)| )\, w(  x)\,dx\,.
\end{equation*}

\section{$L^p_v-L^q_w$ estimates for singular integral operators, $1<p\le q<\infty$}

In this section we consider two-weight $L^p$ estimates for a fixed $1<p<\infty$
and two-weight, $L^p_v-L^q_w$ estimates with $1<p\le q<\infty$  that apply directly to a
 Calder\'{o}n-Zygmund singular integral operator 
 and where the control exerted by a maximal function is not  apparent.
 The  strategy to deal with these operators
follows the ideas developed so far: we consider  local median decompositions and
  weights naturally
 associated to them.  The first scenario corresponds to
 the $W_p$ classes of Fujii \cite{Fujii1991}.

\begin{mydef}   Fix $1 < p < \infty$. We say that the weights $(w, v)$ satisfy condition $W_p$
if there exist positive constants $\alpha, \beta, c_0$, with $\alpha <1$, so that for every cube $Q$ and for all measurable
$E, E' \subset Q$ with $E \cap E' = \emptyset$ and $|E'| \ge \alpha |Q|$,
\[\int_E w(x)\,dx \, \Big ( \frac{1}{|Q|} \int_{c(n,\alpha )Q} v(x)^{1-p'}\, dx \Big )^p
\le c_0 \Big ( \frac{|E|}{|Q|} \Big )^{\beta} \int_{E'} v(x)^{1-p'}\,dx < \infty\]
where $c(n,\alpha) > 1$   is increasing with respect to $\alpha$. For such weights we write $(w,v) \in W_p\,$.
\end{mydef}

Fujii notes that for $w = v$, $W_p$ is equivalent to the $A_p$ condition.
He also shows that $W_p$ implies Sawyer's testing condition for the two-weight, $(p,p)$ boundedness of $M$, \cite{Sawyer1982}.
Also, note that if $(w,v)\in W_p$,   $1 < p < \infty$,
$(w,v)$ satisfy condition $ F$.
Indeed,  observe that for a fixed $0<\alpha <1$,
for all $E\subset Q$ with $|E|\le (1-\alpha)|Q|$, by H\"older's inequality
\[ 1 \le c\, \Big(\frac{\int_{Q \setminus E} v(x)\,dx}{\int_{Q \setminus E} v(x)^{1-p'}\,dx}\Big)\Big(\frac{1}{|Q|}\int_{c(n,\alpha) Q} v(x)^{1-p'}\,dx\Big)^{p} \,,
\]
and therefore
\begin{align*}
\int_E w(x)\,dx &
\le  c\,c_0 \Big ( \frac{|E|}{|Q|} \Big )^{\beta} \Big(\int_{Q \setminus E} v(x)^{1-p'}\,dx \Big)
\Big(\frac{\int_{Q \setminus E} v(x)\,dx}{\int_{Q \setminus E} v(x)^{1-p'}\,dx}\Big)
\\
&\le  c\, \Big ( \frac{|E|}{|Q|} \Big )^{\beta}  \int_{Q \setminus E} v(x)\,dx\,,
\end{align*}
which gives condition $F$.

The variant of the decomposition of Section 2 that corresponds to these weights is sketched below and
is referred to as the annular decomposition.
\begin{mydef}
For $0 < s \le 1/2$ and a measurable function $f$, we define
\[m^{\sharp}_{f}(1-s,Q) = \inf_c m_{|f - c|}(1-s,Q).\]
\end{mydef}
Recall that by (4.3) of \cite{MedContOsc} we have that
\begin{equation*}
m^{\sharp}_f(1-s,Q) \le  m_{|f - m_f(1-s,Q)|}(1-s,Q)\le 2\,m^{\sharp}_f(1-s,Q)\,.
\end{equation*}

The annular decomposition is obtained by establishing
the analogues to Lemma 4.1 and Lemma 4.3 in \cite{MedContOsc} in this context, and
then the decomposition   follows entirely as Theorem 2.1, but with different bounds on the constants $a^v_j$.

\begin{theorem}
Let $f$ be a measurable function on a fixed cube $Q_0 \subset \mathbb{R}^n$, $0 < s < 1/2$, and $1/2 \le t < 1-s$. Then {\rm (ii)-(iv)} of Theorem 2.1 hold, and for a.e. $  x \in Q_0$,
\[|f(x) - m_f(t,Q_0)| \le  4M^{\sharp}_{0,s}f(x) + \sum_{v=1}^{\infty}\sum_{j \in I^v_2} a^v_j \indicator{Q^v_j}(x) \,,\]
where
\begin{equation*}
a^v_j \le (10n+2)\sup_{ Q_0\supset Q \supset Q^v_j }m^{\sharp}_f(1-s,Q)\,.
\end{equation*}
\end{theorem}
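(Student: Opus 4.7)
The plan is to mirror the proof of Theorem 2.1 line-by-line, replacing every occurrence of $2\inf_{y\in Q}M^{\sharp}_{0,s,Q}f(y)$ by $2\,m^{\sharp}_f(1-s,Q)$, and showing that the same iteration machinery carries through with the new quantity. Two preliminary facts are needed, which are the annular analogues of Lemmas~4.1 and~4.3 of \cite{MedContOsc}.

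First, I would prove a density estimate of the form
\[
|\{x\in Q:|f(x)-m_f(t,Q)|>2\,m^{\sharp}_f(1-s,Q)\}|\le s|Q|,
\]
valid for $0<s\le 1/2$ and $1/2\le t<1-s$. By definition of $m^{\sharp}_f(1-s,Q)$ and the estimate $m_{|f-m_f(1-s,Q)|}(1-s,Q)\le 2\,m^{\sharp}_f(1-s,Q)$ recalled in the excerpt, the definition of the median immediately gives the corresponding bound with $m_f(1-s,Q)$ in place of $m_f(t,Q)$ and strict inequality with $\eta>0$ added; continuity from below yields the closed version. A standard comparison between $m_f(t,Q)$ and $m_f(1-s,Q)$ for $t\in[1/2,1-s)$, which again costs only a multiple of $m^{\sharp}_f(1-s,Q)$, then delivers the displayed inequality. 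This plays the role of (2.2) in the new setting.

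Second, I would prove the median comparison
\[
|m_f(t,Q)-m_f(t,\widehat{Q})|\le 10n\,m^{\sharp}_f(1-s,\widehat{Q}),
\]
replacing (2.6). This is done exactly as Lemma 4.3 of \cite{MedContOsc}: express the difference in terms of how the median can shift when the cube grows by a factor of two, and estimate that shift on each of the at most $2^n$ dyadic subcubes using the previous density estimate on $\widehat{Q}$. The factor $10n$ is dimensional and comes from counting the combinatorial contributions as in the original proof.

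With these two tools in hand, I would run the iteration of Theorem~2.1 verbatim, defining the exceptional sets $E^k_j$, the stopping cubes $Q^k_j$, and the functions $s^k_j$, $g^k_j$, $f^{k-1}_j$ as before, but with the threshold $2\inf_{y\in Q^{k-1}_j}M^{\sharp}_{0,s,Q^{k-1}_j}f(y)$ replaced by $2\,m^{\sharp}_f(1-s,Q^{k-1}_j)$. Properties (ii)--(iv) follow from exactly the same packing argument that produced (2.5), (2.12), and (2.14), because those only used (2.2) and the maximality/dyadic structure of the selected cubes. The coefficient bound becomes
\[
|a^{v,j}_i|\le 10n\,m^{\sharp}_f(1-s,\widehat{Q^v_i})+2\,m^{\sharp}_f(1-s,Q^{v-1}_j),
\]
and since $Q^v_i\subset Q^{v-1}_j\subset Q_0$, both terms are dominated by $(10n+2)\sup_{Q_0\supset Q\supset Q^v_i}m^{\sharp}_f(1-s,Q)$, giving the stated bound on $a^v_j$. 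The reason I pass to the supremum over containing cubes is precisely to absorb the parent cube $\widehat{Q^v_j}$ and the $(v-1)$st generation ancestor uniformly. The main obstacle, if any, is checking that the annular analogue of Lemma~4.1 survives the shift from $t=1-s$ to general $t\in[1/2,1-s)$; but this is a routine monotonicity/median-comparison estimate, already present in \cite{MedContOsc}, so the rest of the proof is genuinely a substitution.
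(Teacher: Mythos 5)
Your proposal matches the paper's approach exactly: the paper dismisses this theorem with a single sentence, saying only to establish the annular analogues of Lemmas 4.1 and 4.3 of \cite{MedContOsc} and then run the iteration of Theorem 2.1 verbatim, which is precisely what you do. Your verification of the analogue of Lemma 4.1, i.e.\ the bound $|\{x\in Q:|f(x)-m_f(t,Q)|>2\,m^{\sharp}_f(1-s,Q)\}|\le s|Q|$, is correct and follows from the definitions together with the observation that $|m_f(t,Q)-c^*|\le m^\sharp_f(1-s,Q)$ for a near-optimal constant $c^*$, using $1/2\le t<1-s$.

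One small caution on your second lemma: you state the median comparison as $|m_f(t,Q)-m_f(t,\widehat{Q})|\le 10n\,m^{\sharp}_f(1-s,\widehat{Q})$, i.e.\ with the annular quantity evaluated on $\widehat{Q}$ alone. Unlike $\inf_{y\in Q}M^{\sharp}_{0,s,\widehat{Q}}f(y)$, which appears in (2.6) and already carries a supremum over all cubes $y\in Q'\subset\widehat{Q}$, the quantity $m^{\sharp}_f(1-s,\cdot)$ refers to a single cube and is not monotone in that cube. Thus applying the density estimate on $\widehat{Q}$ alone transfers a bound of measure $s|\widehat{Q}|=2^ns|Q|$ down to $Q$, which exceeds $(1-t)|Q|$ unless $s<(1-t)/2^n$ — a condition not assumed in the theorem. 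In the full range $0<s<1/2$ the comparison should be expected to cost $10n\,\sup_{\widehat{Q}\supset Q'\supset Q}m^{\sharp}_f(1-s,Q')$, with a supremum over intermediate cubes, which is also consistent with the supremum that the paper's own statement of the bound on $a^v_j$ carries. Since your final step takes the supremum over all $Q_0\supset Q\supset Q^v_j$ anyway, this does not affect the stated conclusion, but the intermediate claim as you wrote it is slightly too strong and should be amended to include the supremum.
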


Then, there is the corresponding pointwise inequality similar to Theorem 4.1.
\begin{theorem}
Let $T$ be a singular integral operator satisfying the conditions of Theorem 4.1 with $r=1$.
Then for  $0 < s \le  1/2$ and any cubes $Q_1 \supset Q_0$,
\begin{equation}
\sup_{Q_1 \supset Q \supset Q_0 } m^{\sharp}_{Tf}(1-s,Q) \le c \sup_{Q_1 \supset Q \supset Q_0 } \frac{1}{|Q|}\int_Q |f(y)|\,dy\,.
\end{equation}

Moreover, if we also have that $T(1)=0$,
then
\begin{equation*}
\sup_{Q_1 \supset Q \supset Q_0} m^{\sharp}_{Tf}(1-s,Q) \le c \sup_{Q_1 \supset Q \supset Q_0 }\frac{1}{|Q|}\int_Q |f(y) - f_Q|\,dy\,.
\end{equation*}
\end{theorem}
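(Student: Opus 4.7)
The proof should parallel that of Theorem 4.1 with $r=1$, but carried out at the scale of cubes $Q$ with $Q_0 \subset Q \subset Q_1$ and adapted to the new quantity $m^\sharp_{Tf}(1-s,Q) = \inf_c m_{|Tf-c|}(1-s,Q)$. Fix such a $Q$ with center $x_Q$, and decompose $f = f_1 + f_2$ with $f_1 = f\indicator{2Q}$, $f_2 = f\indicator{(2Q)^c}$ in the general case, and with $f_1 = (f - f_Q)\indicator{2Q}$, $f_2 = (f - f_Q)\indicator{(2Q)^c}$ in the $T(1)=0$ case (using linearity together with $T(f_Q) = 0$ there). The candidate constant in the definition of the median is $c = Tf_2(x_Q)$, which yields
\[ m^\sharp_{Tf}(1-s,Q) \le m_{|Tf - Tf_2(x_Q)|}(1-s,Q). \]
It then suffices to exhibit $\alpha$ no larger than a fixed multiple of the right-hand side of the asserted estimate such that $|\{z \in Q: |Tf(z) - Tf_2(x_Q)| > \alpha\}| < s|Q|$.

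First I would bound $Tf_1$ by the weak-$(1,1)$ property of $T$, as in the proof of (4.11):
\[ |\{z\in Q : |Tf_1(z)| > \lambda\}| \le \frac{c}{\lambda}\int_{2Q}|f_1|. \]
A suitable constant multiple of $\frac{1}{|2Q|}\int_{2Q}|f_1|$ makes this $< (s/2)|Q|$. Since $2Q$ contains $Q_0$, this average is comparable (up to the dimensional factor $2^n$) to a quantity bounded by the relevant supremum on the right-hand side of the asserted inequality; in the $T(1)=0$ case $|f_1|$ equals $|f - f_Q|$, so the telescoping argument of (4.7)--(4.9) bridges the oscillation averages on $2Q$ and on cubes between $Q_0$ and $Q_1$.

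Second, I would bound $Tf_2(z) - Tf_2(x_Q)$ uniformly in $z \in Q$ by exactly the kernel smoothness estimate of (4.10)--(4.6), obtaining
\[ \|Tf_2 - Tf_2(x_Q)\|_{L^\infty(Q)} \le c\sum_{m=1}^\infty \omega(c_n/2^m)\frac{1}{|2^m Q|}\int_{2^m Q}|f_1|. \]
In the general case the $\omega$-Dini condition makes the series finite, and each annular average is dominated by a supremum of averages of $|f|$ over cubes containing $Q_0$. In the $T(1) = 0$ case the telescoping argument of (4.9) produces an extra $(1+m)$ factor, which is absorbed by the strengthened hypothesis $\int_0^1 \omega(c_n t)\ln(1/t)\,dt/t < \infty$ from Theorem 4.1, and produces averages of $|f - f_{2^m Q}|$ that are controlled by $\sup_{Q_1 \supset Q' \supset Q_0}\frac{1}{|Q'|}\int_{Q'}|f - f_{Q'}|$.

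Combining the two pieces with constants $c_1, c_2$ and taking $\alpha = 2\max(c_1,c_2)$ times the appropriate supremum, the union bound gives $|\{z \in Q : |Tf(z) - Tf_2(x_Q)| > \alpha\}| < s|Q|$, whence $m^\sharp_{Tf}(1-s,Q) \le \alpha$. Taking the supremum over $Q$ with $Q_0 \subset Q \subset Q_1$ yields the theorem. The main technical obstacle is the bookkeeping of annular cubes $2^m Q$ that may overflow $Q_1$: one must replace $2^m Q$ by the comparable cube $(2^m Q) \cap$ (smallest cube between $Q_0$ and $Q_1$ containing it), or use the convergence of the Dini series to absorb the finitely many terms where $2^m Q \not\subset Q_1$, essentially the same bookkeeping that controls the telescoping sums in the $T(1)=0$ estimate.
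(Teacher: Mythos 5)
Your overall plan --- splitting $f = f_1 + f_2$ at scale $2Q$, testing the median against the candidate $Tf_2(x_Q)$, bounding $Tf_1$ via the weak-$(1,1)$ hypothesis and $Tf_2 - Tf_2(x_Q)$ via kernel smoothness exactly as in (4.5)--(4.11), then taking the supremum over $Q_0 \subset Q \subset Q_1$ --- is the right one, and the paper in fact omits the proof with the remark that it follows from familiar ideas. However, the obstacle you flag at the end is not a bookkeeping matter but a genuine gap, and neither fix you sketch closes it; indeed, read literally the inequality is false. Take $n=1$, $T$ the Hilbert transform (so $\omega(t)=t$ and $T(1)=0$), $Q_0 = Q_1 = [0,1]$, and $f = \indicator{[1,2]}$. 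The only admissible $Q$ is $[0,1]$, on which $f \equiv 0$, so both right-hand sides vanish; yet $Tf(x) = \tfrac{1}{\pi}\ln\tfrac{2-x}{1-x}$ is unbounded and nonconstant on $[0,1]$, so $m^{\sharp}_{Tf}(1-s,[0,1]) > 0$. Your first remedy --- intersecting $2^m Q$ with ``the smallest cube between $Q_0$ and $Q_1$ containing it'' --- is vacuous once $2^m Q \not\subset Q_1$, since no such cube exists; the second --- absorbing the overflowing annuli through the convergence of the Dini series --- fails because convergence of $\sum_m \omega(c_n 2^{-m})$ is irrelevant when the annular averages $\tfrac{1}{|2^m Q|}\int_{2^m Q}|f|$ themselves involve portions of $f$ outside $Q_1$ that the right-hand supremum cannot see.

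What rescues the statement in the paper's own use of it (the proof of Theorem 6.3) is the standing hypothesis $\mathrm{supp}\, f \subset Q_0$; under that assumption the annuli are harmless, since for every $Q \supset Q_0$ one has $\tfrac{1}{|2^m Q|}\int_{2^m Q}|f| \le \tfrac{1}{|Q_0|}\int_{Q_0}|f|$, which is itself one of the averages in the right-hand supremum, and then the estimates (4.7)--(4.11) go through with the Dini sum converging with room to spare. Alternatively, the clean formulation is to let the supremum on the right-hand side run over \emph{all} cubes $Q \supset Q_0$ (equivalently, take $Q_1 = \R^n$), which is all that the downstream application actually needs, since the resulting quantity is still dominated by $\inf_{y\in Q_0} M f(y)$. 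You should make one of these two choices explicit; as it stands, your argument does not establish the stated inequality for a general measurable $f$.
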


The proof of this result is based on what are by now familiar ideas and is therefore omitted.

We can now prove the two-weight $L^p$ boundedness.

\begin{theorem}
Let $1 < p < \infty$. Suppose that $(w,v) \in W_p$.
Let $T$ be a singular integral operator satisfying the conditions of Theorem 4.1
with $r=1$.
Then if  $f$ has support  contained in a cube $Q_0$,
\begin{equation*}
\int_{Q_0}  | Tf(x) - m_{Tf}(t,Q_0)  |^p \, w(x)\,dx \le c \int_{Q_0}|f(x)|^p \,v(x)\,dx\,.
\end{equation*}

Furthermore, if  $m_{Tf}(t,Q_0) \to 0$ as $Q_0 \to \mathbb{R}^n$, then
\begin{equation}
\int_{\mathbb{R}^n} |Tf(x)|^p \,w(x)\,dx \le c\int_{\mathbb{R}^n} |f(x)|^p\, v(x)\,dx\,.
\end{equation}
\end{theorem}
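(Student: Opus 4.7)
The approach is to combine the annular decomposition of Theorem 6.1, the pointwise estimate of Theorem 6.2, and the two-weight bounds that condition $W_p$ supplies. First I apply Theorem 6.1 to $Tf$ on $Q_0$, obtaining
\[ |Tf(x) - m_{Tf}(t,Q_0)| \le 4\, M^{\sharp}_{0,s,Q_0}(Tf)(x) + \sum_{v=1}^{\infty}\sum_{j \in I^v_2} a^v_j\, \indicator{Q^v_j}(x), \]
where, by Theorem 6.2 with $Q_1 = Q_0$,
\[ a^v_j \le c \sup_{Q_0 \supset Q \supset Q^v_j} \frac{1}{|Q|}\int_Q |f(y)|\,dy \le c\, \inf_{y \in Q^v_j} Mf(y). \]
Raising to the $p$th power and integrating against $w$ reduces the local estimate to controlling the two summands separately. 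For the first, Theorem 4.1 with $r=1$ majorizes $M^{\sharp}_{0,s,Q_0}(Tf)(x)$ pointwise by $c\,Mf(x)$; since $(w,v)\in W_p$ implies Sawyer's testing condition for the two-weight boundedness of $M$, one has $\|Mf\|_{L^p(w)} \le c\,\|f\|_{L^p(v)}$, and this piece is controlled.

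For the second summand $Sf = \sum_{v,j} a^v_j\,\indicator{Q^v_j}$, my plan is to adapt the layered argument in the proof of Theorem 3.1, with condition $W_p$ replacing condition $F$. Writing $\int_{Q_0}(Sf)^p\,w = \sum_{k\ge 1}\int_{\Omega^k \setminus \Omega^{k+1}}(Sf)^p\,w$, on each stratum only cubes of generation $\le k$ contribute, and the convexity of $t\mapsto t^p$ yields a bound of the form $(a_1+\cdots+a_k)^p \le c\sum_{v=1}^k 2^{p(k-v)} a_v^p$. This reduces matters to estimating, for each $v\le k$,
\[ \sum_{j\in I^v_2}(a^v_j)^p\,w\bigl(\Omega^k \cap Q^v_j\bigr). \]
Here one applies $W_p$ on each $Q^v_j$ with $E = \Omega^k \cap Q^v_j$ and $E' = Q^v_j \setminus \Omega^k$: by (2.14) the ratio $|E|/|Q^v_j|$ is at most $(s/(1-t))^{k-v}$, and choosing $s,t$ so that $s/(1-t) \le 1-\alpha$ guarantees $|E'| \ge \alpha|Q^v_j|$ as $W_p$ requires. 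Combining this with H\"older's inequality applied to the average defining $a^v_j$, and absorbing the $\sigma = v^{1-p'}$ factor supplied by $W_p$, produces
\[ (a^v_j)^p\,w(\Omega^k \cap Q^v_j) \le c\,\bigl(s/(1-t)\bigr)^{\beta(k-v)}\int_{c(n,\alpha)\,Q^v_j}|f|^p\,v\,. \]
Summing in $j$ (the $Q^v_j$ at fixed $v$ being disjoint and their enlargements $c(n,\alpha)Q^v_j$ having bounded overlap) and then in $v,k$ — the resulting geometric double series converging once $s,t$ are chosen so that $2^p(s/(1-t))^\beta<1$, in analogy with (3.2) — yields $\int_{Q_0}(Sf)^p\,w \le c\int_{Q_0}|f|^p\,v$, completing the local estimate. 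The global inequality (6.2) then follows by Fatou's lemma, using that $m_{Tf}(t,Q_0)\to 0$ as $Q_0\to\R^n$.

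The principal obstacle is matching scales: $a^v_j$ is realized as an average of $|f|$ over some dyadic ancestor of $Q^v_j$ in $\mathcal{D}(Q_0)$, whereas $W_p$ only supplies control at the scale of $Q^v_j$ (with the mild enlargement $c(n,\alpha)Q^v_j$). Bridging this gap requires a stopping-time selection of the ancestor that almost realizes the supremum defining $a^v_j$, after which the telescoping loss incurred in descending back to $Q^v_j$ can be absorbed into the geometric factor $(s/(1-t))^{\beta(k-v)}$ above, so that the layered summation goes through without any additional hypothesis on $v$.
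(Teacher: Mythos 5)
Your overall strategy matches the paper's: apply the annular decomposition (Theorem 6.1) to $Tf$ on $Q_0$, use Theorem 6.2 to control the $a^v_j$, dispose of the $M^{\sharp}_{0,s,Q_0}(Tf)$ piece via (4.3) and Sawyer's testing condition (which $W_p$ implies), and run a layered $W_p$ argument on the remaining sum, closing with Fatou's lemma. Two steps in your sketch, however, need to be repaired or tightened.

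First, you cannot push the diagonal terms $v=k$ through the $W_p$ machinery. In your display, for $v=k$ the set $E=\Omega^k\cap Q^k_j$ is all of $Q^k_j$ (since $Q^k_j\subset\Omega^k$), so $E'=Q^k_j\setminus\Omega^k$ is empty and the hypothesis $|E'|\ge\alpha|Q^v_j|$ of Definition 6.1 fails; moreover (2.14) is stated only for $v<k$. The paper avoids this by splitting
\begin{equation*}
|Tf(x)-m_{Tf}(t,Q_0)|^p\le c_p\big(a(x,Q_0,p)+b(x,Q_0,p)\big),
\end{equation*}
where $a(x,Q_0,p)$ absorbs $M^{\sharp}_{0,s}(Tf)^p$ together with all diagonal terms $\sum_{k,j}|a^k_j|^p\indicator{Q^k_j\setminus\Omega^{k+1}}$, and uses the disjointness of the $Q^k_j\setminus\Omega^{k+1}$ and the bound $a^k_j\le c\inf_{y\in Q^k_j}Mf(y)$ to obtain $a(x,Q_0,p)\le c\,Mf(x)^p$; only the strictly off-diagonal part $b(x,Q_0,p)$, which runs over $1\le v\le k-1$, goes to $W_p$. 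You need this separation: your ``for each $v\le k$'' should be ``for each $v<k$,'' with the $v=k$ terms folded into the testing-condition estimate.

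Second, the scale mismatch you flag in the last paragraph is a genuine gap that your proposal does not close. Your claimed inequality
\begin{equation*}
(a^v_j)^p\,w(\Omega^k\cap Q^v_j)\le c\,\Big(\frac{s}{1-t}\Big)^{\beta(k-v)}\int_{c(n,\alpha)Q^v_j}|f|^p\,v\,dx
\end{equation*}
does not follow from H\"older applied to the average defining $a^v_j$: that average is taken over a cube $Q^*$ with $Q^v_j\subset Q^*\subset Q_0$ that (approximately) realizes the supremum in Theorem 6.2, and $Q^*$ may be far larger than $Q^v_j$. Applying $W_p$ at scale $Q^*$ leaves $\int_{Q^*}|f|^p v$ on the right, and these cubes can overlap badly in $j$, so the sum over $j$ need not be controlled by $\int_{Q_0}|f|^p v$; applying $W_p$ at scale $Q^v_j$ leaves a $\sigma$-average over $c(n,\alpha)Q^v_j$ that does not cancel against the $\sigma$-average over $Q^*$ produced by H\"older. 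Your suggestion that ``the telescoping loss incurred in descending back to $Q^v_j$ can be absorbed into the geometric factor $(s/(1-t))^{\beta(k-v)}$'' is not an argument: the loss is governed by $|Q^*|/|Q^v_j|$, a quantity the geometric factor does not see. The paper itself does not carry out this step; it explicitly defers to the proof in Fujii's paper \cite{Fujii1991}. To turn your sketch into a proof you must reproduce (or adapt) that argument rather than apply $W_p$ blindly at the scale of $Q^v_j$.
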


\begin{proof}
We only sketch the proof, which  follows the blueprint of the proof of Theorem 3.1. By Theorem 6.1, for a.e. $\!x \in Q_0$,
\[|Tf(x) - m_{Tf}(t,Q_0)|  \le  4 M^{\sharp}_{0,s,Q_0}  (Tf  )(x) + \sum_{v=1}^{\infty} \sum_{j \in I^v_2} a^v_j\indicator{Q^v_j}(x),\]
where
\[a^v_j \le  (10n+2)\sup_{Q_0\supset Q\supset Q^v_j }m^{\sharp}_{Tf}(1-s,Q)\,.\]

Then
\[  |Tf(x) - m_{Tf}(t,Q_0)  |^p \le c_p\,   ( a(x,Q_0,p) + b(x,Q_0,p) )\,,\]
where  we define
\begin{equation*}
a(x,Q_0,p)  =   M^{\sharp}_{0,s}(Tf)(x)^p + \sum_{k=1}^{\infty} \sum_{j \in I^k_2}   |a^k_j|^p\indicator{Q^k_j \setminus \Omega^{k+1}}(x)
\end{equation*}
and
\begin{equation*}
b(x,Q_0,p)  = \sum_{k=2}^{\infty} \indicator{\Omega^k \setminus \Omega^{k+1}} \sum_{v=1}^{k-1} c_p^{k-v} \sum_{j \in I^v_2} |a^v_j |^p\indicator{Q^v_j}(x)\,.
\end{equation*}

Thus it suffices to prove that
\[\int_{Q_0} a(x,Q_0,p)\, w(x)\,dx \le  c \int_{Q_0} |f(x)|^p v(x)\,dx\]
and a similar estimate with $ b(x,Q_0,p)$ in place of $ a(x,Q_0,p)$ above.
By (4.3) and (6.1) we have that $a(x,Q_0,p)\le c\, Mf(x)^p$, and since ${\rm supp}(f)$ $\subset Q_0$,
that $W_p$ implies Sawyer's testing condition allows us to handle the first inequality. As for the second,
    we use (6.1) and that $(w,v) \in W_p$ as in the proof of the
    Theorem   in \cite{Fujii1991}. That (6.2) holds follows immediately from Fatou's lemma.
  \end{proof}

Condition $W_p$ also gives continuity results for singular integral operators for values of $q\ne p$.
Indeed,  we have
\begin{theorem}
Let $1 < p < \infty$. Suppose that $(w,v) \in W_p$. Let $T$ be a singular integral operator satisfying the conditions of Theorem 4.1
with $r=1$.

Then,  if $1<q<p$, and $0<\eta<p'/q'$,
 \begin{equation}
 \int_{\R^n} |Tf(x)|^q\, w(x)\,dx\le c \int_{\R^n} |f(x)|^q\,  (v(x)/Mv(x))^\eta\, Mv(x)\,   dx\,.
 \end{equation}

And, if $p<q<\infty$ and $0< 1-\eta<p/q<1$,
\begin{equation}
\int_{\R^n} |Tf(x)|^q\, w(x)\,dx\le c \int_{\R^n} |f(x)|^q\, ( Mv(x)/v(x))^\eta \,v(x)\,dx\,.
\end{equation}
\end{theorem}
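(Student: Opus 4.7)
The plan is to establish (6.3) and (6.4) by following the blueprint of the proof of Theorem 6.2: I will apply the annular decomposition of Theorem 6.1 to $Tf$, and then recalibrate the subsequent H\"older and telescoping arguments for exponent $q$ rather than $p$, so as to extract the modulating factor $(v/Mv)^\eta$ or $(Mv/v)^\eta$.

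First, I would apply Theorem 6.1 to $Tf$ on a cube $Q_0$ to get, for a.e. $x\in Q_0$,
\[|Tf(x)-m_{Tf}(t,Q_0)|\le 4M^\sharp_{0,s,Q_0}(Tf)(x)+\sum_v\sum_{j\in I^v_2}a^v_j\,\indicator{Q^v_j}(x),\]
where by the second part of Theorem 6.1 we have $a^v_j\le c\inf_{y\in Q^v_j}Mf(y)$. Raising to the $q$-th power and integrating against $w$, the task splits into controlling the \emph{maximal} part $\int_{Q_0}M^\sharp_{0,s,Q_0}(Tf)^q w$ and a \emph{residual} part $\int_{Q_0}(\sum_{v,j}a^v_j\indicator{Q^v_j})^q w$, exactly as in the proof of Theorem 6.2.

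For the maximal part I would invoke Theorem 4.1 ($M^\sharp_{0,s,Q_0}(Tf)(x)\le c\,Mf(x)$) and combine the Fefferman--Stein maximal inequality with a H\"older factorization of $w$ against $v^\eta$ and $(Mv)^{1-\eta}$, using the pointwise bound $w\le c\,v$ that comes out of condition $F$ (and hence $W_p$), to arrive at
\[\int_{Q_0}(Mf)^q w\,dx \le c\int_{Q_0}|f|^q v^\eta(Mv)^{1-\eta}\,dx,\]
with the analogous factorization for Case 2.

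For the residual part, the $p$-th power H\"older used in Theorem 6.2 must be replaced by a three-factor H\"older calibrated for the pair $(q,\eta)$: inside the telescoping sum over $k\ge 2$ and $1\le v\le k-1$, each $a^v_j$ is split via H\"older with exponents tied to $q/p$ and $\eta$; integrating against $w$ and invoking the $W_p$ condition in the inner integrals then produces a geometric factor of the form $(s/(1-t))^{\beta(k-v)\lambda(q,p,\eta)}$ on the $(k-v)$-th term. The numerical constraints $0<\eta<p'/q'$ (Case 1) and $0<1-\eta<p/q$ (Case 2) should be exactly what guarantees $\lambda(q,p,\eta)>0$, so the resulting series converges; the passage from $Q_0$ to $\R^n$ under $\lim_{Q_0\to\R^n} m_{Tf}(t,Q_0)=0$ is then standard via Fatou, exactly as in the proof of Theorem 6.2.

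The main obstacle is the residual-part bookkeeping: I expect the heart of the argument to lie in tracing, through the three-factor H\"older and the $W_p$ inequality, the precise $\eta$-dependence of the exponent attached to $(s/(1-t))^{k-v}$, and verifying that this exponent is strictly positive exactly on the open intervals claimed in the theorem. Matching the sharp open endpoints is delicate, since a naive two-endpoint Marcinkiewicz interpolation between the $L^p(v)\to L^p(w)$ bound of Theorem 6.2 and the Fefferman--Stein weak-type bound $w(\{|Tf|>\lambda\})\le c\lambda^{-1}\int|f|Mv$ (which follows from $Mw\le c\,Mv$) recovers only the isolated value $\eta=(q-1)/(p-1)<p'/q'$; one really needs the decomposition-based three-factor argument, in which the $W_p$ condition and the nested $\Omega^k$ sets cooperate via estimate (2.14) to push $\eta$ up to the sharp bound $p'/q'$ (and, dually, down to $1-p/q$ in Case 2).
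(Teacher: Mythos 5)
Your proposal diverges completely from the paper's proof. The paper's argument is a four-line interpolation: since $(w,v)\in W_p$ implies condition $F$, Theorem 5.4 gives $T\colon L^r(Mv)\to L^r(w)$ for \emph{every} $1<r<\infty$, and the two-weight $L^p$ theorem (your ``Theorem 6.2'') gives $T\colon L^p(v)\to L^p(w)$. Choosing $\theta=\eta$ and $r$ so that $1/q=(1-\eta)/r+\eta/p$, the Stein--Weiss interpolation theorem with change of measure then yields the $L^q_w$ bound with a source weight of the form $(Mv)^{1-\kappa}v^{\kappa}$, and the constraint $r>1$ is exactly $\eta<p'/q'$. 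The second case is handled symmetrically. There is no decomposition, no three-factor H\"older, and no telescoping over the $\Omega^k$ in the paper's proof of this theorem.

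This makes your closing paragraph misdirected: you assert that ``one really needs the decomposition-based three-factor argument'' because interpolation ``recovers only the isolated value $\eta=(q-1)/(p-1)$.'' That claim has two problems. First, the two-endpoint Marcinkiewicz interpolation you have in mind (between the strong $(p,p)$ bound and a single weak $(1,1)$ bound) is not what the paper uses; the paper interpolates against the whole \emph{family} $T\colon L^r(Mv)\to L^r(w)$, $1<r<\infty$, and letting $r$ (equivalently $\theta$) vary produces a continuous range of admissible exponents, not an isolated point. Second, your observation that the endpoint recoverable this way is $(q-1)/(p-1)$ rather than $p'/q'$ does identify a genuine tension: if you track the Stein--Weiss exponent carefully, with $\theta=\eta$ the resulting weight is $(Mv)^{1-q\eta/p}v^{q\eta/p}$, so the exponent appearing in the integrand is $q\eta/p$, and for $r>1$ this ranges only up to $(q-1)/(p-1)$. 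So you may be right that the paper's stated range of $\eta$ does not match what its own proof delivers, but you draw the wrong conclusion from it; the right move is to flag the possible discrepancy in the paper's bookkeeping, not to abandon the interpolation method that is in fact the paper's argument.

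Independently of that, your proposed decomposition proof is too vague to stand as a replacement. For the maximal part you assert that ``a H\"older factorization of $w$ against $v^\eta$ and $(Mv)^{1-\eta}$, using $w\le cv$'' combined with Fefferman--Stein gives $\int(Mf)^q w\le c\int|f|^q v^\eta(Mv)^{1-\eta}$; but Fefferman--Stein only gives $\int(Mf)^q v\le c\int|f|^q Mv$, and improving $Mv$ to $v^\eta(Mv)^{1-\eta}$ on the right is precisely the kind of refined two-weight bound for $M$ that requires a Sawyer-type testing argument, not a formal H\"older step. And for the residual part you never exhibit the ``three-factor H\"older calibrated for $(q,\eta)$'' nor the resulting exponent $\lambda(q,p,\eta)$; the claim that the admissible range falls out of the convergence of a geometric series is stated but not substantiated. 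As written, the proposal is an outline of an alternative strategy whose key steps are unverified, whereas the paper's actual proof is short and concrete once you invoke Theorem 5.4 and Stein--Weiss.
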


\begin{proof}  Since $(w,v)$ satisfy
property $F$,  by Theorem 5.4,
 \[\int_{\R^n} |Tf(x)|^r\, w(x)\,dx\le  c_r \int_{\R^n} |f(x)|^r\,Mv(x)\,dx\,,\]
for all $1<r<\infty$. Now, if $1<q<p$ and   $0<\eta<p'/q'$, the index $r$ defined  by
the relation
 \begin{equation*}
 \frac1{r}= \frac1{1-\eta}\,\Big(\frac1{q}-\frac{\eta}{p}\Big)
\end{equation*}
satisfies  $1<r<q<p$ and  $1/q=(1-\eta)/r+\eta/p$, and
since $T$ maps
$L^r(Mv)$ into $L^r(w)$ and
 $L^p(v)$ into $L^p(w)$, the conclusion follows
by the Stein-Weiss theorem of interpolation with change of measure.

Now, in  case  $p<q$ and $1-\eta<p/q<1$,  the index $r$ given by
\[
\frac1{r}= \frac1{\eta}\,\Big(\frac1{q}-\frac{1-\eta}{p}\Big)
\]
satisfies  $p<q<r<\infty$ and  $1/r=(1-\eta)/q+\eta/p$, and
since $T$ maps $L^p(v)$ into $L^p(w)$ and $L^r(Mv)$ into $L^r(w)$, the conclusion also follows
by the Stein-Weiss theorem of interpolation with change of measure.
\end{proof}

While (6.4) is reminiscent of extrapolation estimates \cite{CUMP},
the estimate (6.3) for values $q<p$  is not.

So far, we have produced two median  function decompositions
leading to two-weight  continuity results for Calder\'on-Zygmund
singular integral operators.
 Now, each decomposition generates families of cubes that share certain properties, and it is these properties and not the specific cubes that is of interest. In particular, the bounds on the respective  $a^v_j$ are related, and in this way the annular decomposition is stronger than the first decomposition. Indeed, it is readily seen that for a cube $Q^v_j$ generated by the annular decomposition,
\[\sup_{Q_0 \supset Q \supset Q^v_j} m^{\sharp}_f(1-s,Q) \le \inf_{y \in Q^v_j} M^{\sharp}_{0,s,Q_0}f(y)\,.\]
(This right-hand side is the bound for the $a^v_j$ used when invoking the first decomposition, as in the proof of Theorem 3.1.)

If we are more deliberate in the construction of the
local median decomposition, an even sharper bound for the $a^v_j$ results. This third
decomposition is needed for other applications, including Lerner's proof of the $A_2$ conjecture \cite{La}.

\begin{theorem}
Let $f$ be a measurable function on a fixed cube $Q_0 \subset \mathbb{R}^n$, $0 < s < 1/2$, and $1/2 \le  t < 1-s$. Then {\rm (ii)-(iv)} of Theorem 2.1 hold, and for a.e. $  x \in Q_0$,
\[|f(x) - m_f(t,Q_0)| \le  8M^{\sharp}_{0,s}f(x) + \sum_{v=1}^{\infty}\sum_{j \in I^v_2} a^v_j \indicator{Q^v_j}(x) \,,\]
where
\[ a^v_j \leq m_{|f - m_f(t,\widehat{Q^v_j})|}  (1-(1-t)/2^n,\widehat{Q^v_j} ).\]
\end{theorem}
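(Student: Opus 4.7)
The approach is to run the iteration of Theorem 2.1 with the same stopping cubes $\{Q^v_j\}$ but to redefine the coefficients so that they directly encode the median oscillation relative to the dyadic parent. This replaces the ``$10n$'' contribution in (2.1) by a sharper intrinsic quantity on $\widehat{Q^v_j}$.

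The geometric heart of the bound on $a^v_j$ is the elementary observation that, for any constant $c$ and any dyadic child $Q$ of $\widehat Q$,
\[m_{|f-c|}(t,Q)\le m_{|f-c|}\bigl(1-(1-t)/2^n,\widehat Q\bigr).\]
Indeed, by the definition of the median on $Q$, the set $\{y\in Q:|f(y)-c|\ge m_{|f-c|}(t,Q)\}$ has measure at least $(1-t)|Q|=((1-t)/2^n)|\widehat Q|$, which by the definition of the median on $\widehat Q$ at parameter $1-(1-t)/2^n$ forces the displayed inequality. Specializing to $c=m_f(t,\widehat{Q^v_j})$ and setting
\[a^v_j:=m_{|f-m_f(t,\widehat{Q^v_j})|}(t,Q^v_j)\]
then yields the claimed bound for $a^v_j$ at no additional cost.

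Next I would select the cubes $\{Q^v_j\}$ by the same stopping mechanism as in Theorem 2.1, so that properties (ii)--(iv) carry over verbatim (they depend only on the selection rule and the constraint $s/(1-t)<1$, not on the coefficients). To reach the pointwise inequality with the new coefficients I would rewrite, on each $Q^v_j\subset Q^{v-1}_i$,
\[f(x)-m_f(t,Q^{v-1}_i)=\bigl(f(x)-m_f(t,\widehat{Q^v_j})\bigr)+\bigl(m_f(t,\widehat{Q^v_j})-m_f(t,Q^{v-1}_i)\bigr),\]
bound the first summand on $Q^v_j$ by $a^v_j$ plus a residual that is absorbed into the next generation of the iteration (exactly as the analogous residual $f-m_f(t,Q^k_i)$ was absorbed in the proof of Theorem 2.1), and dominate the second summand by a multiple of $M^{\sharp}_{0,s}f(x)$ using Lemma 4.3 of \cite{MedContOsc} applied successively along the dyadic chain from $Q^{v-1}_i$ down to $\widehat{Q^v_j}$. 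The constant $8$ (instead of $4$) accommodates one additional factor of $2$ produced by this median swap, on top of the factor of $2$ already present in Theorem 2.1.

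The main obstacle will be the bookkeeping: ensuring that the telescoping discrepancies between parent and ancestor medians accumulated along a dyadic chain aggregate to a bounded multiple of $M^{\sharp}_{0,s}f(x)$ rather than compounding across generations. This rests on the geometric decay $|\Omega^k\cap Q^v_j|\le (s/(1-t))^{k-v}|Q^v_j|$ from (2.14), which forces each chain of residuals to sum geometrically and therefore contribute only a finite multiplicative constant. Once this is in place, the decomposition is obtained verbatim from the template of Theorem 2.1.
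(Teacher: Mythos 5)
Your key geometric observation — that for a dyadic child $Q$ of $\widehat Q$ and any constant $c$ one has $m_{|f-c|}(t,Q)\le m_{|f-c|}\bigl(1-(1-t)/2^n,\widehat Q\bigr)$ — is exactly the paper's inequality (6.7), and your derivation of it by taking complements in $\widehat Q$ is correct and matches the paper. That part is sound.

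The problem lies in how you propose to handle the second summand $m_f(t,\widehat{Q^v_j})-m_f(t,Q^{v-1}_i)$. You want to dominate it by a multiple of $M^{\sharp}_{0,s}f(x)$ by ``Lemma 4.3 of \cite{MedContOsc} applied successively along the dyadic chain from $Q^{v-1}_i$ down to $\widehat{Q^v_j}$.'' This fails: each application of Lemma 4.3 along the chain contributes a term of order $10n\,\inf_y M^{\sharp}_{0,s}f(y)$, and these contributions do not decay from one step of the chain to the next. The chain has length $\log_2(\ell_{Q^{v-1}_i}/\ell_{Q^v_j})$, which is not bounded \emph{a priori}, so the telescoped sum is not a bounded multiple of $M^{\sharp}_{0,s}f(x)$. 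The geometric decay (2.14) you invoke is a measure estimate, $|\Omega^k\cap Q^v_j|\le (s/(1-t))^{k-v}|Q^v_j|$, which controls how much of $Q^v_j$ survives into later stopping generations; it says nothing about the dyadic distance between a stopping cube and its stopping parent, and provides no decay along the chain you are telescoping.

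The step you attempt via the chain is in fact unnecessary, and the paper sidesteps it entirely. The quantity $m_f(t,\widehat{Q^v_j})-m_f(t,Q^{v-1}_i)$ equals $m_{f^{v-1}_i}(t,\widehat{Q^v_j})$, and this is precisely what the stopping condition at $\widehat{Q^v_j}$ controls — in a single step, with no chain. Moreover, the paper does not keep the Theorem 2.1 stopping threshold $2\inf_y M^{\sharp}_{0,s,Q^{k-1}_j}f(y)$: it replaces it by the median-based threshold $m_{|f^{k-1}_j|}(t,Q^{k-1}_j)$, and shows in (6.5) that this is still dominated by $4\inf_y M^{\sharp}_{0,s,Q^{k-1}_j}f(y)$. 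This substitution is what makes all constituent pieces of the coefficient bound pure median quantities attached to $\widehat{Q^v_j}$, which is essential for the duality argument in Theorem 6.6. If you keep the Theorem 2.1 threshold, the inherited second term in the coefficient bound remains an $M^{\sharp}$ quantity rather than a median oscillation, and you do not recover the form $a^v_j\le m_{|f-m_f(t,\widehat{Q^v_j})|}(1-(1-t)/2^n,\widehat{Q^v_j})$. To repair your argument: change the stopping threshold as the paper does, establish (6.5), read off the bound on $m_{f^{v-1}_i}(t,\widehat{Q^v_j})$ directly from the stopping condition, and then the basic median property $|m_g(t,Q)-c|\le m_{|g-c|}(t,Q)$ together with your observation gives the coefficient bound without any chain.
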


\begin{proof}
We follow the proof of Theorem 2.1 in form, with
a few definitional changes. First note that for any cube $Q$,
\begin{equation}
m_{|f - m_f(t,Q)|}(t,Q) \leq  4\inf_{y \in Q}M^{\sharp}_{0,s,Q}f(y)\,.
\end{equation}
To see this, from Proposition 1.1 and (4.3) in \cite{MedContOsc},
\begin{align*}
m_{|f - m_f(t,Q)|}(t,Q) &\le m_{|f - m_f(1-s,Q)| + |m_f(1-s,Q) - m_f(t,Q)|}(t,Q)
\\
&\le 2\, m_{|f - m_f(1-s,Q)|}(t,Q) \le 2\,m_{|f - m_f(1-s,Q)|}(1-s,Q)
\\
&\le 4\inf_{y \in Q}M^{\sharp}_{0,s,Q}f(y)\,.
\end{align*}

We define $E^1 = \{x \in Q_0: |f(x) - m_f(t,Q_0)| > m_{|f - m_f(t,Q_0)|}(t,Q_0)\}$. If $|E^1| = 0$, the decomposition halts, just as in Theorem 2.1. So we suppose $|E^1| > 0$. We then define
\[\Omega^1 = \big \{x \in Q_0: m^{t,\Delta}_{Q_0}(f^0)(x) > m_{|f - m_f(t,Q_0)|}(t,Q_0) \big \}.\]

Proceeding as before, we have that $\Omega^1 = \bigcup_j Q^1_j$ so that (as in (2.3))
\begin{equation}
|m_{f^0}(t,Q^1_j)| > m_{|f^0|}(t,Q_0), \quad {\text{and}}\quad |m_f(t, \widehat{Q^1_j})| \leq m_{|f^0|}(t,Q_0)\,.
\end{equation}
Furthermore, we also have that
\begin{equation*}
\sum_j |Q^1_j| \le \frac{s}{1-t}\,|Q_0|\,.
\end{equation*}

Before continuing, observe that for any cube $Q$,
\begin{equation}
m_f(t,Q) \le  m_f  (1-(1-t)/2^n,\widehat{Q} )\,.
\end{equation}
To see this, note that
\begin{align*}
|\{y \in \widehat{Q}: f(y) \ge  m_f(t,Q)\}| &\ge  |\{y \in Q: f(y) \ge  m_f(t,Q)\}|
\\
&\ge (1-t)|Q| = \frac{1-t}{2^n}| \widehat{Q} |\,,
\end{align*}
so taking complements in $\widehat{Q}$ we have
\[|\{y \in \widehat{Q}: f(y) < m_f(t,Q)\}| \le  \Big ( 1 - \frac{1-t}{2^n} \Big ) |\widehat{Q}|\,.\]
Note also that by our choice of $t$, it follows that $1 - (1-t)/2^n \ge  1/2$.

Let $\alpha^1_j = m_{f^0}(t,Q^1_j)$. By (6.6) and (6.7) we have
\begin{align*}
|\alpha^1_j| &\le  |m_{f^0}(t,Q^1_j) - m_{f^0}(t, \widehat{Q^1_j})| + |m_{f^0}(t,\widehat{Q^1_j})|
\notag
\\
&\le  m_{|f^0 - m_{f^0}(t,\widehat{Q^1_j})|}(t,Q^1_j) + m_{|f^0|}(t,Q_0)
\notag
\\
&\le  m_{|f - m_f(t,\widehat{Q^1_j})|}\Big (1-(1-t)/2^n,\widehat{Q^1_j} \Big) + m_{|f^0|}(t,Q_0)\,.
\end{align*}

This gives the first iteration of the
decomposition of $f$ when $|E^1|$ $ > 0$: for a.e. $  x \in Q_0$, with $g^1 = f^0 \indicator{Q_0 \setminus \Omega^1}$,
\[f^0(x) = g^1(x) + \sum_j \alpha^1_j \indicator{Q^1_j}(x) + \sum_j   ( f^0(x) - m_{f^0}(t,Q^1_j) )\indicator{Q^1_j}(x).\]
Clearly by (6.5)
\[|g^1(x)| \le  m_{|f - m_f(t,Q_0)|}(t,Q_0) \le  4\inf_{y \in Q_0}M^{\sharp}_{0,s,Q_0}f(y) \le  4M^{\sharp}_{0,s,Q_0}f(x)\]
a.e.  on $Q_0 \setminus \Omega^1$.

By proceeding as in the proof of Theorem 2.1, the result follows.
\end{proof}

This decomposition generates families of cubes sharing the same properties as those
from the first and annular decompositions.
And, as anticipated, for some parameters $s, t$ the bound on the $a^v_j$ from this decomposition is even smaller.
Indeed, for any $c$, since $1/2 \le  t \le 1-(1-t)/2^n$,
\begin{align*}
m_{|f - m_f(t,\widehat{Q^v_j})|}&(1-(1-t)/2^n,\widehat{Q^v_j})
\\
&\le m_{|f - c|}(1-(1-t)/2^n,\widehat{Q^v_j}) +  |c -m_f(t,\widehat{Q^v_j})  |
\\
&\le m_{|f - c|}(1-(1-t)/2^n,\widehat{Q^v_j}) + m_{|f-c|}(1-(1-t)/2^n,\widehat{Q^v_j})
\\
&\le 2 m_{|f - c|}(1-(1-t)/2^n,\widehat{Q^v_j})\,.
\end{align*}
Thus
\[m_{|f - m_f(t,\widehat{Q^v_j})|}(1-(1-t)/2^n,\widehat{Q^v_j}) \le 2\, m^{\sharp}_f (1-(1-t)/2^n,\widehat{Q^v_j})\,.\]
Then for any $0 < s < 1/2^{n+1}$ and $1/2 \le t \le 1-2^n s$,
\[m_{|f - m_f(t,\widehat{Q^v_j})|}(1-(1-t)/2^n,\widehat{Q^v_j}) \le c \sup_{Q_0 \supset Q \supset Q^v_j}m^{\sharp}_f(1-s,Q)\,.\]

Before we proceed to prove our next theorem, we need a
couple of preliminary results. The first is
an extension of a property given in Lemma 4.8 in \cite{JawerthTorchinsky} and the comments that follow it.
\begin{lem}
Let $T$ be a Calder\'on-Zygmund singular integral operator
defined by { \rm (4.2)} and $Q$ a cube of $\R^n$.
If $T$ satisfies the assumptions of Theorem 4.1 with $1\le r<\infty$, let
 \[\lambda_m=  \omega (c_n/{2^m})\,,\quad m\ge 1\,.\]
If $T$ satisfies the assumptions  of Theorem 4.3 for the Young function $A(t)=t^{r'}$, $1\le r<\infty$, let
 \[ \lambda_m = \sup_{u,v\in Q}
|2^{m+1}Q| \,  \| \indicator{2^{m+1}Q\setminus  2^{m}Q} ( k (u, \cdot)- k(v,\cdot)   )\|_{L^{{r'}}(2^{m+1}Q)} \,.
\]
In either case, we have $\sum_{m=1}^\infty \lambda_m<\infty$ and
\begin{equation}
m^\sharp_{Tf}(1-s, Q) \le
  c\,  \sum_{m=1}^{\infty}  \lambda_m  \,\Big(
 \frac{1}{|2^{m}Q|} \int_{2^m Q} |f(y) |^r \, dy \Big)^{1/r}.
 \end{equation}
\end{lem}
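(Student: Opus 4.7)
The plan is to mimic the proofs of Theorems 4.1 and 4.3, but stop at the level of the median rather than chasing a full sharp-maximal inequality. Fix $Q$ and let $x_Q$ be its center. Split $f=f_1+f_2$ with $f_1=f\indicator{2Q}$, $f_2=f\indicator{(2Q)^c}$, and set $c=Tf_2(x_Q)$. Since $m^\sharp_{Tf}(1-s,Q)=\inf_{c'}m_{|Tf-c'|}(1-s,Q)\le m_{|Tf-c|}(1-s,Q)$, it suffices to bound the latter. I will use the following easy consequence of the definition of the median: if
\[|\{z\in Q:|Tf(z)-c|>M\}|<s|Q|,\]
then $m_{|Tf-c|}(1-s,Q)\le M$, because for every $M'>M$ the set $\{|Tf-c|<M'\}$ contains $\{|Tf-c|\le M\}$, which has measure greater than $(1-s)|Q|$.

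The first ingredient is a uniform pointwise bound for the ``tail'' piece. For $z\in Q$,
\[|Tf_2(z)-Tf_2(x_Q)|\le \sum_{m=1}^\infty\int_{2^{m+1}Q\setminus 2^m Q}|k(z,y)-k(x_Q,y)|\,|f(y)|\,dy.\]
In the Dini case one argues exactly as in (4.6) of the proof of Theorem 4.1, bounding $|z-y|^{-n}$ by $c|2^m Q|^{-1}$ on the $m$-th annulus and then applying H\"older with exponent $r$. In the H\"ormander case one applies the Orlicz H\"older inequality for $A(t)=t^{r'}$ and $\overline A(t)\sim t^r$, as in the proof of Theorem 4.3, which produces exactly the factor $|2^{m+1}Q|\,\|\indicator{2^{m+1}Q\setminus 2^m Q}(k(z,\cdot)-k(x_Q,\cdot))\|_{L^{r'}(2^{m+1}Q)}$ times the normalized $L^r$-average of $f$ on $2^{m+1}Q$. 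Using $|2^{m+1}Q|\sim|2^m Q|$ and reindexing, in both cases one obtains the uniform estimate
\[B:=\sup_{z\in Q}|Tf_2(z)-Tf_2(x_Q)|\le c\sum_{m=1}^\infty \lambda_m\Big(\frac{1}{|2^m Q|}\int_{2^m Q}|f|^r\Big)^{1/r}.\]

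The second ingredient handles $Tf_1$. Since $T$ is of weak-type $(r,r)$,
\[|\{z\in Q:|Tf_1(z)|>\lambda\}|\le \frac{c}{\lambda^r}\int_{2Q}|f|^r,\]
so the choice $\lambda_*=c_s\bigl(|2Q|^{-1}\int_{2Q}|f|^r\bigr)^{1/r}$, with $c_s$ large enough, makes this set have measure strictly less than $s|Q|$. Combining this with Step~1 via
\[\{z\in Q:|Tf(z)-c|>B+\lambda_*\}\subset\{z\in Q:|Tf_1(z)|>\lambda_*\},\]
the median observation above gives $m^\sharp_{Tf}(1-s,Q)\le B+\lambda_*$. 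But $\lambda_*$ is, up to a constant depending on $\lambda_1$ (which is strictly positive in both settings), dominated by the $m=1$ term of $B$, so it can be absorbed. Finally, $\sum_m\lambda_m<\infty$ in the Dini case follows from $\sum_m\omega(c_n/2^m)\lesssim\int_0^1\omega(c_nt)\,dt/t<\infty$, and in the H\"ormander case it is the hypothesis on $A$ with $A(t)=t^{r'}$.

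The main nuisance is purely bookkeeping: keeping track of the Orlicz H\"older inequality with the right normalization of the Luxemburg norm, and reconciling the annular indexing used on $(2Q)^c$ with the full-cube averages $|2^m Q|^{-1}\int_{2^m Q}|f|^r$ appearing in the statement, so that the reindexed sum is controlled by the original $\lambda_m$'s (up to constants) rather than by shifted weights $\lambda_{m-1}$.
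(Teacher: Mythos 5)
Your argument is the paper's own argument: split $f=f_1+f_2$ around $2Q$, control $Tf_2-Tf_2(x_Q)$ by the full sum $I=\sum_m\lambda_m\bigl(|2^mQ|^{-1}\int_{2^mQ}|f|^r\bigr)^{1/r}$ via (4.6) or Orlicz--H\"older, control $Tf_1$ by a weak-type estimate, and read off the median from the resulting distributional bound. The paper's proof sets this up exactly as (6.9), (6.10) and then takes $c>\max\{c_1,c_2\}$; you just unwound the ``median observation'' that the paper leaves implicit.

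Two small points. First, for the $Tf_1$ step you write ``since $T$ is of weak-type $(r,r)$,'' but this hypothesis is only present under Theorem 4.1. Under the Theorem 4.3 hypotheses, which is the other half of the lemma, $T$ is only assumed to be weak-type $(1,1)$; there you must insert H\"older's inequality, $\int_{2Q}|f|\le|2Q|^{1/r'}\bigl(\int_{2Q}|f|^r\bigr)^{1/r}$, before solving for the threshold $\lambda_*$ --- exactly the step the paper flags with ``(6.9) follows using that $T$ is of weak-type $(1,1)$ and H\"older's inequality when $r>1$.'' Second, your absorption of $\lambda_*$ into the $m=1$ term of $B$ silently divides by $\lambda_1$. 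In the Dini case $\lambda_1=\omega(c_n/2)$ is a fixed positive number, but in the H\"ormander case $\lambda_1$ depends on $Q$ and the hypothesis of Theorem 4.3 only bounds $\sum_m\lambda_m$ from above, so $\lambda_1$ can be arbitrarily small. The paper makes the same silent move when it declares a $Q$-independent $c_1$ in (6.9); the honest statement of the lemma would carry the extra local $L^r$-average of $f$ over $2Q$ as a separate term (which in the application to Theorem 6.6 is harmless, since it is dominated by $M_rf$). Your last paragraph correctly flags the annulus-versus-full-cube indexing issue, which is likewise present in the paper's (4.6); it is absorbed into the dimensional constant $c_n$ rather than a genuine obstruction.
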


\begin{proof}
Fix $Q$,  let $x_Q\in Q$, and put $f=f_1+f_2$ where $f_1=f\indicator{2Q} $.
We claim that there exist constants $c_1,c_2>0$ independent of $f$ and $Q$ such that
\begin{equation}
|\{z \in Q: |Tf_1(z)| >  c_1\, I \}| < s\,|Q|\,,
\end{equation}
and
\begin{equation}
  \|Tf_2 - Tf_2(x_Q) \|_{L^{\infty}(Q)} \le  c_{2} \, I\,,
\end{equation}
where
\[ I= \sum_{m=1}^{\infty} \lambda_m  \,\Big(\frac{1}{|2^{m}Q|} \int_{2^m Q} |f(y) |^r \, dy \Big)^{1/r}.
\]

Now, if  $T$ satisfies the assumptions
of Theorem 4.1, (6.9) follows since $T$ is of weak-type $(r,r)$, and
(6.10) follows as in the proof of (4.10) followed by H\"older's inequality when $r>1$.

And, when $T$ satisfies the assumptions
of Theorem 4.3, (6.10) holds automatically.  (6.9) follows using
that $T$ is of weak-type $(1,1)$ and H\"older's inequality when $r>1$.

 Then, in either case, with $c > \max \{c_1, c_2\}$, (6.9) and (6.10) give
\begin{equation*}
|\{z \in Q : |Tf(z) - Tf_2(x_Q)| > 2c \, I\}| <{s}\,|Q|\,.
\end{equation*}
Whence for all $Q$,
\begin{align*} m^\sharp_{Tf}(1-s, Q) &=\inf_{c'} \; \inf \{\alpha \ge  0: |\{z \in Q: |Tf(z) - {c'}| > \alpha\}| < s|Q|  \}\\
&\le c\, \sum_{m=1}^{\infty} \lambda_m  \,\Big( \frac{1}{|2^{m}Q|} \int_{2^m Q} |f(y) |^r\, dy\Big)^{1/r}
\end{align*}
and (6.8) holds.
\end{proof}

Note that Lemma 6.1 also applies to the Calder\'on-Zygmund singular integral operators of Dini type.
In that case, using Lemma 5 in \cite{KurtzW}, the $\lambda_m$ can be estimated in terms
of the $\omega_{r'}$ modulus of continuity of   $\Omega$.

Next, we collect some properties of Young functions in the classes $B_p$ and $B_p^\alpha$. The latter class was introduced by
Cruz-Uribe and Moen and for  $0<\alpha<1$ and  $1<p<1/\alpha$, it consists of those Young functions $A$ such that with $1/q= 1/p-\alpha$,
\begin{equation}
\|A\|_{\alpha,p}=\Big(\int_c^\infty \frac{A(t)^{q/p}}{t^q}\,\frac{dt}{t}\Big)^{1/q}<\infty\,.
\end{equation}

As they point out, if $\alpha > 0$,  $B_{p}^\alpha$
 is weaker than $B_p$. The  result of interest to us, Theorem 3.3 in \cite{CUM}, is that
 for   $A\in B_p^\alpha$    the maximal function
\[M_{\alpha, A}f(x)= \sup_{x\in Q} |Q|^\alpha\|f\|_{L^A(Q)}
\]
maps $L^p(\R^n)$  continuously into  $L^q(\R^n)$ with norm
$ \le c\,\|A\|_{\alpha,p}\,$.

We also have,

\begin{prop}
Let $A$ be a Young function and $C(t)= A(t^r)$ for $1<r<\infty$.
\begin{enumerate}
\item[\rm (i)]
If $A\in B_p$, then $C\in B_{rp}$.
Furthermore,   for all cubes $Q$,
\begin{equation}
 \| g \|_{L^{C}(Q)}=  \| g^{r} \|_{L^{A}(Q)}^{1/r} \,.
 \end{equation}
  \item[\rm(ii)]
If $\overline{A} \in B_{p\,'}$,
there exists a positive constant $c$
independent of  $g$ and $Q$ such that
\begin{equation}
 \| g\|_{L^p(Q)}\le c\, \|g\|_{L^A(Q)}\,.
 \end{equation}
\item[\rm (iii)]
  If     $A\in B^{\alpha r}_{p/r}$  for $r<p$,   then  $C\in B^\alpha_p$.
\item[\rm(iv)]
If $\overline A\in B_p^\alpha$, then $\overline C\in B_p^\alpha$.
In particular, if  $\overline A\in B_p \,$, then $\overline C\in B_p$.
\end{enumerate}
\end{prop}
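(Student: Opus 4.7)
The plan is to address each of the four parts in turn. Parts (i) and (iii) will be essentially the substitution $u = t^r$ in the defining integrals; part (iv) will follow from a pointwise comparison of the conjugate Young functions; and part (ii), which I expect to be the main obstacle, will require the generalized Hölder inequality together with an extraction of a growth bound from the $B_{p'}$ condition on $\overline{A}$.

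For (i), the identity $\|g\|_{L^C(Q)} = \|g^r\|_{L^A(Q)}^{1/r}$ is immediate from unwinding the Luxemburg definition: the condition $\frac{1}{|Q|}\int_Q C(|g|/\lambda)\,dy \le 1$ reads $\frac{1}{|Q|}\int_Q A(|g|^r/\lambda^r)\,dy \le 1$, so the infimum over $\lambda$ defining the left side equals the $1/r$-th power of the infimum over $\mu = \lambda^r$ defining $\|g^r\|_{L^A(Q)}$. For the $B_{rp}$ inclusion, I would substitute $u = t^r$ in the integral $\int_c^\infty C(t)\, t^{-rp-1}\,dt$, obtaining (up to the Jacobian constant $1/r$) the integral $\int_{c^r}^\infty A(u)\, u^{-p-1}\,du$, which is finite by $A \in B_p$. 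Part (iii) is handled by the same substitution in the $B_p^\alpha$-integrand of $C$: with $1/q = 1/p - \alpha$, setting $q_A = q/r$ gives $1/q_A = 1/(p/r) - \alpha r$ and $q_A/(p/r) = q/p$, so the transformed integral is precisely the defining $B_{p/r}^{\alpha r}$-integrand for $A$, which is finite by hypothesis.

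For (iv), the strategy is to establish the pointwise comparison $\overline{C}(s) \le \overline{A}(s)$ for $s$ past some threshold. Since $r \ge 1$ and $A$ is increasing, $C(t) = A(t^r) \ge A(t)$ for all $t \ge 1$. As $s$ grows, the supremum $\overline{C}(s) = \sup_t(st - C(t))$ is attained at values of $t$ tending to infinity, so for $s$ large enough the sup is effectively over $t \ge 1$, where $st - C(t) \le st - A(t) \le \overline{A}(s)$. Inserting this pointwise bound into the $B_p^\alpha$-integrand for $\overline{C}$ then yields $\overline{C} \in B_p^\alpha$ from $\overline{A} \in B_p^\alpha$; the ``in particular'' case of $B_p$ is obtained by the same argument with the weight $t^{-p-1}\,dt$ in place of $t^{-q-1}\,dt$.

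The main obstacle is part (ii). The plan is to use the generalized Hölder inequality for Orlicz spaces, $\frac{1}{|Q|}\int_Q |gh|\,dy \le 2\|g\|_{L^A(Q)}\|h\|_{L^{\overline{A}}(Q)}$, combined with the $L^p$--$L^{p'}$ duality $\|g\|_{L^p(Q)} \sim \sup\{\frac{1}{|Q|}\int_Q gh\,dy : \|h\|_{L^{p'}(Q)} \le 1\}$; this reduces the problem to the uniform Orlicz embedding $\|h\|_{L^{\overline{A}}(Q)} \lesssim \|h\|_{L^{p'}(Q)}$. That embedding is in turn equivalent to the growth estimate $\overline{A}(t) \lesssim t^{p'}$ for $t$ large, which ought to be extracted from $\overline{A} \in B_{p'}$ by combining the convexity of $\overline{A}$ (so that $\overline{A}(t)/t$ is non-decreasing) with the convergence of $\int_c^\infty \overline{A}(t)\, t^{-p'-1}\,dt$: these two together prevent $\overline{A}(t)/t^{p'}$ from blowing up as $t \to \infty$.
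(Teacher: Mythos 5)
Your proposal is correct on all four parts, and for (i) and (iii) it matches the paper's (sketched) approach: the norm identity by unwinding the Luxemburg definition, and the $B_{rp}$, $B^\alpha_p$ memberships by the substitution $u=t^r$ in the defining integrals, with the index bookkeeping $q_A = q/r$ and $q_A/(p/r)=q/p$ coming out exactly right.

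For (ii) and (iv) you take genuinely different routes from the paper, and both work. In (iv) the paper passes to inverse functions, using $C^{-1}(t)=A^{-1}(t)^{1/r}$ and the standard equivalence $\overline{C}^{\,-1}(t)\sim t^{1/r'}\overline{A}^{\,-1}(t)^{1/r}$, then manipulates the $B^\alpha_p$ integral accordingly. Your argument instead establishes the pointwise bound $\overline{C}(s)\le\overline{A}(s)$ for all sufficiently large $s$: since $r>1$ gives $C(t)=A(t^r)\ge A(t)$ for $t\ge 1$, one has $\sup_{t\ge 1}(st-C(t))\le\overline{A}(s)$, and $\sup_{0<t<1}(st-C(t))\le s$, which is $\le\overline{A}(s)$ for $s$ large because $\overline{A}(s)/s\to\infty$ when $A$ is finite-valued; the $B^\alpha_p$ integral then transfers at once. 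This is more elementary and sidesteps the inverse-function machinery. In (ii) the paper works directly on $A$: from $\overline{A}\in B_{p'}$ it derives $\int_c^\infty(t^p/A(t))^{p'-1}\,dt/t<\infty$, deduces $A(t)\ge c_0 t^p$ for large $t$, and concludes by a direct computation or the closed graph theorem. You instead dualize, reducing to the Orlicz embedding $\|h\|_{L^{\overline{A}}(Q)}\lesssim\|h\|_{L^{p'}(Q)}$ via the generalized H\"older inequality, which you reduce to the growth estimate $\overline{A}(t)\lesssim t^{p'}$ for large $t$; that estimate does follow from $\overline{A}\in B_{p'}$ together with $\overline{A}(t)/t$ increasing (for instance, $\overline{A}(t_0)/t_0^{p'}\le c\int_{t_0}^{2t_0}\overline{A}(t)t^{-p'-1}\,dt\to 0$), and the embedding then follows by splitting $\overline{A}$ at a threshold and using convexity on the small-argument piece. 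The two proofs of (ii) are essentially dual to each other: yours extracts an upper bound on $\overline{A}$, the paper's a lower bound on $A$; the paper's is slightly shorter, yours is arguably more transparent and fits the Orlicz-duality theme used elsewhere in Section 6.
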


\begin{proof}

The proof of (i) is a straightforward computation and is therefore omitted. As for (ii),
recall that if $\overline A\in B_{p'}$,  for some constant $c>0$,
\[ \int^\infty_c \Big(\frac{t^{p}}{A(t)}\Big)^{p'-1}\,\frac{dt}{t}<\infty\,,\]
and therefore,  there exist positive constants $c_0,c_1$ such that
$A(t)\ge c_0 \,t^p$ for  $t\ge c_1$. Then,
by a direct computation or the closed graph theorem, there exists a positive constant $c$
independent of  $g$ and $Q$ such that (6.13) holds.

Now, for (iii),
let $q$ be given by the relation  $1/q=1/p-\alpha$; then
 $r/q=r/p-\alpha\,r$ and the value of $q$ in (6.11) for membership in the class
 $B^{\alpha r}_{p/r}$ is $q/r$. Then, since  $(q/r)/(p/r)=q/p$, we have
\[ \int_{c }^\infty \frac {C(t)^{q/p}}{t^{q}}\,\frac{dt}{t}=
\int_{c_1}^\infty \frac {A(t)^{(q/r)/(p/r)}}{t^{q/r}}\,\frac{dt}{t}
<\infty\,,
\]
and $C \in B_p^\alpha$.

Finally, (iv); since the proof   for $\alpha=0$ follows by setting $p=q$ in the proof
for the case $\alpha>0$, we do the latter.  Taking inverses, $ C^{\  -1}(t) = A^{\  -1} (t)^{1/r}$,
and therefore it readily follows that
$\overline C^{\  -1}(t)\sim   t^{1/r'} \, \overline{A}^{\  -1} (t)^{1/r}$.
Then,
\begin{align*}
\int_c^\infty \frac{\overline C(t)^{q/p}}{t^q} \frac{dt}{t} &\le c_1 \int_{c_2}^\infty
\frac{t^{q/p-1}}{\overline C^{\, -1}(t)^q}\, dt
\\
&\sim c_3 \int_{c_2}^\infty \frac{t^{q/p-1}}{\overline A^{\, -1}(t)^{q/r} t^{q/r'}}\,dt\\
&\sim c_4
\int_{c_5}^\infty \frac1{\overline A(t)^{q/r'}}\overline A(t)^{q/p} \frac1{t^{q/r}}\,\frac{dt}{t}\,,
\end{align*}
which, since  $\overline A(t)/t$ increases,    is bounded by
 \[c_6 \int_{c_5}^\infty \frac{\overline A(t)^{q/p}}{t^{q(1/r +1/r')}}\, \frac{dt}{t}
= c_6 \int_{c_5}^\infty \frac{\overline A(t)^{q/p}}{t^{q}}\, \frac{dt}{t}<\infty\,.
\]

This completes the proof.
\end{proof}

We will also rely on the following result of P\'erez,  Theorem 2.11 in \cite{PerezIUMJ} or
Theorem 3.5 in \cite{GCM}.
 Let $p, q$ with $1 < p \le q < \infty$, and $(w, v)$  a pair
of weights such that for every cube $Q$,
\begin{equation}
 |Q|^{1/q-1/p} \| w^{1/q}\|_{L^q(Q)}\, \| v^{-1/p}\|_{L^B(Q)} \le  c
 \end{equation}
where $B$ is a Young function with $\overline B\in B_p$.
Then, the Hardy-Littlewood maximal function $M$ maps $L^p_v(\R^n)$ continuously in $L^q_w(\R^n)$,  i.e.,
\begin{equation}
\| Mf\|_{L^q_w}\le c\,\|f\|_{L^p_v}\,.
\end{equation}

We are now ready to prove our result.

\begin{theorem} Let $T$ be a Calder\'{o}n-Zygmund singular integral operator
that satisfies the assumptions of Theorem 4.1  with $1\le r<\infty$ or  Theorem 4.3
with   the Young function $t^{r'}$  there   and $1\le r<\infty$.
Let $  r < p \le q<\infty$,   define $0\le \alpha<1$ by the relation $\alpha =1/p-1/q$,
and let $\alpha_1,\alpha_2\ge 0$ be such that $\alpha=\alpha_1+\alpha_2$.
Further, suppose that the Young functions $A$, $B$
are so that $\overline A \in  B_{(q/r)'}\cap  B_{q'}^{\alpha_2}$ and
$\overline{B} \in B_{p/r}^{\alpha_1 r}\,$,
and  $w,v$  weights such  that for all cubes $Q$,
\begin{equation}
  |Q|^{r/q-r/p} \, \|w^{r/p}\|_{L^A(Q)}\, \|v^{-r/p}\|_{L^B(Q)}\le c < \infty \,.
\end{equation}
Then,  if  the $\lambda_m$ as defined in Lemma 6.1 satisfy
\[\sum_{m=1}^{\infty} \lambda_m 2^{mn  /q} < \infty\,,\]
 we have
\begin{equation}
\Big(\int_{\R^n}   | Tf(x)   |^q\, w(x) \, dx\Big)^{1/q} \le
 c \,\Big(\int_{\R^n} |f(x)|^p\, v(x)\, dx\Big)^{1/p}
\end{equation}
for  those $f$ such that $\lim_{Q_0\to\R^n} m_{Tf}(t,Q_0)=0$.
\end{theorem}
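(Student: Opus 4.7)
The plan is to combine the refined local median decomposition of Theorem 6.3 applied to $Tf$ with the pointwise control of $m^{\sharp}_{Tf}$ provided by Lemma 6.1, and then to dualize against $L^{q'}_w$ and exploit the near-disjointness of the sets $F^v_j = Q^v_j \setminus \Omega^{v+1}$ together with the Orlicz-bump condition (6.16). Fix a large cube $Q_0$. Applying Theorem 6.3 to $Tf$, we have for a.e.\ $x \in Q_0$
\[
|Tf(x) - m_{Tf}(t,Q_0)| \le 8\, M^{\sharp}_{0,s}(Tf)(x) + \sum_{v,j} a^v_j \indicator{Q^v_j}(x),
\]
with $a^v_j \le c\, m^{\sharp}_{Tf}(1-s,\widehat{Q^v_j})$ for suitable $s$ (by the remark immediately following Theorem 6.3). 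Lemma 6.1 then yields
\[
a^v_j \le c \sum_{m=1}^{\infty} \lambda_m \Big(\frac{1}{|2^m \widehat{Q^v_j}|}\int_{2^m\widehat{Q^v_j}}|f|^r\Big)^{1/r},
\]
and, since $M^{\sharp}_{0,s}(Tf)(x) = \sup_{Q \ni x} m^{\sharp}_{Tf}(1-s,Q)$, the same type of bound (pointwise by $c\, M_r f(x)$) handles the sharp-maximal term.

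By Theorem 2.1(iv), the $F^v_j$ are pairwise disjoint with $|F^v_j| \ge c_{s,t}|Q^v_j|$. Dualizing in $L^q_w$, fix $g \ge 0$ with $\|g\|_{L^{q'}_w} = 1$; using $\frac{1}{|Q^v_j|}\int_{Q^v_j} gw \le \inf_{F^v_j} M(gw)$, the estimation of $\|\sum_{v,j} a^v_j \indicator{Q^v_j}\|_{L^q_w}$ reduces to bounding
\[
\sum_m \lambda_m \int_{Q_0}\sum_{v,j} \Big(\frac{1}{|2^m\widehat{Q^v_j}|}\int_{2^m\widehat{Q^v_j}}|f|^r\Big)^{1/r}\indicator{F^v_j}(x)\,M(gw)(x)\,dx.
\]
For each $m$, H\"older's inequality with the pair $(\overline B, B)$ applied to the inner average, combined with Proposition 6.2(i) and (iii), transforms the inner average into $\|fv^{1/p}\|_{L^D(2^m\widehat{Q^v_j})}\,\|v^{-r/p}\|_{L^B(2^m\widehat{Q^v_j})}^{1/r}$ with $D(t)=\overline B(t^r) \in B_p^{\alpha_1}$. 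A parallel H\"older step on the $g$-side using the pair $(A,\overline A)$ together with $\overline A \in B_{(q/r)'} \cap B_{q'}^{\alpha_2}$ and Proposition 6.2(ii),(iv) converts the pairing with $M(gw)$ into a fractional Orlicz maximal function of $gw^{1/q'}$. The bump condition (6.16) then cancels the $\|v^{-r/p}\|_{L^B}$ and $\|w^{r/p}\|_{L^A}$ factors at each $2^m\widehat{Q^v_j}$, leaving a dilation factor absorbed by $\sum_m \lambda_m 2^{mn/q} < \infty$. The resulting integrand is a product of the fractional Orlicz maximal functions $M_{\alpha_1,D}(fv^{1/p})$ and $M_{\alpha_2,E}(gw^{1/q'})$, with $E$ derived from $\overline A$; Theorem 3.3 of \cite{CUM} then gives $M_{\alpha_1,D}\colon L^p \to L^{q_1}$ with $1/q_1 = 1/p - \alpha_1$ and $M_{\alpha_2,E}\colon L^{q'}\to L^{q_2}$ with $1/q_2 = 1/q' - \alpha_2$. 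The relation $\alpha_1+\alpha_2=\alpha=1/p-1/q$ gives $1/q_1+1/q_2=1$, so final H\"older integration in $x$ bounds the whole expression by $c\,\|f\|_{L^p_v}$.

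With the $Q_0$-local estimate $\|(Tf - m_{Tf}(t,Q_0))\indicator{Q_0}\|_{L^q_w} \le c\,\|f\|_{L^p_v}$ thus established with $c$ independent of $Q_0$, passage $Q_0\to\R^n$ via Fatou's lemma together with the hypothesis $\lim_{Q_0\to\R^n} m_{Tf}(t,Q_0)=0$ yields (6.17). The main obstacle lies in the H\"older-with-bumps bookkeeping of the middle paragraph: one must carefully route the four hypotheses $\overline A \in B_{(q/r)'}$, $\overline A \in B^{\alpha_2}_{q'}$, $\overline B \in B^{\alpha_1 r}_{p/r}$, and (6.16) through the $r$-th-power and conjugate-Young-function identities of Proposition 6.2 to recover the correct fractional Orlicz maximal operators at the right scales, and to verify that the dilation factors arising from the enlargements $\widehat{Q^v_j} \mapsto 2^m\widehat{Q^v_j}$ are indeed compatible with the summability hypothesis $\sum_m \lambda_m 2^{mn/q}<\infty$.
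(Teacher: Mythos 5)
Your high-level route matches the paper's: use the refined local median decomposition (you wrote Theorem~6.3, but you mean Theorem~6.5, which gives the bound $a^v_j\le m_{|f-m_f(t,\widehat{Q^v_j})|}(1-(1-t)/2^n,\widehat{Q^v_j})$), control $m^\sharp_{Tf}$ by Lemma~6.1, dualize, split by H\"older against the Young pairs, invoke the bump condition, use the disjointness of $F^v_j=Q^v_j\setminus\Omega^{v+1}$, and finish with the $[\mathrm{CUM}]$ boundedness of the fractional Orlicz maximal operators followed by the conjugate-exponent H\"older $(1/s_1+1/s_2=1)$ and Fatou. The identification of $D(t)=\overline B(t^r)\in B^{\alpha_1}_p$ and $\overline C\in B^{\alpha_2}_{q'}$ via Proposition~6.2(iii),(iv) is also correct and matches the paper.

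There is, however, a genuine gap in the middle paragraph, and it is not just bookkeeping. Having dualized against $g\ge 0$ with $\|g\|_{L^{q'}_w}=1$, you immediately contract
\[
\frac{1}{|Q^v_j|}\int_{Q^v_j} g\,w \;\le\; \inf_{F^v_j} M(gw)
\]
and thereby reduce to an integral against the pointwise function $M(gw)(x)$. This is the wrong move here: once you have replaced the pairing $\int_{Q^v_j} g\,w$ by the Hardy--Littlewood maximal function $M(gw)$, the factors $g$ and $w$ are fused, and the ``parallel H\"older step on the $g$-side'' you then invoke has nothing to act on --- there is no longer an integral of a product to split with $(A,\overline A)$ on $2^m\widehat{Q^v_j}$, and there is no way to produce $\|w^{r/q}\|_{L^A}$ and $\|h\indicator{Q^v_j}\|_{L^{\overline C}}$ as separate factors. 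The bump condition (6.16) requires exactly those separate norms of $w^{r/q}$ and $v^{-r/p}$ over the same enlarged cube to cancel; $M(gw)$ cannot be uncoupled back into them. The device $\frac1{|Q|}\int_Q g w \le \inf_{F}M(gw)$ is the right tool for a Fujii/$Mw$-type estimate (as in Theorem~3.2), not for a two-weight bump theorem.

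What you should do instead is keep the pairing intact in the form $\int_{Q^v_j} w^{1/q}\,h\,dx$ with $h=g\,w^{1/q'}$ and $\|h\|_{L^{q'}}=1$, enlarge the cube via
\[
\int_{Q^v_j} w^{1/q} h\,dx \;=\; 2^{mn}|Q^v_j|\,\frac{1}{|2^mQ^v_j|}\int_{2^mQ^v_j} w^{1/q}\,h\,\indicator{Q^v_j}\,dx\,,
\]
apply H\"older there with the Young pair $(C,\overline C)$ where $C(t)=A(t^r)$, and obtain the two factors $\|w^{r/q}\|^{1/r}_{L^A(2^mQ^v_j)}$ and $\|h\indicator{Q^v_j}\|_{L^{\overline C}(2^mQ^v_j)}\le \|h\|_{L^{\overline C/2^{mn}}(Q^v_j)}$. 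Only after this, and after the corresponding split of the $f$-average with $(B,\overline B)$, do you insert (6.16) to cancel the weight norms at scale $2^mQ^v_j$, and then take infima over $F^v_j$ to produce $M_{\alpha_1,D}(fv^{1/p})$ and $M_{\alpha_2,\overline C/2^{mn}}h$. The disjointness of the $F^v_j$ is used at that point to convert the sum over $(v,j)$ into an integral over $Q_0$, and the dilation factor $2^{mn}$ combined with the $2^{-mn/q'}$ coming from the $[\mathrm{CUM}]$ operator norm for $M_{\alpha_2,\overline C/2^{mn}}$ leaves exactly $2^{mn/q}$, matching your summability hypothesis $\sum_m\lambda_m2^{mn/q}<\infty$. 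As written, your intermediate display with $M(gw)$ would block all of this.
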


\begin{proof}

We begin by considering the local version of (6.17).
Fix a cube $Q_0$ and note that by Theorem 6.5,
\begin{align*}
|Tf(x) - &m_{Tf}(t,Q_0)|
\\
&\le  8\,M^{\sharp}_{0,s,Q_0}(Tf)(x) +  c  \sum_{v,j} m^\sharp_{Tf}   (1-(1-t)/2^n,\widehat{Q^v_j}  )\indicator{Q^v_j}(x) \,,
\end{align*}
and therefore to estimate the $L^q_w(Q_0)$ norm of $Tf(x) - m_{Tf}(t,Q_0)$ it suffices
to estimate the norm of each summand separately.
Since by Theorem 4.1 or Theorem 4.3   we have
\[M^{\sharp}_{0,s,Q_0}  (Tf )(x)\le c\, M_r f(x)=c\, M   (|f|^r  )(x)^{1/r}\,,\]
the first term above can be estimated in norm by
\[\| M   (|f|^r )^{1/r} \|_{L^q_w}=\| M(|f|^r)\|_{L^{q/r}_w}^{1/r}\,.\]
Now, since   $\overline A\in B_{(q/r)'}$, by (6.13),
$ \|w^{r/q}\|_{L^{q/r}(Q)}\le c\,\|w^{r/q}\|_{L^A(Q)}$
for all cubes $Q$,
 and therefore (6.16) implies (6.14) with indices $p/r$ and $q/r$ there. Thus,
\[\| M(|f|^r)\|_{L^{q/r}_w}^{1/r}\le c\,\|\, |f|^r\|_{L^{p/r}_v}^{1/r}=c\, \| f\|_{L^{p}_v}\]
and
\[ \|M^{\sharp}_{0,s,Q_0}  (Tf )\|_{L^q_w}\le c\, \| f\|_{L^{p}_v}\,.
\]

Next, note that by a  geometric argument, if $Q$ is any of the cubes $Q^v_j$, there is a dimensional constant $c$
 such that
\begin{align}
\sum_{m=1}^{\infty} \lambda_m\Big( \frac{1}{|2^m \widehat{Q}|} \int_{2^m \widehat{Q}}
|f(y)|^r \, &dy\Big)^{1/r}
\notag
\\
&\le c \sum_{m=1}^{\infty}\lambda_m\,\Big(\frac{1}{|2^m Q|}\int_{2^m Q} |f(y)|^r\,dy\Big)^{1/r}.
\end{align}

To    estimate the norm of the sum by duality, let $h$ be such that ${\rm supp}(h) \subset Q_0$ and $\|h\|_{L^{q'}(Q_0)} = 1$,
and note that by (6.8) and (6.18),
\begin{align}\int_{Q_0}  \Big(  &\sum_{v,j}  m^\sharp_{Tf}  (1-(1-t)/2^n,\widehat{Q^v_j} ) \indicator{Q^v_j}(x) \Big )\, w (x)^{1/q} \, h(x)\,dx
\notag\\
&\le c\sum_m\lambda_m \sum_{v,j}\Big ( \frac{1}{|2^{m}Q_j^v|} \int_{2^m Q_j^v} |f(y) |^r \, dy \Big )^{1/r} \int_{Q_j^v} w (x)^{1/q}\,  h(x)\,dx\,.
\end{align}

We consider each term in the inner sum of (6.19) separately. First,
let $D$ be the Young function defined by $D(t)={\overline{B}}(t^r)$, and note
that  by H\"older's inequality for the conjugate Young functions $B,\overline B$
and  (6.12),
\begin{align*}
\Big ( \frac{1}{|2^{m}Q_j^v|} \int_{2^m Q_j^v} &|f(y) |^r \, dy \Big )^{1/r}
\\
&=
\Big ( \frac{1}{|2^{m}Q_j^v|} \int_{2^m Q_j^v} |f(y) |^r \,v(y)^{r/p} \,v(y)^{-r/p} \, dy \Big )^{1/r}\\
&\le 2\, \big(   \||f|^r v^{r/p} \|_{L^{\overline{B}}(2^mQ^v_j)} \, \| v^{-r/p} \|_{L^{B}(2^mQ^v_j)} \big)^{1/r} \,,
\\
&= 2\, \| f v^{1/p} \|_{L^{D}(2^mQ^v_j)} \,  \| v^{-r/p} \|_{L^{B}(2^mQ^v_j)}^{1/r}\,.
\end{align*}

Next, let $C$ be the Young function defined by $C(t)= A(t^r)$  and note
that   by H\"older's inequality for the conjugate Young functions $C,\overline C$
and  (6.12),
 \begin{align*}
 \int_{Q_j^v} w (x)^{1/q} \, h(x)\,dx &\le
 2^{mn} |Q_j^v|\, \frac{1}{|2^{m}Q_j^v|} \int_{2^m Q_j^v} w(x)^{1/q} \, h(x)\indicator{Q_j^v}(x)\,dx
\\
&\le 2\cdot 2^{mn}\,  \|w^{1/q}\|_{L^{C}(2^m Q^v_j)} \|h \indicator{Q^v_j}\|_{L^{\overline{C}}(2^m Q^v_j)}\,|Q_j^v|
\\
&\le 2\cdot 2^{mn}\, \|w^{r/q}\|_{L^{A}(2^m Q^v_j)}^{1/r} \|h \indicator{Q^v_j}\|_{L^{\overline{C}}(2^m Q^v_j)}\,|Q_j^v|\,.
\end{align*}

Moreover, since for each $\lambda>1$ and each cube $Q$ we have
\[  \|g \indicator{Q}  \|_{L^{\overline{C}}(\lambda  Q)} \le  \|g   \|_{L^{\overline{C} /\lambda^n}(Q)}\,,
\]
it follows that
\[ \int_{Q_j^v} w (x)^{1/q} \, h(x)\,dx \le 2\cdot
2^{mn} \, \|w^{r/q}\|_{L^{A}(2^m Q^v_j)}^{1/r}\, \|h \|_{L^{{\overline{C}/2^{mn}}}(  Q^v_j)}\, |Q_j^v|\,.\]

Therefore,
since by (6.16)   with $1/p-1/q=\alpha$,
\[ \|w^{r/q}\|_{L^{A}(2^m Q^v_j)}^{1/r}\,\| v^{-r/p} \|_{L^{B}(2^mQ^v_j)}^{1/r}\le c\,  |2^mQ^v_j|^{\alpha}\,,
\]
each term in the inner sum of (6.19) does not exceed
\[ c\,2^{mn} \, | 2^m Q^v_j|^{\alpha } \,\| f\,  v^{1/p} \|_{L^{D}(2^mQ^v_j)}
\,  \|h \|_{L^{{\overline{C}/2^{mn}}}(  Q^v_j)}\,|Q_j^v|\,,
\]
and consequently the sum itself is bounded  by
\begin{equation}
c\, \sum_{m=1}^\infty \lambda_m
 2^{mn} \, \sum_{v,j} | 2^m Q^v_j|^{\alpha}\, \|  f v^{1/p} \|_{L^{D}(2^m Q^v_j)}\,
 \|h    \|_{L^{\overline{C}/2^{mn}}(  Q^v_j)}\,  |Q_j^v|\,.
\end{equation}

 Let $F^v_j = Q^v_j \setminus \Omega^{v+1}$;  as in Theorem 3.2
it follows that the $F^v_j$ are pairwise disjoint and $|F^v_j| \ge  c |Q^v_j|$,
where $c$ depends on $s$ and $t$  but is independent of $v$ and $j$.
Then,  since $\alpha=\alpha_1+\alpha_2$, the innermost sum in (6.20) is bounded by
\[ J= c\, \sum_{v,j}   | 2^m Q^v_j|^{\alpha_1}\,
 \| f v^{1/p}  \|_{L^{D}(2^m  Q^v_j)}  \,  | 2^m Q^v_j|^{\alpha_2}\, \|h    \|_{L^{\overline{C}/2^{mn}}(  Q^v_j)} |F^v_j|\,,
\]
and, since
\[|2^m Q^v_j|^{\alpha_1}\, \|  f v^{1/p} \|_{L^{D}(2^m Q^v_j)}\le \inf_{x\in F_j^v}
M_{\alpha_1, D} (f v^{1/p})(x)\]
and similarly
\[ |2^m Q^v_j|^{\alpha_2}\,\|h    \|_{L^{\overline{C}/2^{mn}}(  Q^v_j)}\le
\inf_{x\in F_j^v} M_{\alpha_2,{\overline{C}/2^{mn}}}h(x)\,,
\]
we have that
\begin{align*}
   J &\le  c   \sum_{v,j} \int_{F^v_j}  M_{\alpha_1, D}  (f v^{1/p} )(x) \, M_{\alpha_2,\overline{C}/2^{mn}}\, h(x) \, dx
\\
&\le  c   \int_{Q_0}M_{\alpha_1,D}  (f v^{1/p} )(x) \, M_{\alpha_2,\overline{C}/2^{mn}}\,h(x) \, dx\,.
\end{align*}

Now pick $s_1,s_2$ such that
\begin{equation}
 1/p-\alpha_1 =1/s_1\,,\quad{\text{and}}\quad 1/q'-\alpha_2 =1/s_2\,.
\end{equation}
Since
\[ 1/s_1+1/s_2 =1/p-\alpha_1+ 1-1/q -\alpha_2 = 1/p-\alpha -1/q +1=1\,,
\]
$s_1,s_2$ are conjugate exponents, and, therefore, by H\"older's inequality,
\begin{align*}
\int_{Q_0}M_{\alpha_1,D}  (f v^{1/p} )(x)   &M_{\alpha_2,\overline{C}/2^{mn}}\, h(x) \, dx
\\
&\le
\| M_{\alpha_1,D}  (f v^{1/p} )\|_{L^{s_1}}  \| M_{\alpha_2,\overline{C}/2^{mn}}\,h\|_{L^{s_2}} .
\end{align*}

Now,  by (iii) and (iv) in
Proposition 6.1,  $D \in B_{p}^{\alpha_1} $ and  $\overline{C} \in B_{q'}^{\alpha_2}$, respectively,
and therefore by (6.11) and Theorem 3.3 in \cite{CUM},
\[ \int_{Q_0}M_{\alpha_1,D}  (f v^{1/p} )(x)   M_{\alpha_2,\overline{C}/2^{mn}}\,h(x) \, dx\le
 c\, \|f v^{1/p} \|_{L^p} \,  2^{-{mn/q'}}\, {\|h\|_{L^{q'}(Q_0)}}   \,,
 \]
and   the right-hand side of (6.20) is bounded by
\[ c \,\Big ( \sum_{m=1}^{\infty} \lambda_m 2^{mn(1 - 1/q')} \Big )   \|f  \|_{L^p_v} \le c\, \|f  \|_{L^p_v}\,.
\]
Hence,  combining the above estimates,
\[ \| Tf - m_{Tf}(t,Q_0) \|_{L^q_w(Q_0)}\le c \,\|f  \|_{L^p_v}\,.
\]

Finally,   by Fatou's lemma,
(6.17) follows for functions $f$ such that $m_{ Tf}(t,Q_0) \to 0$ as $Q_0 \to \R^n$.
\end{proof}

Observe that   $  B_{q'}\subset B_{(q/r)'}\cap  B_{q'}^{\alpha_2}$  and that, as the example
\[
\Phi(t) = \frac{t^{q' } } {\log(t)^{(1+\varepsilon)q' / s}} \,
\]
where $\varepsilon < q' /s - 1$ and $1 < q' \le  s < \infty$ is defined as $1/s = 1/q' - \alpha_2$, shows,
the inclusion is proper \cite{CUM}.
Now, in the case $p=q$, Theorem 6.6 builds on Theorem 1.3 in \cite{Lerner2010}, where
Lerner proves that for $n < p < \infty$, the full validity of a result anticipated by
 Cruz-Uribe and P\'erez \cite{CUP} holds
for singular integrals with $\omega(t)=t$; the sharpness of this result is discussed in \cite{CUMP, CUP}.
Theorem 6.6 in particular gives that if $r=1$ and $\omega(t)=t^\eta$ with $0<\eta<1$, then
the continuity holds for $ n/\eta<p<\infty$, and that in the case of kernels that satisfy a Dini condition,
whenever
\[\int_0^1 \omega_{r'}(c_nt) \frac1{t^{n/p}}\,\frac{dt}{t}<\infty\,,
\]
where $1\le r<\infty$.

\section{Morrey Spaces}

For a Young function $\Phi$ and  a continuous function $\phi(x,t)$ on $\R^n\times \R^+$
such that for each $x\in\R^n$,  $\phi(x,t) $ is increasing for $t$ in $[0,\infty)$ and $\phi(x,0)=0$, with $Q=Q(x_Q,l_Q)$, let
\[ \|f\|_{\mathcal M^{\Phi, \phi}_w}=\sup_{Q(x_Q,l_Q)\subset \R^n} \phi(x_Q, l_Q)\,\|f\|_{L^\Phi_w(Q)}\,.\]

Note that if $w =1$, $\Phi(t)=t^p$, and $\phi(x,t)=t^{n/p_0}$ for $1\le p\le p_0$, then $\mathcal M^{\Phi,\phi}$ $=\mathcal M^{p,p_0}$,
the familiar Morrey space.

As for the Campanato spaces $\mathcal L^{\Phi, \phi}_w$, consider the seminorms
\[ \|f\|_{\mathcal L^{\Phi, \phi}_w} =\sup_{Q(x_Q,l_Q)\subset \R^n} \phi(x_Q, l_Q)\,\|f-m_f(t,Q)\|_{L^\Phi_w(Q)}\,.\]

Although a priori the functions $\Phi$ and $\phi$ are unrelated, even in the simplest case
there are some limitations \cite{SawST}. In the unweighted case and when $\phi$ is independent of $x$,
in order that the characteristic function of the unit cube belongs to $\mathcal M^{\Phi,\phi}$  we assume that
\[ \sup_{t>1} \frac{\phi(t)}{\Phi^{-1}(t^n)}<\infty\,.\]

As pointed out in the introduction,  if $T$ is a Calder\'on-Zygmund singular integral operator that satisfies the
conditions of Theorem 4.1, for every $w\in A_\infty$ and Young function $\Phi$,
\[ \|Tf\|_{\mathcal L^{\Phi, \phi}_w}\le c\,\|M f\|_{\mathcal M^{\Phi, \phi}_w}\,.\]

The question is then  to remove the maximal function on the right-hand side of the above inequality.
Let $S$ be a  sublinear operator such that for a weight $w$,
\begin{equation} \int_{\R^n} \Phi(|Sf(y)|)\,w(y)\,dy\le c\int_{\R^n} \Phi(|f(y)|)\, w(y)\,dy\,,
\end{equation}
 and  for any cube $Q$, if $x\in Q$ and  ${\text{supp}}(f)\subset \R^n\setminus 2Q$, then
\begin{equation}
\ |Sf(x)|\le \int_{\R^n} \frac{|f(y)|}{|x-y|^n}\,dy\,.
\end{equation}
Such operators are considered, for instance, in \cite{GuliShuku}, and they include the maximal function as well as
a variety of singular integral operators..

Let then $\Phi$ be a Young function with $p=1/u_\phi$ and
$w\in A_p$. These weights satisfy condition $A_\Phi$,
namely, for every $\varepsilon>0$ and cube $Q$, with $\Phi(t)=\int_0^t a(s)\,ds$,
\begin{equation}
\Big(\frac1{|Q|}\int_Q \varepsilon w(y)\,dy\Big) a\Big(\frac1{|Q|}\int_Q a^{-1}\Big(\frac1{\varepsilon w(y)}\Big)\,dy\Big)\le c\,,
\end{equation}
which is equivalent to the integral inequality (7.1) for $S=M$, the Hardy-Littlewood maximal function \cite{KermanTorchinsky}.

Now, if $w\in A_\Phi$, by H\"older's inequality,
\begin{align} \frac1{|Q|}\,\int_{Q}|f(y)|\, dy
&= \varepsilon \, \frac{w(Q)}{|Q|}  \,\frac{1}{w(Q)}\,\int_{Q}|f(y)|\,\frac1{\varepsilon w(y)}\,w(y)\, dy\notag\\
&\le  2\, \varepsilon \, \frac{w(Q)}{|Q|} \,\| 1/ \varepsilon \,w\|_{L^{\overline{\Phi}}_w(Q)}\, \| f\|_{L^{\Phi}_w(Q)}\,.
\end{align}

We claim that for the choice $\varepsilon= \| 1/ w\|_{L^{\overline{\Phi}}_w(Q)}$,
\begin{equation}
 \varepsilon \, \frac{w(Q)}{|Q|} \le c\,,
\end{equation}
with a constant independent $c$ of $Q$.

Indeed, since $\overline \Phi(s)\sim s\, a^{-1}(s)$,  we have
\[1\sim \frac1{w(Q)}\int_Q\overline \Phi\Big(\frac1{\varepsilon w(y)}\Big)\,w(y)\,dy
\sim \frac1{w(Q)}\int_Q a^{-1}\Big(\frac1{\varepsilon w(y)}\Big)\frac1{\varepsilon}\,dy \,, \]
and therefore,
\[a\Big( \frac1{|Q|}\int_Q a^{-1}\Big(\frac1{\varepsilon w(y)}\Big)\,dy\Big)
\sim a\Big(\varepsilon\frac{w(Q)}{|Q|}\Big)\,,
\]
which, by  (7.3) gives
\begin{align*}
\Phi\Big(\varepsilon\frac{w(Q)}{|Q|}\Big)
&\sim a\Big(\varepsilon\frac{w(Q)}{|Q|}\Big) \varepsilon\frac{w(Q)}{|Q|}\\
&\sim \varepsilon\frac{w(Q)}{|Q|} a\Big( \frac1{|Q|}\int_Q a^{-1}\Big(\frac1{\varepsilon w(y)}\Big)\,dy\Big)\le c\,,
\end{align*}
and  (7.5) holds.

Thus, (7.4) gives
\begin{equation}
\frac1{|Q|}\,\int_{Q}|f(y)|\, dy\le c \,\| f\|_{L^\Phi_w(Q)}\,.
\end{equation}

We then have
\begin{theorem}
Let $S$ be a sublinear operator that satisfies {\rm (7.1)} and {\rm (7.2)}, $\Phi$ a Young function
so that $0<u_\Phi=1/p<1$, $w\in A_p$, and   $\phi(x,t), \psi(x,t)$ such that
for all $x\in\R^n$ and $\, l>0$,
\begin{equation}
{\psi(x, l)}\,\int_{ l}^\infty \frac1{\phi(x,t)}\, \frac{dt}{t}\le c\,.
\end{equation}
Then
\[ \|Sf\|_{\mathcal M^{\Phi, \psi}_w }\le c\,\|f\|_{\mathcal M^{\Phi, \phi}_w}\,.
\]
\end{theorem}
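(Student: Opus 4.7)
The plan is to fix an arbitrary cube $Q_0 = Q(x_0, l_0)$, split $f = f_1 + f_2$ with $f_1 = f\,\mathbf{1}_{2Q_0}$, estimate $\|Sf_1\|_{L^\Phi_w(Q_0)}$ using the modular inequality (7.1), and bound $|Sf_2|$ pointwise on $Q_0$ using (7.2) together with the $L^1$-Orlicz inequality (7.6) already proved in this section. In both steps the Dini-type condition (7.7) is what converts $\phi$-weights into $\psi$-weights at the right scale, so taking $\sup_{Q_0}\psi(x_0,l_0)\|Sf\|_{L^\Phi_w(Q_0)}$ will yield the conclusion.

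For the local piece, the modular inequality (7.1) combined with the doubling of $\Phi$ and of $w\in A_p\subset A_\infty$ gives $\|Sf_1\|_{L^\Phi_w(Q_0)}\le c\|f\|_{L^\Phi_w(2Q_0)}$, by the usual argument: applied to $Sf_1/\lambda$ with $\lambda=\|f\|_{L^\Phi_w(2Q_0)}$, (7.1) produces $\int_{Q_0}\Phi(|Sf_1|/\lambda)w\le c\,w(2Q_0)\le c'\,w(Q_0)$. Since $\phi(x_0,2l_0)\|f\|_{L^\Phi_w(2Q_0)}\le\|f\|_{\mathcal M^{\Phi,\phi}_w}$, it suffices to show $\psi(x_0,l_0)/\phi(x_0,2l_0)\le c$. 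But $\phi(x_0,\cdot)$ being increasing gives $\int_{l_0}^{2l_0}\frac{dt}{\phi(x_0,t)\,t}\ge \frac{\ln 2}{\phi(x_0,2l_0)}$, and by (7.7) the full integral from $l_0$ to $\infty$ is at most $c/\psi(x_0,l_0)$.

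For the non-local piece, (7.2) and annular decomposition give, for $x\in Q_0$,
\[
|Sf_2(x)|\le \sum_{k=1}^\infty\int_{2^{k+1}Q_0\setminus 2^kQ_0}\frac{|f(y)|}{|x-y|^n}\,dy\le c\sum_{k=1}^\infty\frac{1}{|2^{k+1}Q_0|}\int_{2^{k+1}Q_0}|f(y)|\,dy\,.
\]
Invoking (7.6) and then the definition of the Morrey norm at the cube $2^{k+1}Q_0$ (which is centered at $x_0$),
\[
|Sf_2(x)|\le c\sum_{k=1}^\infty \|f\|_{L^\Phi_w(2^{k+1}Q_0)}\le c\,\|f\|_{\mathcal M^{\Phi,\phi}_w}\sum_{k=1}^\infty\frac{1}{\phi(x_0,2^{k+1}l_0)}\,.
\]
Comparing the dyadic sum with the integral, $\sum_{k\ge 1}\frac{1}{\phi(x_0,2^{k+1}l_0)}\le \frac{1}{\ln 2}\int_{2l_0}^\infty\frac{dt}{\phi(x_0,t)\,t}\le \frac{c}{\psi(x_0,l_0)}$ by (7.7). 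Hence $|Sf_2(x)|\le c\,\|f\|_{\mathcal M^{\Phi,\phi}_w}/\psi(x_0,l_0)$ on $Q_0$, and since the $L^\Phi_w(Q_0)$ norm of a constant is comparable to that constant, we obtain $\psi(x_0,l_0)\|Sf_2\|_{L^\Phi_w(Q_0)}\le c\,\|f\|_{\mathcal M^{\Phi,\phi}_w}$.

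The main conceptual point—and the step I expect to be most delicate—is the bookkeeping that converts the pointwise/annular bound into the Orlicz-Morrey estimate at the proper scale via (7.7); in particular, the inequality (7.6), whose proof in the paper relies on $w\in A_p$ being equivalent to the $A_\Phi$ condition, is the bridge that lets one replace the $L^1$ averages arising from the kernel bound (7.2) by the $L^\Phi_w$ averages required by the Morrey norm. Once that bridge is in place, summing the two pieces and taking the supremum over $Q_0$ finishes the proof.
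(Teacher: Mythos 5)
Your proposal is correct and follows essentially the same route as the paper: split $f=f_1+f_2$ at the scale $2Q_0$, bound $Sf_1$ via the modular inequality (7.1) and doubling, bound $Sf_2$ pointwise through (7.2) and the bridge (7.6), and convert the $\phi$-scale to the $\psi$-scale with (7.7). The only (inessential) variations from the paper's argument are that you treat the annuli dyadically where the paper uses Fubini, and for $Sf_1$ you compare $\psi(x_0,l_0)$ with $\phi(x_0,2l_0)$ directly by a tail segment of (7.7) rather than routing through the integral bound in the paper's display (7.8).
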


\begin{proof}  Fix   a cube $Q=Q(x_Q,l_Q)$  of $\R^n$, and for a function $f\in \mathcal M^{\Phi, \phi}_w $,
let $f=f_1+f_2$  where $f_1=f\chi_{2Q}$ and $f_2=f-f_1$. Then
\[\|Sf\|_{L^\Phi_w(Q)}\le \|Sf_1\|_{L^\Phi_w(Q)}+\|Sf_2\|_{L^\Phi_w(Q)}\,.\]

Now, by (7.1),
\begin{align*}
\int_{Q} \Phi(|Sf_1(y)|)\,w(y)\,dy &\le c\int_{\R^n} \Phi(|f_1(y)|)\,w(y)\,dy\\
& = c\int_{2Q} \Phi(|f(y)|)\,w(y)\,dy\,,
\end{align*}
and so
\[ \frac1{w(Q)}\,\int_{Q} \Phi(|Sf_1(y)|)\,w(y)\,dy
\le c \, \frac1{w(2\,Q)} \int_{2Q} \Phi(|f(y)|)\, w(y)\,dy
\]
which readily gives
\begin{equation}
\|Sf_1\|_{L^\Phi_w(Q)}\le c\, \|f\|_{L^\Phi_w(2Q)} \le c\,
\int_{2l_Q}^\infty   \| f\|_{L^\Phi_w(Q(x_Q,t))}  \, \frac{dt}{t}\,.
\end{equation}

We next deal with the term with $f_2$. Note that
 for $x\in Q $ and $y\notin 2Q$, $|x-y|\sim |x_Q-y|$, and therefore
\[ |Sf_2(x)|\le c \int_{\R^n \setminus 2Q} \frac{|f(y)|}{|x_Q-y|^n}\, dy \,.
 \]

Now, by Fubini's theorem,
\begin{align*}
\int_{\R^n \setminus 2Q} \frac{|f(y)|}{|x_Q-y|^n}\, dy &\le c
\int_{\R^n \setminus 2Q} {|f(y)|}\int_{|x_Q-y|}^\infty\,\frac1{t^n} \frac{dt}{t}\,dy
\\
&\le c \int_{2l_Q}^\infty \int_{Q(x_Q,t )\setminus Q(x_Q,2 l_Q)}|f(y)|\, dy\, \frac1{t^n} \frac{dt}{t}
\\
&\le c \int_{2l_Q}^\infty \frac1{|Q(x_Q,t)|}\,\int_{Q(x_Q,t )}|f(y)|\, dy\,  \frac{dt}{t}\,,
\end{align*}
and consequently  by (7.6),
\[
 |Sf_2(x)| \le c   \,\int_{2l_Q}^\infty   \| f\|_{L^\Phi_w(Q(x_Q,t))}  \, \frac{dt}{t}\,.
 \]

Moreover, since for every $Q,\Phi,w$,
\[ \| g\|_{L^\Phi_w(Q)} \le c\,\| g\|_{L^\infty(Q)}\,,\]
it follows that
\[ \|Sf_2\|_{L^\Phi_w(Q)} \le c\, \int_{2l_Q}^\infty   \| f\|_{L^\Phi_w(Q(x_Q,t))}  \, \frac{dt}{t}\,, \]
which combined with (7.8)  gives
\[ \|Sf\|_{L^\Phi_w(Q)} \le c\, \int_{2l_Q}^\infty   \| f\|_{L^\Phi_w(Q(x_Q,t))}  \, \frac{dt}{t}\,.
\]
Therefore by  (7.7),
\begin{align}
 \|Sf\|_{L^\Phi_w(Q)} &\le c  \,\int_{2l_Q}^\infty \phi(x_Q,t)\, \| f\|_{L^\Phi_w(Q(x_Q,t))} \frac1{\phi(x_Q,t)}\, \frac{dt}{t}\notag\\
 &\le c\,\Big( \int_{2l_Q}^\infty \frac1{\phi(x_Q,t)}\, \frac{dt}{t}\Big)\, \|f\|_{\mathcal M^{\Phi, \phi}_w}\notag\\
 &\le c\,   \|f\|_{\mathcal M^{\Phi, \phi}_w}\, \frac1{\psi(x_Q,2l_{Q})}\,.
\end{align}

Now, since  $\psi$ is increasing, from (7.9) it follows that \[\psi(x_Q,l_{Q})\, \|Sf \|_{L^\Phi_w(Q)} \le c\, \|f\|_{\mathcal M^{\Phi, \phi}_w}\]
and so, taking the supremum over $Q$,
\[ \|Sf\|_{\mathcal M^{\Phi, \psi}_w} \le  c\,  \|f\|_{\mathcal M^{\Phi, \phi}_w}\,.\]
The proof is thus complete.
\end{proof}

\end{document}